\documentclass[11pt]{article}

\usepackage{amsfonts,amsmath,amsthm,amssymb}
\usepackage{bm}
\usepackage[colorlinks, citecolor=blue, urlcolor=blue, backref=page]{hyperref}
\usepackage[round]{natbib}
\usepackage[margin=1.2in]{geometry}
\usepackage[utf8]{inputenc}
\usepackage[T1]{fontenc}    

\usepackage{multirow}
\usepackage{pdflscape}
\usepackage{tabularx}

\usepackage{enumitem}

\newtheorem{theorem}{Theorem}[section]
\newtheorem{corollary}[theorem]{Corollary}
\newtheorem{lemma}[theorem]{Lemma}
\newtheorem{proposition}[theorem]{Proposition}

\newtheorem{example}[theorem]{Example}
\theoremstyle{definition}
\newtheorem{definition}[theorem]{Definition}

\newtheorem{remark}[theorem]{\textbf{Remark}}
\numberwithin{equation}{section}


\newcommand{\E}{{\mathbb E}}

\renewcommand{\P}{{\mathsf P}}
\newcommand{\Q}{{\mathsf Q}}
\newcommand{\R}{{\mathbb R}}

\newcommand{\N}{{\mathbb N}}

\newcommand{\Ecal}{{\mathcal E}}
\newcommand{\Fcal}{{\mathcal F}}

\newcommand{\Ncal}{{\mathcal N}}

\newcommand{\Pcal}{{\mathcal P}}

\newcommand{\Zcal}{{\mathcal Z}}

\newcommand{\Peff}{{\mathcal{P}_\textnormal{eff}}}

\DeclareMathOperator{\conv}{conv}

\begin{document}

\title{The numeraire e-variable and reverse information projection}
\author{
Martin Larsson\footnote{Department of Mathematical Sciences, Carnegie Mellon University;\texttt{larsson@cmu.edu}} \and
Aaditya Ramdas\footnote{Departments of Statistics and ML, Carnegie Mellon University; \texttt{aramdas@cmu.edu}} \and
Johannes Ruf\footnote{Department of Mathematics, London School of Economics; School of Data Science, Chinese University Hong Kong Shenzhen \texttt{j.ruf@lse.ac.uk}} \\[2ex]
}
\maketitle

\begin{abstract}
We consider testing a composite null hypothesis $\mathcal{P}$ against a point alternative $\mathsf{Q}$ using e-variables, which are nonnegative random variables $X$ such that $\mathbb{E}_\mathsf{P}[X] \leq 1$ for every $\mathsf{P} \in \mathcal{P}$. This paper establishes a fundamental result: under no conditions whatsoever on $\mathcal{P}$ or $\mathsf{Q}$, there exists a special e-variable $X^*$ that we call the numeraire, which is strictly positive and satisfies $\mathbb{E}_\mathsf{Q}[X/X^*] \leq 1$ for every other e-variable $X$. In particular, $X^*$ is log-optimal in the sense that $\mathbb{E}_\mathsf{Q}[\log(X/X^*)] \leq 0$. Moreover, $X^*$ identifies a particular sub-probability measure $\mathsf{P}^*$ via the density $d \mathsf{P}^*/d \mathsf{Q} = 1/X^*$. As a result, $X^*$ can be seen as a generalized likelihood ratio of $\mathsf{Q}$ against $\mathcal{P}$. We show that $\mathsf{P}^*$ coincides with the reverse information projection (RIPr) when additional assumptions are made that are required for the latter to exist. Thus $\mathsf{P}^*$ is a natural definition of the RIPr in the absence of any assumptions on $\mathcal{P}$ or $\mathsf{Q}$. In addition to the abstract theory, we provide several tools for finding the numeraire and RIPr in concrete cases. We discuss several nonparametric examples where we can indeed identify the numeraire and RIPr, despite not having a reference measure. Our results have interpretations outside of testing in that they yield the optimal Kelly bet against $\mathcal{P}$ if we believe reality follows $\mathsf{Q}$. We end with a more general optimality theory that goes beyond the ubiquitous logarithmic utility. We focus on certain power utilities, leading to reverse Rényi projections in place of the RIPr, which also always exist.
\end{abstract}

\section{Introduction}

Suppose we observe data from an unknown distribution on a measurable space $(\Omega,\Fcal)$. This paper is concerned with testing the composite null hypothesis $H_0$ that the data distribution belongs to a given set $\Pcal$ of distributions on this space, against the point alternative $H_1$ that the data was generated by a given distribution $\Q$. Here $\Pcal$ and $\Q$ are completely arbitrary, and our general theory will not require any further assumptions whatsoever.

We approach the testing problem by designing \emph{e-variables}: a $[0,\infty]$-valued  random variable $X$ is called an e-variable for $\Pcal$ if $\E_\P[X] \leq 1$ for all $\P \in \Pcal$. Specifically, we will look for `optimal' e-variables, in senses to be formalized later, that tend to be large under $\Q$.

E-variables measure evidence against the null: the larger its realized value, the stronger the evidence. One can also use e-variables to make hard accept/reject decisions. In particular, Markov's inequality implies that we can reject $H_0$ at level $\alpha$ if an e-variable $X$ exceeds $1/\alpha$, since $\P(X \geq 1/\alpha)\leq \alpha$ for each $\P \in \Pcal$. This paper will not be concerned with such hard decisions, but just in the measure of evidence $X$ itself, which requires no pre-specification of any level $\alpha$.

E-variables have been employed 
since at least~\cite{robbins1970statistical}. More recently, in 2018--2020, a number of papers appeared considering new applications and constructions of e-variables \citep{wasserman2020universal,howard2020time,howard2021time,shafer2021testing,vovk2021values,waudby2020estimating,grunwald2020safe}, but all using different (or no) terminology. In 2020, these authors agreed to use the term `e-variable' (and `e-value' for its realization). See~\cite{ramdas2023game} for a recent survey.

Our main result establishes that \emph{under absolutely no restrictions or conditions on $\Pcal$ or $\Q$}, there exists a special $\Q$-almost surely unique and strictly positive e-variable $X^*$ that we call the \emph{numeraire e-variable}, or just the \emph{numeraire}. It is characterized by the property that $\E_\Q[X/X^*] \le 1$  for every other e-variable $X$. This in turn implies two other interpretable properties $\E_\Q[X^*/X] \geq 1$ and $\E_\Q[\log (X/X^*)] \leq 0$, the second of which is log-optimality. In fact, $X^*$ is the numeraire if and only if it is log-optimal in this sense. The terminology derives from mathematical finance, where the \emph{numeraire portfolio} is a central object analogous to the numeraire e-variable; see e.g.\ \cite{LONG199029,MR1849424,Platen:2006, MR2335830, KK_book}. In the standard Neyman-Pearson framework with a composite null and alternative, already \cite{Cvitanic:Karatzas:2001} observed how techniques from financial mathematics can be transferred to obtain optimal statistical tests.

Although no assumptions on $\Pcal$ or $\Q$ are needed in general, the theory simplifies if the alternative $\Q$ does not assign positive probability to any event that is null under every $\P \in \Pcal$. In this case we say that \emph{$\Q$ is absolutely continuous with respect to $\Pcal$}, written $\Q \ll \Pcal$. This natural generalization of absolute continuity for pairs of measures goes back to \cite{halmos1949application} and is satisfied in most statistically relevant settings, while being significantly weaker than requiring a common dominating reference measure. 
\cite{Huber:Strassen:1973} provide early insights on statistical testing in the absence of a reference measure.

The condition $\Q \ll \Pcal$ has several consequences: (a) the numeraire is $\Q$-almost surely finite; (b) every e-variable is $\Q$-almost surely finite; and (c) the set of all e-variables for $\Pcal$ is bounded in probability under $\Q$, meaning that $\lim_{a \to \infty} \sup_X \Q(X > a) = 0$ where the supremum extends over all e-variables for $\Pcal$. In fact, we show that the above properties are equivalent in that they imply each other, and they also imply that $\Q \ll \Pcal$.

Next, we associate to the numeraire a special sub-probability measure $\P^* \ll \Q$ defined via its density $d\P^* / d\Q = 1/X^*$. This measure belongs to the \emph{effective null hypothesis} associated with $\Pcal$, denoted by $\Peff$. The effective null is the largest family of sub-probability measures that has the same set of e-variables as $\Pcal$. These are the distributions against which e-variables for $\Pcal$ do not provide any evidence.

We show that $\P^*$ can be characterized in terms of properties normally associated with the \emph{reverse information projection} (RIPr), first used by \cite{csiszar1984information}. Earlier work on the RIPr involve assumptions related to absolute continuity and/or finiteness of the minimum relative entropy between $\Q$ and $\Pcal$, and in such settings the above $\P^*$ coincides with the RIPr. This includes, in particular, work by~\cite{grunwald2020safe} and \cite{harremoes2023universal}, which in turn build on results in \cite{li1999estimation}. This leads us to argue that $\P^*$ is the natural definition of the RIPr in the absence of any assumptions on $\Pcal$ or $\Q$.

One of our main contributions is to significantly relax the conditions required for a log-optimality theory of e-variables to be built, and for a RIPr to exist. Furthermore, in addition to the abstract theory, we develop tools to aid in the computation of the numeraire and the RIPr in concrete examples. We establish several sufficient conditions to verify that an e-variable at hand is indeed the numeraire, and we provide several examples of nonparametric nulls $\Pcal$ (such as bounded distributions, symmetric distributions and sub-Gaussian distributions) that illustrate how these sufficient conditions can be used. These examples are out of the reach of previous works due to their lack of a reference measure on which the aforementioned works centrally rely.

An important message of our work is that in concrete examples, the duality between the set of e-variables and the null hypothesis can be exploited in different ways depending on the nature of the null. For parametric nulls, the simplest approach is usually to first find the RIPr (and then the numeraire). For non-parametric nulls, it is often easier to first find the numeraire (and then the RIPr).

We end the paper with a more general optimality theory that goes beyond the ubiquitous logarithmic utility, by studying other concave utilities and associated divergences. Without aiming for full generality, we focus on certain power utilities in place of the logarithm, which leads to \emph{reverse Rényi projections} in place of the RIPr. These developments, again, take place under absolutely no conditions on $\Pcal$ or $\Q$.

Finally, we note that if $\Pcal=\{\P\}$ is a singleton and $\Q \ll \P$, we obtain as a consequence that the numeraire is the likelihood ratio $d \Q/d \P$, which is well known to be log-optimal \citep{shafer2021testing}. Thus, our work can be  viewed as deriving certain generalized likelihood ratios for composite null hypotheses, and extending the corresponding optimality theory.

\paragraph{Paper outline.} Section~\ref{S_numeraire} defines the numeraire, establishes existence and uniqueness under no assumptions, characterizes the absolute continuity property $\Q \ll \Pcal$, and establishes various other useful facts such as log-optimality of the numeraire. Section~\ref{S_duality} introduces the RIPr and characterizes it via various optimality properties dual to the properties and log-optimality of the numeraire. Section~\ref{S_verification} establishes a number of tools for finding the numeraire and the RIPr in practice. These tools are then applied to specific examples in Section~\ref{S_examples}. Section~\ref{S_renyi} goes beyond log-optimality by considering certain power utilities and the associated reverse R\'enyi projections. Section~\ref{S_comp_alt_univ_inf} shows how our results imply that the method of universal inference is inadmissible. Section~\ref{S_summary} concludes. Appendix~\ref{app_gen_lebesgue} contains a generalized Lebesgue decomposition that we rely on in certain proofs, and Appendix~\ref{app_finiteness} gives an overview of the relation between absolute continuity properties and finiteness of the numeraire and relative entropies.

\paragraph{Notation.} Given the measurable space $(\Omega,\Fcal)$, we let $F_+$, $M_+$, and $M_1$ denote the set of all $[0,\infty]$-valued measurable functions, nonnegative measures, and probability measures on $\Fcal$, respectively. Given a null hypothesis $\Pcal$, which is an arbitrary nonempty subset of $M_1$, we write $\Ecal$ for the associated set of e-variables, i.e., the family of all $X \in F_+$ that satisfy $\E_\P[X] \leq 1$ for all $\P \in \Pcal$.
Infinite quantities play a key role in this paper. 
We generally rely on standard measure theoretic conventions such as $0 \cdot x = 0$ for $x \in [-\infty,\infty]$ and $x \cdot \infty = \infty$ for $x \in (0,\infty]$, as well as $\log(0) = -\infty$ and $\log(\infty) = \infty$. When evaluating ratios of $[0,\infty]$-valued random variables we often use the following convention, the last part of which is somewhat unusual but simplifies notation:
\begin{equation} \label{eq_arithmetic}
\text{$x/\infty = 0$ and $\infty/x = \infty$ for $x \in [0,\infty)$, and $\infty/\infty = 1$.}
\end{equation}
Clarifying explanations are given in potentially ambiguous cases. Finally, we write $\conv(\cdots)$ for convex hull, that is, the set of (finite) convex combinations of the indicated objects.

\section{The numeraire e-variable} \label{S_numeraire}

Here is the key definition of this paper. The convention~\eqref{eq_arithmetic} is used to evaluate the ratio inside the expectation.

\begin{definition} \label{D_numeraire}
A \emph{numeraire e-variable} (or just \emph{numeraire} for short) is a $\Q$-almost surely strictly positive e-variable $X^*$ such that $\E_\Q[X/X^*] \le 1$ for every e-variable $X$.
\end{definition}

Our choice of terminology is inspired by the \emph{numeraire portfolio} in the mathematical finance literature; see e.g.\ \cite{LONG199029,MR1849424, Platen:2006, MR2335830, KK_book}. In that literature, the defining property of the numeraire portfolio is that any positive wealth process divided by the numeraire becomes a supermartingale. In view of the gambling interpretation of e-variables \citep{shafer2021testing}, this is precisely what is expressed by Definition~\ref{D_numeraire}. A subtle distinction is that we allow the numeraire to take the value infinity, whereas this is excluded in the finance literature.

Numeraires are unique up to $\Q$-nullsets.  Indeed, if $X^*_1$ and $X^*_2$ are numeraires, then the ratio $Y = X^*_2/X^*_1$ satisfies $1 \le 1/\E_\Q[Y] \le \E_\Q[1/Y] \le 1$ thanks to the numeraire property of $X^*_1$, Jensen's inequality, and the numeraire property of $X^*_2$. Thus Jensen's inequality holds with equality, so $Y$ is $\Q$-almost surely equal to a constant which must be one. It follows that $X^*_1$ and $X^*_2$ are $\Q$-almost surely equal. In view of this uniqueness, we often speak of `the' numeraire.

\begin{example} \label{Ex:240225}
In the case of a simple null $\Pcal = \{\P_0\}$ with $\Q$ absolutely continuous with respect to $\P_0$, the numeraire is just the likelihood ratio $X^* = d\Q/d\P_0$. Indeed, $X^*$ is $\Q$-almost surely strictly positive, it is an e-variable, and for any other e-variable $X$ we have $\E_\Q[X/X^*] = \E_{\P_0}[X] \le 1$. In particular settings, such as when a reference measure exists and certain additional assumptions are satisfied, we will show that the numeraire is a likelihood ratio involving the `reverse information projection'  \citep{grunwald2020safe,harremoes2023universal}. Even without these restrictions, $X^*$ can still be interpreted as a likelihood ratio. One of our goals is to establish this in full generality. This is the subject of Section~\ref{S_duality}. 
\end{example}

The following general property of the numeraire can sometimes be useful.

\begin{lemma}
\label{lem:lifting}
Let $X^*$ denote the numeraire for $\Pcal$, and consider a larger null hypothesis $\Pcal' \supset \Pcal$. If $X^*$ is still an e-variable for $\Pcal'$, then it is also the numeraire for $\Pcal'$.
\end{lemma}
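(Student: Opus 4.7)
The plan is to exploit the obvious monotonicity of the e-variable set with respect to the null hypothesis, together with the fact that the numeraire property is a supremum-like condition over that set.

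First I would let $\Ecal$ and $\Ecal'$ denote the sets of e-variables for $\Pcal$ and $\Pcal'$ respectively, and observe that $\Ecal' \subseteq \Ecal$. This is immediate from the definition: if $X \in F_+$ satisfies $\E_\P[X] \le 1$ for every $\P \in \Pcal'$, then in particular $\E_\P[X] \le 1$ for every $\P \in \Pcal$ since $\Pcal \subseteq \Pcal'$, so $X \in \Ecal$.

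Next I would verify the two defining properties of the numeraire for $\Pcal'$ directly. Strict positivity $\Q$-a.s.\ is inherited from $X^*$ being the numeraire for $\Pcal$. Membership of $X^*$ in $\Ecal'$ is given by hypothesis. Finally, for the supermartingale-type inequality, any $X \in \Ecal'$ lies in $\Ecal$ by the inclusion above, so the numeraire property of $X^*$ with respect to $\Pcal$ yields $\E_\Q[X/X^*] \le 1$. Hence $X^*$ satisfies Definition~\ref{D_numeraire} for $\Pcal'$ and, by the uniqueness discussion following that definition, is \emph{the} numeraire for $\Pcal'$.

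There is no real obstacle here; the lemma is essentially a tautology once one notes that enlarging the null shrinks the e-variable class while preserving the ambient $\Q$-a.s.\ inequality. The only thing to be careful about is not to confuse the direction of the inclusion: it is $\Ecal' \subseteq \Ecal$, not the other way around, and the hypothesis $X^* \in \Ecal'$ is genuinely needed because in general $X^*$ need not remain an e-variable when $\Pcal$ is enlarged.
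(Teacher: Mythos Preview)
Your proof is correct and follows exactly the same route as the paper: any e-variable for $\Pcal'$ is an e-variable for $\Pcal$, so the numeraire inequality carries over. The paper's proof is simply a one-line version of what you wrote.
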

\begin{proof}
If $X$ is an e-variable for $\Pcal'$, it is also an e-variable for $\Pcal$ and one has $\E_\Q[X/X^*] \leq 1$ by assumption. This proves the lemma.
\end{proof}

A numeraire $X^*$ is also \emph{log-optimal} in the sense that
\begin{equation} \label{eq_log_optimality}
\text{$\E_\Q\left[ \log \frac{X}{X^*} \right] \le 0$ for every e-variable $X$,}
\end{equation}
where the left-hand side may be $-\infty$. This follows directly from Jensen's inequality and the numeraire property. The converse is also true, and we record the equivalence in the following proposition; see also \citet[Theorem~16.2.2]{MR2239987}. The proof shows that the numeraire property is really the first-order condition for log-optimality. Note also that a numeraire $X^*$ is the $\Q$-almost surely unique log-optimal e-variable in the sense of \eqref{eq_log_optimality} even if $\E_\Q[\log X^*]$ happens to be infinite.

\begin{proposition} \label{P_log_optimality}
Let $X^*$ be a $\Q$-almost surely strictly positive e-variable. Then $X^*$ is a numeraire if and only if it is log-optimal. In particular, a log-optimal e-variable is unique up to $\Q$-nullsets.
\end{proposition}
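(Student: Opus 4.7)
The forward implication (numeraire $\Rightarrow$ log-optimal) has already been established by Jensen's inequality applied to the numeraire property, as noted in the text immediately preceding the proposition.

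For the converse, I plan a variational / first-order argument. Fix an arbitrary e-variable $X$ and set $Z = X/X^*$; the goal is to show $\E_\Q[Z] \le 1$. Since $\Ecal$ is convex, $Y_\epsilon := (1-\epsilon) X^* + \epsilon X \in \Ecal$ for every $\epsilon \in [0,1]$, and log-optimality of $X^*$ gives
\[
\phi(\epsilon) := \E_\Q\bigl[\log(Y_\epsilon/X^*)\bigr] = \E_\Q\bigl[\log(1 + \epsilon(Z-1))\bigr] \le 0 = \phi(0).
\]
Differentiating formally at $\epsilon = 0^+$ yields the desired $\E_\Q[Z-1] \le 0$, i.e.\ the numeraire inequality. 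So this is the natural first-order condition dual to log-optimality.

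The main obstacle is justifying the interchange of derivative and expectation, since $Z$ need not be integrable and $\log(1 + \epsilon(Z-1))$ can diverge to $-\infty$ on $\{Z = 0\}$ as $\epsilon \to 1$. I would bypass this by truncation: for each integer $n \ge 1$, the random variable $X^{(n)} := X \wedge (n X^*)$ is again an e-variable (since $X^{(n)} \le X$ and thus $\E_\P[X^{(n)}] \le \E_\P[X] \le 1$ for all $\P \in \Pcal$), and $Z^{(n)} := X^{(n)}/X^* = Z \wedge n$ takes values in $[0,n]$. Applying the variational setup to $X^{(n)}$, the function $\phi_n(\epsilon) := \E_\Q[\log(1 + \epsilon(Z^{(n)} - 1))]$ is continuously differentiable on $[0, 1/2]$ by dominated convergence, since the integrand and its $\epsilon$-derivative $(Z^{(n)} - 1)/(1 + \epsilon(Z^{(n)}-1))$ are both bounded uniformly in $\omega$ on $\epsilon \in [0,1/2]$ (the denominator is at least $1/2$). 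Since $\phi_n(0) = 0$ and $\phi_n \le 0$ on $[0,1/2]$, one concludes $\phi_n'(0^+) = \E_\Q[Z^{(n)}] - 1 \le 0$. A final application of monotone convergence, using $Z^{(n)} \uparrow Z$, gives $\E_\Q[Z] \le 1$, as desired.

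The uniqueness assertion is then immediate: since log-optimality is equivalent to the numeraire property, and the numeraire has already been shown in the text to be $\Q$-almost surely unique, so is any log-optimal e-variable.
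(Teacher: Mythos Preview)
Your proof is correct and follows the same variational strategy as the paper: form the convex combinations $(1-\epsilon)X^* + \epsilon X$, use log-optimality to get $\E_\Q[\log(1+\epsilon(Z-1))]\le 0$, and extract the first-order condition $\E_\Q[Z]\le 1$ at $\epsilon=0^+$. The only difference is in how the passage to the limit is justified. The paper observes directly that $t^{-1}\log(1-t+tZ)$ is bounded below by $t^{-1}\log(1-t)\ge -2\log 2$ for $t\in(0,1/2)$ and converges pointwise to $Z-1$, so a single application of Fatou's lemma yields $\E_\Q[Z-1]\le 0$. Your route instead truncates $Z$ to $Z\wedge n$, uses dominated convergence to differentiate under the integral, and then applies monotone convergence in $n$. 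Both are valid; the paper's Fatou argument is marginally shorter, while your truncation makes the differentiation step completely explicit and avoids spotting the uniform lower bound.
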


\begin{proof}
The forward direction was argued above. To prove the converse we assume $X^*$ is log-optimal. For any e-variable $X$ and $t \in (0,1)$, $X(t) = (1-t) X^* + t X$ is an e-variable. Thus by log-optimality, $\E_\Q[t^{-1}\log(X(t)/X^*)] \le 0$. The expression inside the expectation equals $t^{-1} \log(1 - t + t X/X^*)$,
which converges to $X/X^* - 1$ as $t$ tends to zero and is bounded below by $t^{-1}\log(1-t)$, hence by $-2 \log 2$ for $t \in (0, 1/2)$. Fatou's lemma thus yields $\E_\Q[X/X^* - 1] \le 0$, showing that $X^*$ is a numeraire. Finally, the uniqueness statement follows from the equivalence just proved together with uniqueness of the numeraire up to $\Q$-nullsets.
\end{proof}

\begin{remark} \label{R_benchmark_numeraire}
In some situations, for instance if $\log X^*$ has infinite $\Q$-expectation, it may be of interest measure evidence relative to a \emph{benchmark}. By a benchmark we simply mean any $\Q$-almost surely strictly positive random variable $X_\text{b}$ such that $\log(X^*/X_\text{b})$ has finite $\Q$-expectation. Note that the numeraire itself is a possible benchmark. For any $X \in \Ecal$, the log-optimality property \eqref{eq_log_optimality} of $X^*$ yields 
\[
\E_\Q\left[\log \frac{X}{X_\text{b}}\right] = \E_\Q\left[\log \frac{X}{X^*}\right] + \E_\Q\left[\log \frac{X^*}{X_\text{b}}\right] \le \E_\Q\left[\log \frac{X^*}{X_\text{b}}\right].
\]
Thus the numeraire maximizes the `benchmarked' quantity $\E_\Q[\log(X/X_\text{b})]$ over $\Ecal$, and the optimal value is finite. This is true regardless of which benchmark is used, so the numeraire is \emph{benchmark-invariant}. We thank an anonymous referee for pointing this out.
\end{remark}

The following theorem shows that a numeraire always exists without any assumptions whatsoever on $\Pcal$ or $\Q$. The theorem is related to an important family of results in the mathematical finance literature, known collectively as the Fundamental Theorem of Asset Pricing, which are versions of the statement that a numeraire portfolio with finite wealth exists if and only if unbounded profits must come with unbounded downside risk. The following theorem contains the corresponding statement in the context of e-variables.

\begin{theorem} \label{T_numeraire}
A numeraire exists. Moreover, the following conditions are equivalent:
\begin{enumerate}
\item\label{T_numeraire_1} The numeraire is $\Q$-almost surely finite.
\item\label{T_numeraire_2} Every e-variable is $\Q$-almost surely finite.
\item\label{T_numeraire_3} $\Q$ is absolutely continuous with respect to $\Pcal$.
\item\label{T_numeraire_4} The set of e-variables $\Ecal$ is bounded in probability under $\Q$.
\end{enumerate}
\end{theorem}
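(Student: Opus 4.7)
The plan is to first establish existence of the numeraire without any assumption, then derive the equivalences (i)--(iv). Existence will proceed in two stages: a reduction to the case $\Q \ll \Pcal$, and then a log-optimization argument combined with a Koml\'os-type compactness lemma. Proposition~\ref{P_log_optimality} will then convert log-optimality into the numeraire property.

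For the reduction, I would let $p^* := \sup_{X \in \Ecal} \Q(X = \infty)$, pick $X_k \in \Ecal$ with $\Q(X_k = \infty) \to p^*$, and form $X^\infty := \sum_k 2^{-k} X_k \in \Ecal$. Writing $A^c := \{X^\infty = \infty\}$ (so $\Q(A^c) = p^*$), three facts then follow from short convex-combination arguments: (a) every $X \in \Ecal$ satisfies $\{X=\infty\} \subseteq A^c$ $\Q$-a.s.\ (else $(X+X^\infty)/2$ would violate the maximality of $p^*$); (b) any $\Pcal$-null $N \subseteq A$ is $\Q$-null (apply the same idea to $X^\infty + \infty\cdot\mathbf{1}_N$); and (c) every $\P \in \Pcal$ has $\P(A^c) = 0$ because $\E_\P[X^\infty] \leq 1 < \infty$. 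Setting $X^* \equiv +\infty$ on $A^c$---consistent with convention~\eqref{eq_arithmetic}---reduces everything to constructing $X^*$ on $A$, where I may assume $\Q \ll \Pcal$ and every e-variable is $\Q$-a.s.\ finite. Under this assumption, Halmos--Savage supplies $(\P_n) \subseteq \Pcal$ with $\Q \ll \bar{\P} := \sum_n 2^{-n}\P_n$, and a truncation of $Z := d\Q/d\bar{\P}$ yields $\sup_{X \in \Ecal} \Q(X > a) \leq c/a + \E_{\bar{\P}}[Z\mathbf{1}_{Z>c}]$ for every $c>0$, which establishes (iii)$\Rightarrow$(iv) as a byproduct.

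With boundedness in probability in hand, I would carry out a log-maximization. On each truncated class $\Ecal_n := \{X \in \Ecal : X \leq n\}$, the supremum $v_n := \sup_{X \in \Ecal_n} \E_\Q[\log X]$ lies in $[0, \log n]$ and is attained via a Koml\'os--Delbaen--Schachermayer extraction of convex combinations of a maximizing sequence, with a pointwise-$\liminf$ replacement and Fatou's lemma under each $\P \in \Pcal$ ensuring the limit $X^*_n$ lies in $\Ecal$. A further Koml\'os extraction on $(X^*_n)_n$ as $n \to \infty$, together with a perturbation $(1-\varepsilon)X^* + \varepsilon$ to ensure $\Q$-a.s.\ strict positivity, produces the candidate $X^* \in \Ecal$.

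I expect the main obstacle to be upgrading truncated log-optimality to the \emph{global} numeraire property $\E_\Q[X/X^*] \leq 1$ for every $X \in \Ecal$, including unbounded ones. For this I would reuse the perturbation argument from the proof of Proposition~\ref{P_log_optimality}: for $X \in \Ecal$ and $t \in (0,1)$, $(1-t)X^* + tX$ is again in $\Ecal$, and the lower bound $t^{-1}\log(1 - t + tX/X^*) \geq t^{-1}\log(1-t)$ allows Fatou's lemma to transfer the limiting inequality to $t \downarrow 0$, directly yielding $\E_\Q[X/X^*] \leq 1$. For the remaining equivalences, (iv)$\Rightarrow$(ii)$\Rightarrow$(i) are immediate. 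For (i)$\Rightarrow$(iii), if $N \in \Fcal$ is $\Pcal$-null then $X^* + c\mathbf{1}_N \in \Ecal$ for all $c > 0$; the numeraire inequality gives $c\,\E_\Q[\mathbf{1}_N/X^*] \leq 0$, and combined with $1/X^* > 0$ $\Q$-a.s.\ (from $X^* < \infty$ $\Q$-a.s.), this forces $\Q(N) = 0$.
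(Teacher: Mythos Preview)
Your reduction to $\Q\ll\Pcal$ via the maximal infinity set $A^c$ is correct and mirrors the paper's generalized Lebesgue decomposition (Lemma~\ref{L_gen_Lebesgue}), and truncating the e-variable class rather than the utility is a legitimate alternative to the paper's bounded-$U_n$ approximations of $\log$. But the Halmos--Savage step is wrong: $\Q\ll\Pcal$ does \emph{not} imply $\Q\ll\bar\P$ for any countable mixture from $\Pcal$. With $\Omega=[0,1]$, $\Pcal=\{\delta_\omega:\omega\in[0,1]\}$, and $\Q$ uniform (cf.\ Example~\ref{ex_P_bipolar}), the only $\Pcal$-null set is $\emptyset$, so $\Q\ll\Pcal$, yet every countable mixture of Dirac masses is discrete and $\Q$ is singular to it. This kills your (iii)$\Rightarrow$(iv). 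Once the numeraire exists, (i)$\Rightarrow$(iv) is available instead: for $X\in\Ecal$ and $b,M>0$, $\Q(X>b)\le\Q(X^*>M)+\Q(X/X^*>b/M)\le\Q(X^*>M)+M/b$, which tends to $0$ uniformly in $X$. (The paper does (ii)$\Rightarrow$(iv) via the Brannath--Schachermayer $L^0_+$ decomposition.) Add the trivial (iii)$\Rightarrow$(ii) to close the cycle.

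There is a second gap in the existence argument. Invoking the Proposition~\ref{P_log_optimality} perturbation to get $\E_\Q[X/X^*]\le1$ presupposes $\E_\Q[\log((1-t)X^*+tX)]\le\E_\Q[\log X^*]$ for the Koml\'os limit $X^*$, which you have not established. The right move is to pass to the limit in the first-order conditions at each truncation level: the argument of Proposition~\ref{P_log_optimality} applied within the convex set $\Ecal_n$ gives $\E_\Q[X/X_n^*]\le1$ for $X\in\Ecal_n$; convexity of $y\mapsto1/y$ transfers this to the Koml\'os combinations $\tilde X_n^*$; Fatou and then monotone convergence in $X$ extend to $X^*$ and all of $\Ecal$. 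Taking $X=1$ gives $\E_\Q[1/X^*]\le1$, so $X^*>0$ $\Q$-a.s.\ and the $\varepsilon$-perturbation is unnecessary. This is the same pass-to-the-limit-in-first-order-conditions mechanism the paper uses, except the paper exploits convexity and monotonicity of the derivatives $U_n'$ rather than of $1/y$.
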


\begin{proof}
The existence of a numeraire is Lemma~\ref{L_numeraire_exists} below, which relies on the preceding Lemmas~\ref{L_komlos} and~\ref{L_u_max}. Here we focus on the equivalence of the four conditions. Clearly \ref{T_numeraire_2} implies \ref{T_numeraire_1}. To see that \ref{T_numeraire_1} implies \ref{T_numeraire_2}, let $X^*$ be a $\Q$-almost surely finite numeraire and note that the numeraire property implies that any e-variable $X$ must be $\Q$-almost surely finite too. Next, we claim that \ref{T_numeraire_2} is equivalent to \ref{T_numeraire_3}. Suppose \ref{T_numeraire_3} fails. Then there is an event $A$ with $\Q(A) > 0$ and $\P(A) = 0$ for all $\P \in \Pcal$, in which case $X = \infty \bm1_A$ is an e-variable that is not $\Q$-almost surely finite. Thus \ref{T_numeraire_2} fails also. The converse  follows by noting that every e-variable is finite $\P$-almost surely for all $\P \in \Pcal$. Further, \ref{T_numeraire_4} implies \ref{T_numeraire_2} because under the former condition we have for any e-variable $X$ that $\Q(X = \infty) = \lim_{a \to \infty}\Q(X > a) = 0$. It remains to prove that \ref{T_numeraire_2} implies \ref{T_numeraire_4}. This more involved implication is the content of Lemma~\ref{L_unbounded_in_prob} below.
\end{proof}

Conditions~\ref{T_numeraire_2} and~\ref{T_numeraire_4} of the theorem describe the existence of a finite numeraire in financial terms. Condition~\ref{T_numeraire_2} simply states that infinite profits are impossible. Condition~\ref{T_numeraire_4} is a `pre-limiting' expression of the same fact, in the sense that it excludes sequences of bets that achieve unbounded profits with a fixed positive probability. Condition~\ref{T_numeraire_3} has statistical rather than financial meaning. If it were violated, the alternative would assign positive probability to some event that has zero probability under every element of the null; if such an event occurs, the null can be immediately rejected at any level, meaning that any e-variable could be set to equal infinity on that event, violating the other statements.

The rest of this section contains the remaining parts of the proof of Theorem~\ref{T_numeraire}. This material can be skipped without compromising the understanding of Sections~\ref{S_duality}, \ref{S_verification}, or \ref{S_examples}.

\begin{lemma} \label{L_unbounded_in_prob}
If every e-variable is $\Q$-almost surely finite then $\Ecal$ is bounded in probability under $\Q$.
\end{lemma}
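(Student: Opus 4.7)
The plan is to argue by contrapositive: assuming $\Ecal$ is not bounded in probability under $\Q$, I will construct an e-variable $Y$ satisfying $\Q(Y = \infty) > 0$, which contradicts the hypothesis that every e-variable is $\Q$-almost surely finite. The key structural fact I intend to exploit is that $\Ecal$ is closed under countable convex combinations whose weights sum to at most one; this is immediate from Tonelli's theorem applied to nonnegative integrands.

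First I would observe that if $\Ecal$ fails to be bounded in probability, then the function $a \mapsto \sup_{X \in \Ecal} \Q(X > a)$ is nonincreasing and has a strictly positive limit $\delta > 0$ as $a \to \infty$. Consequently, for each $n$ one can select some $X_n \in \Ecal$ with $\Q(A_n) \geq \delta/2$, where $A_n = \{X_n > n^2\}$. Since $\bm1_{A_n}$ is dominated by the integrable constant $1$, reverse Fatou then yields $\Q(\limsup_n A_n) \geq \delta/2$; that is, on a set of positive $\Q$-measure one has $X_n(\omega) > n^2$ for infinitely many $n$.

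Next I would set $c_n = 6/(\pi^2 n^2)$ so that $\sum_n c_n = 1$, and define $Y = \sum_{n \geq 1} c_n X_n$. By Tonelli, $\E_\P[Y] = \sum_n c_n \E_\P[X_n] \leq \sum_n c_n = 1$ for every $\P \in \Pcal$, confirming $Y \in \Ecal$. For each $\omega \in \limsup_n A_n$, infinitely many terms of the defining series satisfy $c_n X_n(\omega) > c_n \cdot n^2 = 6/\pi^2$, so the series diverges and $Y(\omega) = \infty$. Hence $\Q(Y = \infty) \geq \delta/2 > 0$, giving the desired contradiction.

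The only delicate point, and the main thing that has to be chosen correctly, is the interplay between the threshold growth rate $a_n$ (where $\Q(X_n > a_n) \geq \delta/2$) and the weights $c_n$: one needs $\sum_n c_n < \infty$ so that $Y$ remains an e-variable, while simultaneously keeping $c_n a_n$ bounded away from zero so that the series defining $Y$ diverges on $\limsup_n A_n$. The pair $a_n = n^2$, $c_n \asymp 1/n^2$ is the simplest choice satisfying both constraints; any thresholds with $\sum_n 1/a_n < \infty$ would work equally well.
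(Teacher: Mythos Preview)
Your argument is correct and in fact more elementary than the paper's. The paper works in $L^0_+(\Q)$, observes that the set $C$ of (equivalence classes of) finite e-variables is convex, solid, and closed in probability, and then invokes the decomposition result of Brannath and Schachermayer to produce a single event $A$ with $\Q(A)>0$ such that $n\bm1_A \in C$ for every $n$; the e-variable $\liminf_n X_n$, where $X_n$ is a finite e-variable equal $\Q$-a.s.\ to $n\bm1_A$, is then infinite on $A$. Your route avoids this external structural lemma entirely: you use only reverse Fatou and closure of $\Ecal$ under countable convex combinations to build $Y=\sum_n c_n X_n$ directly. The trade-off is that the paper's approach yields the stronger intermediate conclusion that a \emph{fixed} event of positive $\Q$-mass supports arbitrarily large e-variables, whereas your $\limsup_n A_n$ is not known to carry any single large e-variable; but for the lemma as stated this extra information is not needed, and your self-contained argument is arguably preferable.
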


\begin{proof}
Consider the space $L^0_+(\Q)$ consisting of all nonnegative real-valued random variables modulo $\Q$-almost sure equivalence. Let $C \subset L^0_+(\Q)$ be the set of (equivalence classes of) random variables that are $\Q$-almost surely equal to a finite e-variable. The set $C$ is convex, solid, and closed in probability. Suppose now that $\Ecal$ is not bounded in probability under $\Q$. Then $C$ is not bounded in probability. The decomposition result in Lemma~2.3 of \cite{Brannath}   then shows that there is an event $A$ with $\Q(A) > 0$ such that $n \bm1_A$ belongs to $C$ for all $n \in \N$. By definition of $C$, for each $n$ there is a finite e-variable $X_n$ that is $\Q$-almost surely equal to $n \bm1_A$, say on an event $B_n$ with $\Q(B_n) = 1$. By Fatou's lemma, $X = \liminf_{n\to \infty} X_n$ is again an e-variable. Moreover, $X = \infty$ on $A \cap (\bigcap_{n \in \mathbb{N}} B_n)$, which coincides with $A$ up to a $\Q$-nullset and thus has positive probability under $\Q$. In other words, $X$ is an e-variable that is not $\Q$-almost surely finite.
\end{proof}

The following result, usually referred to as a `Koml\'os-type' lemma, is the basis of the proof that a numeraire exists. For a proof, see Lemma~A1.1 and the subsequent Remark~1 in \cite{DS:1994}.

\begin{lemma} \label{L_komlos}
Let $(X_n)_{n \in \N}$ be any sequence of $[0,\infty]$-valued random variables. One can choose $\widetilde X_n \in \conv(X_n, X_{n+1}, \ldots)$ such that $\widetilde X_n$ converges $\Q$-almost surely to a $[0,\infty]$-valued random variable.
\end{lemma}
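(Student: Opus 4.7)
The plan is the one due to Delbaen and Schachermayer: transfer $[0,\infty]$-valued random variables to the compact space $[0,1]$ via a strictly concave homeomorphism, and carry out a direct maximization argument. Take $\phi(x) = x/(1+x)$ with the convention $\phi(\infty) := 1$; this is a continuous, strictly concave bijection of $[0,\infty]$ onto $[0,1]$. Writing $C_n = \conv(X_n, X_{n+1}, \ldots)$, set
\begin{equation*}
\beta_n = \sup_{Y \in C_n} \E_\Q[\phi(Y)].
\end{equation*}
Since $C_{n+1} \subset C_n$, the sequence $(\beta_n)$ is nonincreasing and converges to some $\beta \in [0,1]$. Pick near-maximizers $\widetilde X_n \in C_n$ with $\E_\Q[\phi(\widetilde X_n)] \ge \beta_n - 1/n$.

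The core of the proof is a uniform strict-concavity estimate for $\phi$. An explicit computation (parametrizing by $u = 1 + (x+y)/2$ and $t = (x-y)/2$) yields
\begin{equation*}
\phi\!\left(\frac{x+y}{2}\right) - \frac{\phi(x) + \phi(y)}{2} \ge \frac{(\phi(x) - \phi(y))^2}{4}  \qquad \text{for all } x, y \in [0, \infty].
\end{equation*}
For $m \ge n$, the midpoint $(\widetilde X_n + \widetilde X_m)/2$ belongs to $C_n$ because $C_m \subset C_n$, so its $\phi$-expectation is at most $\beta_n$. Combining this with $\tfrac{1}{2}(\E_\Q[\phi(\widetilde X_n)] + \E_\Q[\phi(\widetilde X_m)]) \ge \tfrac{1}{2}(\beta_n + \beta_m - 1/n - 1/m)$ and the pointwise bound above gives
\begin{equation*}
\E_\Q\bigl[(\phi(\widetilde X_n) - \phi(\widetilde X_m))^2\bigr] \le 2(\beta_n - \beta_m) + 2(1/n + 1/m) \longrightarrow 0.
\end{equation*}
Hence $(\phi(\widetilde X_n))$ is Cauchy in $L^2(\Q)$ and converges to some $Z \in L^2(\Q)$ with $Z \in [0,1]$ $\Q$-almost surely.

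Passing to a subsequence, the convergence is almost sure; define $\widetilde X := \phi^{-1}(Z)$, using that $\phi^{-1} \colon [0,1] \to [0,\infty]$ is continuous (with $\phi^{-1}(1) = \infty$). The re-indexing $\widetilde X'_n := \widetilde X_{n_k}$ for $n_{k-1} < n \le n_k$ then produces the required sequence, since $C_{n_k} \subset C_n$ for $n \le n_k$ ensures $\widetilde X'_n \in \conv(X_n, X_{n+1}, \ldots)$, and $\widetilde X'_n \to \widetilde X$ almost surely. The main obstacle I expect is precisely the uniform concavity estimate: a naive strict-concavity argument is insufficient because $\phi''(x)$ vanishes as $x \to \infty$, so two faraway points near infinity contribute almost no concavity gap; however, the explicit form of $\phi(x) = x/(1+x)$ turns the gap into $t^2/[u(u^2-t^2)]$ and $|\phi(x)-\phi(y)|$ into $2t/(u^2-t^2)$, and a short manipulation exploiting $u - |t| \ge 1$ delivers the quadratic bound above, after which the rest of the argument is routine.
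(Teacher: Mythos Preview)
Your argument is correct and is precisely the Delbaen--Schachermayer proof: the paper does not prove Lemma~\ref{L_komlos} at all but simply cites Lemma~A1.1 and Remark~1 of \cite{DS:1994}, and what you have written is a faithful and complete rendering of that argument. The uniform concavity estimate for $\phi(x)=x/(1+x)$ checks out (with $u=1+(x+y)/2$, $|t|\le u-1$ gives $u^2-t^2\ge 2u-1\ge u$, which is exactly what is needed), the $L^2$-Cauchy bound follows as you wrote, and the subsequence/re-indexing step is the standard way to convert the $L^2$ limit into an almost-sure one while preserving membership in the forward convex hulls.
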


Next, we consider a certain concave maximization problem. This involves a \emph{utility function}, by which we mean a function $U\colon (0,\infty) \to \R$ that is continuous, nondecreasing, concave, and differentiable. By convention we extend the domain of $U$ to $[0,\infty]$ by setting $U(0) = \lim_{x \to 0} U(x)$ and $U(\infty) = \lim_{x \to \infty} U(x)$, and we define $U'(0)$ and $U'(\infty)$ similarly. These limits exist by monotonicity and concavity of $U$, and could be $\pm \infty$. The following result assumes in addition that $U$ is bounded above, which forces $U'(\infty) = 0$.

\begin{lemma} \label{L_u_max}
Let $U$ be a utility function that is bounded from above. There exists an e-variable $X^*$ that attains $\sup_{X \in \Ecal} \E_\Q[ U(X) ]$ and has the property that every e-variable is $\Q$-almost surely finite on $\{X^* < \infty\}$. If $\E_\Q[ U'(X^*) X^* ] < \infty$, one has the first-order condition
\begin{equation} \label{eq_L_u_max_FOC}
\E_\Q\left[ U'(X^*) (X - X^*) \right] \le 0, \quad X \in \Ecal.
\end{equation}
Here $U'(X^*)X^*$ and $U'(X^*) (X - X^*)$ are understood as zero on $\{X^* = \infty\}$.
\end{lemma}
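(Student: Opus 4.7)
My plan has three parts: existence of a maximizer via a Koml\'os-type argument, an enlargement procedure to install the finiteness property, and the first-order condition via differentiation along the linear perturbation $X(t) = (1-t) X^* + t X$. The main obstacle will be the passage to the limit in the last step, where the hypothesis $\E_\Q[U'(X^*) X^*] < \infty$ plays an essential role.

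\textbf{Existence.} Set $S := \sup_{X \in \Ecal} \E_\Q[U(X)]$; this is finite since $U$ is bounded above and $1 \in \Ecal$ gives $S \geq U(1) > -\infty$. Choose $X_n \in \Ecal$ with $\E_\Q[U(X_n)] \to S$, and apply Lemma~\ref{L_komlos} to obtain convex combinations $\widetilde X_n \in \conv(X_n, X_{n+1}, \ldots)$ converging $\Q$-a.s.\ to $X^*_0 := \liminf_n \widetilde X_n$. Convexity of $\Ecal$ gives $\widetilde X_n \in \Ecal$; concavity of $U$ forces $\E_\Q[U(\widetilde X_n)] \to S$; Fatou under each $\P \in \Pcal$ gives $X^*_0 \in \Ecal$; and reverse Fatou (available because $U$ is bounded above) together with continuity of $U$ on $[0,\infty]$ yields $\E_\Q[U(X^*_0)] \geq S$, so $X^*_0$ attains $S$.

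\textbf{Finiteness property.} Let $A^*$ denote the $\Q$-essential supremum of $\{\{X = \infty\} : X \in \Ecal\}$, which a standard argument realises as $A^* = \bigcup_n \{Y_n = \infty\}$ up to $\Q$-null for some countable family $Y_n \in \Ecal$. Because each $Y_n$ is an e-variable, $\P(Y_n = \infty) = 0$ for every $\P \in \Pcal$, hence $\P(A^*) = 0$. Define $X^* := \infty \bm{1}_{A^*} + X^*_0 \bm{1}_{(A^*)^c}$. Then $\E_\P[X^*] = \E_\P[X^*_0 \bm{1}_{(A^*)^c}] \leq 1$, so $X^* \in \Ecal$; and since $U$ is non-decreasing, $\E_\Q[U(X^*)] \geq \E_\Q[U(X^*_0)] = S$, with equality by optimality of $S$. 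By construction, every $X \in \Ecal$ satisfies $\{X = \infty\} \subseteq A^* = \{X^* = \infty\}$ up to $\Q$-null, which is the required finiteness property.

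\textbf{First-order condition.} Fix $X \in \Ecal$ and for $t \in (0,1]$ set $X(t) = (1-t) X^* + t X \in \Ecal$. On $\{X^* = \infty\}$ both $X(t)$ and $X^*$ equal $\infty$, so $U(X(t)) - U(X^*) = 0$; on $\{X^* < \infty\}$ the finiteness property forces $X < \infty$, so $X(t) \in [0,\infty)$ and a standard concavity computation shows that the difference quotient $Z_t := (U(X(t)) - U(X^*))/t$ is non-decreasing as $t \downarrow 0$, with pointwise limit $U'(X^*)(X - X^*)$ (interpreted as $0$ on $\{X^* = \infty\}$). Optimality of $X^*$ gives $\E_\Q[Z_t] \leq 0$ for every $t$. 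The main technical step is now passing to the limit inside the expectation: the hypothesis $\E_\Q[U'(X^*) X^*] < \infty$ dominates the negative part of the limit via $(U'(X^*)(X - X^*))^- \leq U'(X^*) X^*$ and, combined with the concave lower bound $Z_t \geq U'(X(t))(X - X^*)$ and continuity of $U'$ on $(0,\infty)$, yields an integrable lower envelope for $\{Z_t\}_{t \in (0, 1/2]}$ on which monotone convergence applies, giving $\E_\Q[U'(X^*)(X - X^*)] \leq 0$.
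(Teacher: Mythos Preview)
Your existence argument and the finiteness-property construction are both correct and match the paper's approach in substance. For the finiteness property you use the $\Q$-essential supremum of the sets $\{X=\infty\}$ over $X\in\Ecal$, whereas the paper invokes the generalized Lebesgue decomposition (Lemma~\ref{L_gen_Lebesgue}); these are essentially the same object viewed from two sides.

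The first-order condition, however, has a real gap. You assert that the bound $Z_t \ge U'(X(t))(X-X^*)$ together with continuity of $U'$ produces an integrable lower envelope for $\{Z_t\}_{t\in(0,1/2]}$, but this is not justified. On $\{X<X^*\}$ one has $Z_t \ge -U'(X(t))(X^*-X) \ge -U'((1-t)X^*)\,X^*$, and for $t\le 1/2$ this gives only $Z_t^- \le U'(X^*/2)\,X^*$. The hypothesis is that $U'(X^*)\,X^*$ is integrable, not $U'(X^*/2)\,X^*$; since $U'$ is merely decreasing, the ratio $U'(x/2)/U'(x)$ can be unbounded, and one can construct utilities $U$ and random variables $X^*$ for which $\E_\Q[U'(X^*)X^*]<\infty$ while $\E_\Q[U'(X^*/2)X^*]=\infty$. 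Continuity of $U'$ only tells you the bound converges pointwise to the right thing as $t\downarrow 0$; it does not give a uniform integrable minorant.

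The paper avoids this by a two-step argument you are missing. First restrict to $X\ge\delta$ for fixed $\delta>0$: then the difference quotient is bounded below by its value at $t=1$, namely $U(X)-U(X^*)\ge U(\delta)-\sup U$, a \emph{finite constant}. Monotone convergence applies directly and gives \eqref{eq_L_u_max_FOC} for such $X$. Second, for arbitrary $X\in\Ecal$ apply the first step to $\delta+(1-\delta)X$ and send $\delta\downarrow 0$; the integrand is now bounded below by $-U'(X^*)X^*$, which is integrable by hypothesis, and Fatou finishes. The point is that the integrability hypothesis enters only in the second limit, where the relevant quantity is exactly $U'(X^*)X^*$ rather than $U'(cX^*)X^*$ for some $c<1$.
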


\begin{proof}
Let $X_n \in \Ecal$ form a sequence such that $\E_\Q[U(X_n)]$ increases to the supremum. Lemma~\ref{L_komlos} yields a $\Q$-almost surely convergent sequence of random variables of the form $\widetilde X_n = \sum_{i=n}^{m_n} \lambda^n_i X_i$ for convex weights $\lambda^n_n,\ldots,\lambda^n_{m_n}$. Define $X^* = \liminf_{n\to\infty} \widetilde X_n$ and note that while this may be a proper limit inferior for some $\omega \in \Omega$, we have $\Q$-almost surely that $X^* = \lim_{n\to \infty} \widetilde X_n$. Each $\widetilde X_n$ belongs to $\Ecal$ since this set is convex, and Fatou's lemma then implies that $X^*$ belongs to $\Ecal$ as well. Next, since $U$ is concave and $\E[U(X_n)]$ is an increasing sequence,
\begin{equation} \label{eq_L_u_max_1}
\E_\Q[U(\widetilde X_n)] \ge \sum_{i=n}^{m_n} \lambda^n_i \E_\Q[U(X_i)] \ge \E_\Q[U(X_n)].
\end{equation}
Since $U$ is bounded above and $X^* = \lim_{n\to \infty} \widetilde X_n$ $\Q$-almost surely, we may use Fatou's lemma together with \eqref{eq_L_u_max_1} to get
\[
\E_\Q[U(X^*)] \ge \limsup_{n \to \infty} \E_\Q[U(\widetilde X_n)] \ge \limsup_{n \to \infty} \E_\Q[U(X_n)] = \sup_{X \in \Ecal} \E_\Q[U(X)].
\]
This shows that $X^*$ attains the supremum.

We next show that $X^*$ can be chosen so that every e-variable is $\Q$-almost surely finite on $\{X^* < \infty\}$. This uses Lemma~\ref{L_gen_Lebesgue}, which in particular states that there is an event $N$ that is $\Pcal$-negligible, meaning that $\P(N) = 0$ for all $\P \in \Pcal$, and satisfies $\Q(N' \setminus N) = 0$ for any other $\Pcal$-negligible event $N'$. Define $X^{**} = X^*\bm1_{N^c} + \infty \bm1_N$. This is still an e-variable since $N$ is $\Pcal$-negligible, and it attains the supremum since it is at least as large as $X^*$. Furthermore, since $N' = \{X = \infty\}$ is $\Pcal$-negligible for any $X \in \Ecal$, we have $\Q(\{X=\infty\} \setminus \{X^{**}=\infty\}) = 0$. Equivalently, by taking complements, $X$ is $\Q$-almost surely finite on $\{X^{**} < \infty\}$. Replacing $X^*$ by $X^{**}$ thus ensures the claimed property.

We now verify the first-order condition \eqref{eq_L_u_max_FOC}. Pick $X \in \Ecal$ such that $X \ge \delta$ for some $\delta > 0$; this restriction will be removed later. Define $X(t) = t X + (1-t) X^*$ for $t \in [0,1]$ and use the optimality of $X^* = X(0)$ to get
\[
\E_\Q\left[ \frac{U(X(t)) - U(X(0))}{t} \right] \le 0, \quad t \in (0,1].
\]
By concavity of $t \mapsto U(X(t))$, as $t$ tends to zero the difference quotient increases to $\frac{d}{dt}|_{t=0} U(X(t)) = U'(X^*)(X - X^*)$, understood to be zero on $\{X^* = \infty\}$, and is bounded below by $U(X(1)) - U(X(0)) \ge U(\delta) - \sup_{x > 0} U(x)$. Monotone convergence then yields \eqref{eq_L_u_max_FOC} for all $X$ bounded below by $\delta$. We now remove this restriction and consider an arbitrary $X \in \Ecal$. For each $\delta \in (0,1)$ the random variable $\delta + (1-\delta)X$ still belongs to $\Ecal$ and is bounded below by $\delta$. By what we just proved,
\[
\E_\Q\left[ U'(X^*) (\delta + (1-\delta)X - X^*) \right] \le 0.
\]
The expression inside the expectation is bounded below by $-U'(X^*) X^*$, understood to be zero on $\{X^*=\infty\}$, and this is integrable by assumption. We may then send $\delta$ to zero and use Fatou's lemma to deduce \eqref{eq_L_u_max_FOC}.
\end{proof}

\begin{lemma} \label{L_numeraire_exists}
A numeraire exists.
\end{lemma}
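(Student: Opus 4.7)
My plan is to construct the numeraire as a log-optimal e-variable via a Koml\'os-type extraction, then invoke Proposition~\ref{P_log_optimality}. The argument closely follows the perturbation technique used in the converse direction of that proposition.

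Set $v = \sup_{X \in \Ecal} \E_\Q[\log X]$, which lies in $[0, \infty]$ since $1 \in \Ecal$. Assume first that $v < \infty$. Choose a maximizing sequence $X_n \in \Ecal$ with $\E_\Q[\log X_n] \nearrow v$, and apply Lemma~\ref{L_komlos} to extract convex combinations $\widetilde X_n \in \conv(X_n, X_{n+1}, \ldots)$ converging $\Q$-almost surely to some $X^* \in [0, \infty]$. Convexity of $\Ecal$ and Fatou's lemma under each $\P \in \Pcal$ give $X^* \in \Ecal$, and concavity of $\log$ combined with Jensen's inequality yields $\E_\Q[\log \widetilde X_n] \geq \sum_i \lambda_i^n \E_\Q[\log X_i]$, a convex combination over indices $\geq n$, so that $\E_\Q[\log \widetilde X_n] \to v$.

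For any $X \in \Ecal$ and $t \in (0, 1)$, the mixture $(1-t)\widetilde X_n + tX \in \Ecal$, so $\E_\Q[\log((1-t)\widetilde X_n + tX)] \leq v$. Subtracting $\E_\Q[\log \widetilde X_n]$ and taking $\limsup$ in $n$ gives
\begin{equation*}
\limsup_{n \to \infty} \E_\Q\!\left[\log\!\left(1 - t + tX/\widetilde X_n\right)\right] \leq 0.
\end{equation*}
Since the integrand is bounded below by $\log(1-t)$, Fatou's lemma yields $\E_\Q[\log(1-t+tX/X^*)] \leq 0$. Dividing by $t$ and letting $t \downarrow 0$ as in the proof of Proposition~\ref{P_log_optimality}---the quotient increases to $X/X^* - 1$ while being bounded below by $t^{-1}\log(1-t) \geq -2\log 2$ on $(0, 1/2]$---produces $\E_\Q[X/X^*] \leq 1$. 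Taking $X = 1$ forces $\E_\Q[1/X^*] \leq 1$, so $X^* > 0$ $\Q$-almost surely, and $X^*$ is therefore the numeraire.

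The main obstacle is the case $v = +\infty$, where the inequality $\E_\Q[\log((1-t)\widetilde X_n + tX)] \leq v$ becomes vacuous and the subtraction step produces no useful bound. To handle this, I would work on the truncated families $\Ecal_m = \{X \in \Ecal : X \leq m\}$ where the log-objective is bounded above by $\log m < \infty$, and apply Lemma~\ref{L_u_max} with a bounded differentiable concave nondecreasing utility, such as a smoothed version of $\log(\fdot \wedge m)$, to obtain maximizers $X_m^* \in \Ecal_m$ satisfying the truncated inequality $\E_\Q[(X \wedge m)/X_m^*] \leq 1$ for all $X \in \Ecal$. A further Koml\'os extraction over $m$ then yields a global $X^*$; the delicate step is passing to the limit $m \to \infty$ in the truncated inequality while maintaining strict positivity of $X^*$ and extending the inequality to unbounded $X$.
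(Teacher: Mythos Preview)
Your $v<\infty$ argument is correct and pleasantly direct, bypassing Lemma~\ref{L_u_max} by working with the perturbation idea from Proposition~\ref{P_log_optimality} along the maximizing sequence. A minor reordering is needed: take $X=1$ in the Fatou step first to force $\Q(X^*=0)=0$, and only then identify the limit for general $X$. In fact, rerunning this same argument on $\Ecal_m$ (convex, solid, closed under $\liminf$, contains $1$) produces an $\Ecal_m$-numeraire $X_m^*\le m$ with $\E_\Q[(X\wedge m)/X_m^*]\le1$; note that Lemma~\ref{L_u_max} applied to a smoothed $\log(\fdot\wedge m)$ does \emph{not} give this, since its first-order condition features $U_m'(X_m^*)$ rather than $1/X_m^*$, and the maximizer need not lie in $\Ecal_m$. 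After Koml\'os and Fatou (using convexity of $x\mapsto 1/x$ to pass to the forward convex combinations), your scheme actually delivers the numeraire whenever $\Q\ll\Pcal$ (so that $X^*<\infty$ $\Q$-a.s.), regardless of whether $v$ is finite.

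The genuine gap is when $\Q\not\ll\Pcal$. On the maximal $\Pcal$-negligible set $N$ of Lemma~\ref{L_gen_Lebesgue}, optimality within $\Ecal_m$ forces $X_m^*=m$, hence $\widetilde X_m^*=\sum_{i\ge m}\lambda_i^m\,i\ge m$ and $X^*=\infty$ on $N$. For an e-variable $X$ with $X=\infty$ on part of $N$, the convention~\eqref{eq_arithmetic} sets $X/X^*=1$ there, yet $\liminf_m (X\wedge m)/\widetilde X_m^*=\liminf_m m/\widetilde X_m^*$ can be strictly less than $1$ because the Koml\'os weights may load indices $i\gg m$. Fatou then fails to recover $\E_\Q[X/X^*]\le1$. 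The paper handles this differently: it applies Lemma~\ref{L_u_max} to $\Ecal$ itself with bounded utilities $U_n$ whose derivatives are convex and monotone in $n$; the lemma's extra clause forces all maximizers $X_n^*$ to be infinite on a \emph{common} set $A^c$, and the first-order condition reads $\E_\Q[U_n'(X_n^*)X\,\bm1_A]\le\Q(A)$, with the crucial $\Q(A)$ (not $1$) on the right surviving the Koml\'os limit, while on $A^c$ the convention yields $X/X^*\le1$ directly. A natural repair of your route is to first locate $N$ via Lemma~\ref{L_gen_Lebesgue}, run your truncation argument under $\Q(\fdot\mid N^c)$ where every e-variable is finite and the limit goes through, and then set the numeraire equal to $\infty$ on $N$.
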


\begin{proof}
If we could have taken $U(x) = \log(x)$ in Lemma~\ref{L_u_max}, the first-order condition \eqref{eq_L_u_max_FOC} would have been the numeraire property. We cannot do this directly, because the logarithm is not bounded above. Instead we consider bounded approximations of the logarithm, each giving an associated optimal e-variable and first-order condition. We then pass to the limit in the first-order conditions using the Koml\'os-type lemma. We now turn to the details.

For each $n \in \N$ we define $U_n(x)$ by letting $U_n(1) = 0$ and setting the derivative to
\[
U_n'(x) = \begin{cases} x^{-1}, & x \le n, \\ 2n^{-1} - xn^{-2}, & n < x \le 2n, \\ 0, & x > 2n. \end{cases}
\]
Then $U_n$ satisfies the properties required in Lemma~\ref{L_u_max}, and we record for later use that $U_n'$ is convex and that
\begin{equation} \label{eq_P_numeraire_iii_to_i_2}
\text{$U_n'(x) \le U_i'(x)$ for $n \le i$ and $x \in (0,\infty]$}.
\end{equation}
Applying Lemma~\ref{L_u_max} with $U = U_n$ gives an optimal e-variable $X_n^*$ for each $n$. These have the additional property that the sets $\{X_n^* < \infty\}$ all coincide up to $\Q$-nullsets, say with a fixed set $A$. Because $U'_n(x) x \le 1$ for all $x \in (0,\infty]$, we may use the first-order condition \eqref{eq_L_u_max_FOC} to deduce
\begin{equation} \label{eq_P_numeraire_iii_to_i_3}
\E_\Q\left[ U_n'(X_n^*) X \bm1_A \right] \le \Q(A), \quad X \in \Ecal.
\end{equation}
Next, Lemma~\ref{L_komlos} yields a $\Q$-almost surely convergent sequence of random variables $\widetilde X_n^* = \sum_{i=n}^{m_n} \lambda^n_i X_i^*$ for some convex weights $\lambda^n_n,\ldots,\lambda^n_{m_n}$. We define $X^* = \liminf_{n \to \infty} \widetilde X_n^*$, which belongs to $\Ecal$ and is $\Q$-almost surely the limit of $\widetilde X_n^*$. Note that $A$ then coincides $\Q$-almost surely with the sets where $X^*$ and the $\widetilde X^*_n$ are finite. Using first \eqref{eq_P_numeraire_iii_to_i_3}, then \eqref{eq_P_numeraire_iii_to_i_2}, and finally the convexity of $U'_n$ we get 
\begin{align*}
\Q(A) \ge \E_\Q\left[ \sum_{i=n}^{m_n} \lambda^n_i U_i'(X_i^*) X \bm1_A \right] \ge \E_\Q\left[ \sum_{i=n}^{m_n} \lambda^n_i U_n'(X_i^*) X \bm1_A \right] \ge \E_\Q\left[ U_n'(\widetilde X_n^*) X \bm1_A \right].
\end{align*}
The expression inside the last expectation is nonnegative and converges $\Q$-almost surely to $X \bm1_A / X^*$. In particular, $X^*$ is $\Q$-almost surely strictly positive on $A$. Fatou's lemma now yields $\E_\Q[X \bm1_A /X^*] \le \Q(A)$. On $A^c$, where $X^* = \infty$, we have $X/X^* \le 1$ thanks to the convention \eqref{eq_arithmetic}. Hence $\E_\Q[X/X^*] \le \E_\Q[X \bm1_A /X^*] +  \Q(A^c) \le 1$. This shows that $X^*$ is a numeraire.
\end{proof}

\section{Duality and reverse information projection} \label{S_duality}

We continue to consider an arbitrary nonempty composite null hypothesis $\Pcal$ and a simple alternative $\Q$ on a measurable space $(\Omega,\Fcal)$. There is a duality relationship between $\Pcal$ and its set of e-variables $\Ecal$. To develop this perspective we make the following crucial definition, where we recall that $M_+$ denotes the set of all nonnegative measures on $\Omega$.

\begin{definition}
The \emph{effective null hypothesis} is the set
\[
\Peff = \{\P \in M_+ \colon \E_\P[X] \le 1 \text{ for all } X \in \Ecal\}.
\]
\end{definition}

The effective null hypothesis is the largest family of distributions whose set of e-variables is exactly $\Ecal$. Thus it consists of those distributions against which e-variables in $\Ecal$ do not provide any evidence. In particular, a `nontrivial' e-variable $X$, in the sense that $\E_\Q[X] > 1$, exists if and only if $\Q \notin \Peff$. 
Because the constant $X = 1$ is always an e-variable, elements of $\Peff$ have total mass at most one, and sometimes strictly less than one. For example, if $\P$ belongs to $\Peff$, then so does any $\P'$ in $M_+$ that is set-wise dominated by $\P$, meaning that $\P'(A) \le \P(A)$ for all $A$. 
In Section~\ref{S_verification} we discuss more explicit descriptions of the effective null; see  Example~\ref{ex_P_bipolar}, Lemma~\ref{L_P_bipolar_char_ref_measure}, and Corollary~\ref{cor:conv-hull}. See also Example~\ref{ex_simple_null} below.

\begin{remark}
There is a convex analytic interpretation of $\Ecal$ and $\Peff$ as the so-called \emph{polar} and \emph{bipolar} of $\Pcal$, usually denoted by $\Pcal^\circ$ and $\Pcal^{\circ\circ}$.\footnote{This terminology is slightly nonstandard in that $F_+$, which appears in the definition of $\Ecal$ (see the notation paragraph in the introduction), is not a subset of a locally convex topological vector space, whereas the standard theory of dual pairs works with such spaces. There are however exceptions, for example the bipolar theorems of \cite{Brannath} and \cite{MR3910414}. 
} For some of our results we make use of this connection by invoking a suitable \emph{bipolar theorem}; see the proof of Lemma~\ref{L_P_bipolar_char_ref_measure}.
\end{remark}

The following observation is simple, yet key to the theory developed in this paper.

\begin{lemma} \label{L_P_star}
The numeraire e-variable $X^*$ gives rise to an element $\P^* \in \Peff$ defined by $d\P^*/d\Q = 1/X^*$, understood as zero on $\{X^* = \infty\}$. Note that $\P^* \ll \Q$ by definition. 
\end{lemma}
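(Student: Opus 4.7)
My plan is as follows. The statement has two parts: first, $\P^*$ is a well-defined measure with $\P^* \ll \Q$ (which is essentially built into the definition); second, and more substantively, $\P^* \in \Peff$, meaning $\E_{\P^*}[X] \le 1$ for every $X \in \Ecal$.

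First I would note that $1/X^*$ is a $[0,\infty]$-valued (in fact $[0,\infty)$-valued since $X^*$ is $\Q$-a.s.\ strictly positive) measurable function, extended by the convention $1/\infty = 0$ on $\{X^*=\infty\}$. This specifies a well-defined nonnegative measure $\P^*$ on $(\Omega,\Fcal)$ via $\P^*(A) = \int_A (1/X^*)\,d\Q$, and plainly $\P^* \ll \Q$. Applying the numeraire property to the constant e-variable $X=1$ gives $\P^*(\Omega) = \E_\Q[1/X^*] \le 1$, so $\P^*$ is a sub-probability measure; this is consistent with membership in $\Peff$ but not logically required for it.

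The core of the proof is to show $\E_{\P^*}[X] \le 1$ for an arbitrary $X \in \Ecal$. By the change-of-variables formula applied to the density $d\P^*/d\Q = 1/X^*$ (with the convention that $1/X^* = 0$ on $\{X^* = \infty\}$), we have
\[
\E_{\P^*}[X] = \E_\Q\!\left[ X \cdot \tfrac{1}{X^*} \bm1_{\{X^* < \infty\}} \right].
\]
The integrand agrees with the ratio $X/X^*$ appearing in the numeraire inequality on $\{X^* < \infty\}$, while on $\{X^* = \infty\}$ the integrand is $0$ and the numeraire's convention \eqref{eq_arithmetic} gives $X/X^* \in \{0,1\} \ge 0$. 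Hence the change-of-variables expression is dominated pointwise by $X/X^*$, and so
\[
\E_{\P^*}[X] \le \E_\Q\!\left[ \tfrac{X}{X^*} \right] \le 1,
\]
where the last inequality is the defining numeraire property of $X^*$. This establishes $\P^* \in \Peff$.

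The only mildly delicate point is keeping straight the two conventions in play: $1/X^*$ is defined to vanish on $\{X^* = \infty\}$ when we set up $d\P^*/d\Q$, whereas the ratio $X/X^*$ in Definition~\ref{D_numeraire} uses \eqref{eq_arithmetic}, under which $\infty/\infty = 1$. The observation that the former is always dominated by the latter is the single inequality one needs, and it turns the numeraire property directly into the $\Peff$-property of $\P^*$. No deeper input is required.
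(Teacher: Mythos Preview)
Your proof is correct and follows essentially the same approach as the paper's: write $\E_{\P^*}[X] = \E_\Q[\bm1_{\{X^*<\infty\}} X/X^*]$, observe this is dominated by $\E_\Q[X/X^*]$, and invoke the numeraire property. Your additional remarks on the interplay between the two conventions are accurate and make the one-line inequality in the paper's proof more explicit, but the argument is otherwise identical.
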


\begin{proof}
Since $X^*$ is $\Q$-almost surely unique and strictly positive, $\P^*$ is well-defined. It belongs to $\Peff$ because $\E_{\P^*}[X] = \E_\Q[\bm1_{\{X^* < \infty\}}X/X^*] \le \E_\Q[X/X^*] \le 1$ for all $X \in \Ecal = \Pcal^\circ$ thanks to the numeraire property.
\end{proof}

Our results show that $\P^*$ can be interpreted as a \emph{reverse information projection}, as we explain after Theorem~\ref{T_duality} below. This concept first appeared in \cite{csiszar1984information} and later in the PhD thesis of \citet[Chapter 4]{li1999estimation} as well as in \cite{1201070}. It was used centrally in the context of e-variables in \cite{grunwald2020safe}, and was developed further in a recent preprint by~\cite{harremoes2023universal}. This earlier work assumes the existence of a reference measure, an assumption we are able to dispense with but nonetheless examine further in Section~\ref{S_verification}.

\begin{definition} \label{D_RIPr}
We refer to the measure $\P^*$ in Lemma~\ref{L_P_star} as the \emph{reverse information projection} (\textnormal{RIPr}) of $\Q$ on $\Peff$.
\end{definition}

Our use of the RIPr terminology for $\P^*$ is justified by what follows.
Recall that the entropy of $\Q$ relative to $\P$ is defined by
\[
H(\Q \mid \P) = \begin{cases}
\E_\Q\left[ \log \dfrac{d\Q}{d\P} \right] & \text{if } \Q \ll \P, \\
+\infty & \text{otherwise.}
\end{cases}
\]
We will show that $\P^*$ minimizes the relative entropy $H(\Q \mid \P)$ over $\Peff$. For this it will be convenient to use a slightly different expression for the relative entropy. For any $\P \in M_+$ we let $\P^a$ denote its absolutely continuous part with respect to $\Q$. One then has
\begin{equation} \label{eq_rel_ent_alt}
H(\Q \mid \P) = \E_\Q\left[- \log \frac{d\P^a}{d\Q} \right].
\end{equation}
To see this, suppose first $d\P^a / d\Q$ is strictly positive $\Q$-almost surely. Then $\Q \ll \P^a \ll \P$ and $d\Q/d\P = d\Q/d\P^a = 1/(d\P^a / d\Q)$ $\Q$-almost surely, so \eqref{eq_rel_ent_alt} holds. If instead $d\P^a / d\Q$ is zero with positive $\Q$-probability, then the right-hand side of \eqref{eq_rel_ent_alt} is infinite (the expectation is always well-defined by Jensen's inequality), and so is the relative entropy because $\Q \not\ll \P$. Thus \eqref{eq_rel_ent_alt} holds in this case, too.

\begin{remark}
In the above, $\P$ can be any element of $M_+$. We do not adjust the definition of relative entropy for the possibility that its total mass is not one.
\end{remark}

We now make the trivial observation that 
\begin{equation} \label{eq_duality_1}
\E_\Q\left[ X \frac{d\P^a}{d\Q}\right] \le 1 \text{ for all $X \in \Ecal$ and $\P \in \Peff$.}
\end{equation}
Indeed, the left-hand side equals $\E_{\P^a}[X]$ which is dominated by $\E_\P[X]$, hence by one. This leads to the following theorem, which expresses the duality between the null hypothesis and its set of e-variables and justifies calling $\P^*$ the RIPr. We first treat the notationally simpler and statistically more relevant case where $\Q \ll \Pcal$ and, hence, the numeraire $X^*$ is finite. The general case is in Theorem~\ref{T_duality_general}.

\begin{theorem} \label{T_duality}
Assume $\Q \ll \Pcal$ and let $\P^*$ be an element of $\Peff$ equivalent to $\Q$. The following statements are equivalent, where as above we write $\P^a$ for the absolutely continuous part of $\P$ with respect to $\Q$ (and hence $\P^*$).
\begin{enumerate}
\item\label{T_duality_1} $\P^*$ is the \textnormal{RIPr}.
\item\label{T_duality_2} $\displaystyle \E_\Q\left[ \frac{d\P^a}{d\P^*} \right] \le 1 \text{ for all } \P \in \Peff$.
\item\label{T_duality_3} $\displaystyle \E_\Q\left[ \log \frac{d\P^a}{d\P^*} \right] \le 0 \text{ for all } \P \in \Peff.$
\end{enumerate}
Letting $X^*$ be the numeraire and $\P^*$ the \textnormal{RIPr}, one has the strong duality relation
\begin{equation} \label{eq_strong_duality}
\E_\Q[\log X^*] = \sup_{X \in \Ecal} \E_\Q[\log X] = \inf_{\P \in \Peff} H(\Q \mid \P) = H(\Q \mid \P^*),
\end{equation}
where some, and then all, of these quantities may be $+\infty$. Here $\E_\Q[\log X]$ is understood as $-\infty$ whenever $\E_\Q[(\log X)^-] = \infty$.  
\end{theorem}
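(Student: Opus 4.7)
I plan to prove the equivalences via the cycle (i) $\Rightarrow$ (ii) $\Rightarrow$ (iii) $\Rightarrow$ (ii) $\Rightarrow$ (i), and then deduce the strong duality \eqref{eq_strong_duality}. For (i) $\Rightarrow$ (ii), note that $d\P^*/d\Q = 1/X^*$ with $X^*$ $\Q$-a.s.\ finite and strictly positive under $\Q \ll \Pcal$ (Theorem~\ref{T_numeraire}), so by the chain rule $d\P^a/d\P^* = X^* \cdot d\P^a/d\Q$, and applying \eqref{eq_duality_1} with $X = X^*$ yields $\E_\Q[d\P^a/d\P^*] \le 1$. The implication (ii) $\Rightarrow$ (iii) is Jensen's inequality applied to $\log$.

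For (iii) $\Rightarrow$ (ii) I would imitate the first-order argument from Proposition~\ref{P_log_optimality}: convexity of $\Peff$ gives $\P_t := (1-t)\P^* + t\P \in \Peff$ for $\P \in \Peff$ and $t \in (0,1]$, and one checks $\P_t^a = (1-t)\P^* + t\P^a$ (since $\P^* \ll \Q$), so $d\P_t^a/d\P^* = 1 - t + t(d\P^a/d\P^*)$. Applying (iii) to $\P_t$, dividing by $t$, and letting $t \to 0^+$ under Fatou's lemma (the integrand is bounded below by $t^{-1}\log(1-t) \ge -2\log 2$ on $t \in (0,1/2)$) yields $\E_\Q[d\P^a/d\P^* - 1] \le 0$, which is (ii). For (ii) $\Rightarrow$ (i) I argue by uniqueness: the RIPr $\widetilde{\P}^*$ from Lemma~\ref{L_P_star} is equivalent to $\Q$ (since $X^*$ is $\Q$-a.s.\ finite and positive) and satisfies (ii) by what we just proved. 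Writing $Y = d\widetilde{\P}^*/d\P^*$, applying (ii) symmetrically (the absolutely continuous parts are the measures themselves since both $\P^*$ and $\widetilde{\P}^*$ are equivalent to $\Q$) gives $\E_\Q[Y] \le 1$ and $\E_\Q[1/Y] \le 1$; strict convexity of $1/x$ in Jensen's inequality then forces $Y \equiv 1$ $\Q$-a.s., so $\P^* = \widetilde{\P}^*$.

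For the strong duality \eqref{eq_strong_duality}, the first equality is log-optimality of $X^*$ (Proposition~\ref{P_log_optimality}); the last is immediate from \eqref{eq_rel_ent_alt} together with $d\P^*/d\Q = 1/X^*$, yielding $H(\Q \mid \P^*) = \E_\Q[\log X^*]$. Weak duality $\sup_{X \in \Ecal} \E_\Q[\log X] \le \inf_{\P \in \Peff} H(\Q \mid \P)$ follows by applying Jensen to \eqref{eq_duality_1}: $\E_\Q[\log(X \cdot d\P^a/d\Q)] \le \log 1 = 0$, hence $\E_\Q[\log X] \le H(\Q \mid \P)$ (vacuous when $\Q \not\ll \P$ since the right-hand side is then $+\infty$). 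Sandwiching the two middle quantities between the now-equal outer ones forces all four values to coincide, with the common value possibly $+\infty$. I expect the main obstacle to be (iii) $\Rightarrow$ (ii), where one must correctly identify $\P_t^a$ and carefully justify the passage $t \to 0^+$; the subsequent Jensen-based uniqueness in (ii) $\Rightarrow$ (i) is elegant but relies crucially on the hypothesis that $\P^*$ is equivalent to $\Q$ so that the Lebesgue decomposition is trivial.
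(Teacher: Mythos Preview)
Your proof is correct and follows the paper's own argument essentially step for step: the same chain rule plus \eqref{eq_duality_1} for (i)$\Rightarrow$(ii), Jensen for (ii)$\Rightarrow$(iii), the convex perturbation $\P_t=(1-t)\P^*+t\P$ with Fatou for (iii)$\Rightarrow$(ii), the Jensen-based uniqueness argument with $Y=d\widetilde\P^*/d\P^*$ for (ii)$\Rightarrow$(i), and weak-duality-plus-sandwich for \eqref{eq_strong_duality}. The one spot where the paper is slightly more careful is the rearrangement of $\E_\Q[\log(X\cdot d\P^a/d\Q)]\le 0$ into $\E_\Q[\log X]\le H(\Q\mid\P)$: since neither summand need be $\Q$-integrable, the paper first truncates to $X\wedge n$ (making $\log(X\wedge n)$ bounded above so the split is legitimate) and then sends $n\to\infty$ by monotone convergence---you should add this justification.
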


Property~\ref{T_duality_2} satisfied by the RIPr can be viewed as dual to the numeraire property and generalizes Property~2 in \citet[Theorem 4.3]{li1999estimation}. The equivalence of \ref{T_duality_2} and \ref{T_duality_3} is analogous to the equivalence between the numeraire property and log-optimality of the numeraire established in Proposition~\ref{P_log_optimality}; we elaborate on this analogy in Remark~\ref{R_duality_analogy} below. Finally, \eqref{eq_strong_duality} states that strong duality holds between the problem of maximizing the expected logarithm over e-variables and the problem of minimizing relative entropy over the effective null. Note that $\E_\Q[\log X^*]$ is well-defined and nonnegative, possibly infinite, thanks to the numeraire property and the fact that $\log X^* \ge - 1/X^* + 1$. In the spirit of Remark~\ref{R_benchmark_numeraire}, we discuss a `benchmarked' version of \eqref{eq_strong_duality} in Remark~\ref{R_benchmark_duality} below.

The minimum relative entropy property of $\P^*$ justifies calling it the RIPr of $\Q$ on $\Peff$ as in Definition~\ref{D_RIPr}. In the terminology of \cite{harremoes2023universal}, property \ref{T_duality_3} states that the \emph{description gain} of switching from $\P^*$ to any other element of $\Peff$ is nonpositive, and this provides another justification for calling $\P^*$ the RIPr. We elaborate on the connection to their work in Remark~\ref{R_description_gain} below.
Existing literature on the RIPr assumes the existence of a reference measure along with much stronger versions of the condition that $\Q \ll \Pcal$. As we show in Section~\ref{S_verification}, in such settings our $\P^*$ coincides with existing notions of the RIPr of $\Q$ on $\Pcal$; see Theorem~\ref{T_ripr_mu} and its corollaries. However, our measure $\P^*$ exists in complete generality and satisfies properties normally associated with the RIPr; see Theorem~\ref{T_duality_general} below for the fully general case. This leads us to Definition~\ref{D_RIPr}, which we argue is the natural definition of the RIPr in the absence of any assumptions.

\begin{proof}[Proof of Theorem~\ref{T_duality}]
\ref{T_duality_1} $\Leftrightarrow$ \ref{T_duality_2}:
Suppose $\P^*$ is the RIPr. Because $\P^*$ and $\Q$ are equivalent we have $d\P^a/d\P^* = (d\P^a/d\Q) / (d\P^* / d\Q) = X^* d\P^a/d\Q$ up to $\Q$-nullsets for any $\P \in \Peff$. Thus we obtain \ref{T_duality_2} by taking $X = X^*$ in \eqref{eq_duality_1}. This proves the forward direction. To prove the converse, we show that the RIPr is the unique element of $\Peff$ that is equivalent to $\Q$ and satisfies \ref{T_duality_2}. This is argued exactly as uniqueness of the numeraire: if $\P_1^*, \P_2^* \in \Peff$ are both equivalent to $\Q$ and satisfy $ \E_\Q[ {d\P^a}/{d\P_i^*} ] \le 1$ for all $\P \in \Peff$, then with $Y = d\P^*_2/d\P^*_1$ we have $1 \le 1/\E_\Q[Y] \le \E_\Q[1/Y] \le 1$. Thus $Y$ is $\Q$-almost surely equal to one and $\P^*_1 = \P^*_2$.

\ref{T_duality_2} $\Leftrightarrow$ \ref{T_duality_3}: The forward implication follows from Jensen's inequality, and the converse uses the same argument as the proof of Proposition~\ref{P_log_optimality}. Specifically, $\P(t) = (1-t)\P^* + t \P$ is in $\Peff$ for all $t \in (0,1)$, so $\E_\Q[t^{-1}\log( d\P(t)^a/d\P^*)] \le 0$. Sending $t$ to zero yields \ref{T_duality_2}.

Finally, we argue \eqref{eq_strong_duality} assuming that $\P^*$ is the RIPr. Jensen's inequality and \eqref{eq_duality_1} give
\begin{equation} \label{duality_proof_1}
\E_\Q\left[ \log\left( X \frac{d\P^a}{d\Q} \right) \right] \le 0
\end{equation}
for all $X \in \Ecal$ and $\P \in \Peff$. This implies the `weak duality' inequality
\begin{equation} \label{duality_proof_2}
\E_\Q\left[ \log X \right] \le \E_\Q\left[ - \log \frac{d\P^a}{d\Q} \right] = H(\Q \mid \P).
\end{equation}
Indeed, if $(\log X)^-$ has infinite expectation, \eqref{duality_proof_2} is trivially true. Otherwise we first replace $X$ by $X \wedge n$, in which case \eqref{duality_proof_1} can be rearranged to \eqref{duality_proof_2} with $X \wedge n$ in place of $X$. Sending $n$ to infinity and using monotone convergence then gives \eqref{duality_proof_2}. Since equality is achieved by $X=X^*$ and $\P=\P^*$, we deduce \eqref{eq_strong_duality}.
\end{proof}

\begin{remark} \label{R_duality_analogy}
The equivalence of \ref{T_duality_2} and \ref{T_duality_3} in Theorem~\ref{T_duality}, and the equivalence stated in Proposition~\ref{P_log_optimality}, have commonalities that can be understood by considering two optimization problems. The first is to maximize $f(X) = \E_\Q[\log(X/X^*)]$ over $X \in \Ecal$. The second is to minimize $g(\P) = \E_\Q[-\log(d\P^a/d\P^*)]$ over $\P \in \Peff$. The numeraire property can be interpreted as the first-order condition that the Gateaux (i.e.\ directional) derivative $f'(X^*; X-X^*) = \lim_{t \downarrow 0} t^{-1}(f(X^* + t(X^*-X)) - f(X^*))$ in any feasible direction is nonpositive at $X^*$. Thanks to concavity, this is necessary and sufficient for optimality. Analogously, property \ref{T_duality_2} of Theorem~\ref{T_duality} states that the Gateaux derivative $g'(\P^*; \P-\P^*) = \lim_{t \downarrow 0} t^{-1}(g(\P^* + t(\P^*-\P)) - g(\P^*))$ in any feasible direction is nonnegative at $\P^*$. This is again necessary and sufficient for optimality, expressed by \ref{T_duality_3}. The first-order condition for $f$ states that $d\P^* = (1/X^*)d\Q$ yields an element of $\Peff$; and the first-order condition for $g(\P)$ states that $X^* = d\Q/d\P^*$ is an element of $\Ecal$. Furthermore, the two objective functions are related by $f(X) \le 0 \le g(\P)$, with equality precisely at the optimal solutions $X^*$ and $\P^*$. We refer to \cite{MR1727362} for more information on convex optimization in infinite dimension; see in particular Chapter~II therein.
\end{remark}

\begin{remark}\label{R_benchmark_duality}
Following Remark~\ref{R_benchmark_numeraire}, let us consider a benchmark $X_\text{b}$. In the presence of a benchmark, the strong duality relation \eqref{eq_strong_duality} takes the form
\[
\E_\Q\left[\log\frac{X^*}{X_\text{b}}\right]
= \sup_{X \in \Ecal} \E_\Q\left[\log\frac{X}{X_\text{b}}\right]
= \inf_{\P \in \Pcal_\text{eff}} \E_\Q\left[-\log\left( \frac{d\P^a}{d\Q} X_\text{b} \right)\right]
= \E_\Q\left[-\log\left( \frac{d\P^*}{d\Q} X_\text{b} \right)\right],
\]
where now all four quantities are finite. The optimizers $X^*$ and $\P^*$ do not depend on the choice of benchmark. This `benchmarked' version of strong duality is proved by replacing $X$ by $X/X_\text{b}$ and $d\P^a/d\Q$ by $X_\text{b}(d\P^a/d\Q)$ in \eqref{duality_proof_1} and the subsequent manipulations.
\end{remark}

\begin{remark} \label{R_description_gain}
In the spirit of \cite{harremoes2023universal}, one can define the \emph{maximum description gain} of switching from $\P \in \Peff$ to some other element of $\Peff$ by
\[
H(\Q \mid \P \leadsto \Peff) = \sup_{\P' \in \Peff} \E_\Q\left[ \log \frac{d\P'^a}{d\P} \right]
\]
if $\Q$ is absolutely continuous with respect to $\P$, and $+\infty$ otherwise. Thanks to condition \ref{T_duality_3} of Theorem~\ref{T_duality}, assuming that $\Q \ll \Pcal$, the RIPr is characterized as the unique element of $\Peff$ equivalent to $\Q$ such that $H(\Q \mid \P^* \leadsto \Peff) = 0$, and this is the smallest possible value that the maximum description gain can take.
\end{remark}

\begin{remark}
The strong duality in \eqref{eq_strong_duality} implies that there exists an e-variable with positive expected logarithm under $\Q$ if and only if $\Q \notin \Peff$. Therefore, thanks to Corollary~\ref{cor:conv-hull} below, Theorem~\ref{T_duality} extends a result by~\cite{zhang2023existence} showing that if $\Pcal$ is finite, then such an e-variable exists if and only if $\Q$ is not in the convex hull of $\Pcal$.
\end{remark}

\begin{remark}
The RIPr $\P^*$ is not a probability measure in general. Equivalently, it may happen that $\E_\Q[1/X^*] < 1$. We will see this phenomenon in Sections~\ref{S_param_null} and~\ref{S_symmetric_laws}, and other examples are given by \cite{harremoes2023universal}. It is also worth noting that there may exist elements $\widetilde\P \in \Peff$ that are different from $\P^*$, but whose absolutely continuous part $\widetilde\P^a$ is equal to $\P^*$. We will see this in Section~\ref{S_symmetric_laws} with $\widetilde\P$ being the symmetrization of $\Q$. Other examples are obtained from Example~\ref{ex_simple_null} below. 
\end{remark}

\begin{example}[Example~\ref{Ex:240225} continued] \label{ex_simple_null}
Consider a simple null hypothesis $\Pcal = \{\P_0\}$ with $\Q \ll \P_0$ so that the numeraire $X^*$ is finite. In this case $\P^* = \P_0^a$ and $X^* = 1/(d\P^*/d\Q) = d\Q/d\P_0$ up to $\Q$-nullsets. Here one can easily check that $\Peff$ consists of all sub-probability measures $\P$ such that $\P(A) \le \P_0(A)$ for all $A$. Indeed, any such $\P$ must belong to $\Peff$ since $\E_\P[X] \le \E_{\P_0}[X] \le 1$ for all e-variables $X$. Conversely, if $\P$ is not of this form, then $\P(A) > \P_0(A)$ for some $A$, and $X = \bm1_A / \P_0(A)$ is an e-variable such that $\E_\P[X] > 1$, showing that $\P \notin \Peff$. If $\P_0(A)=0$ we set $X = \infty \bm1_A$ in the previous sentence.
\end{example}

We end this section with the general version of Theorem~\ref{T_duality}. The proof reduces the general case to the one treated in Theorem~\ref{T_duality} by introducing the conditional probability measure $\Q^*(A) = \Q(A \mid X^* < \infty)$ if the quantity $\lambda^* = \Q(X^* < \infty)$ is positive, where $X^*$ is the numeraire. The fully degenerate case $\lambda^*=0$ is treated separately. It is evident from Definition~\ref{D_RIPr} that the RIPr $\P^*$ is absolutely continuous with respect to $\Q$, and we know from Lemma~\ref{L_P_star} that it belongs to $\Peff$. As shown below, one actually has more precise information, namely that
\begin{equation} \label{eq_scaled_bipolar}
\text{$\P^*$ is equivalent to $\Q$ on $\{X^* < \infty\}$ and belongs to $\lambda^* \cdot \Peff$},
\end{equation}
where we write $\lambda^* \cdot \Peff$ for the set $\{\lambda^* \P \colon \P \in \Peff\} \subset \Peff$. We continue to write $\P^a$ for the absolutely continuous part of $\P$ with respect to $\Q$. When applying this theorem it is useful to know that the event $\{X^* < \infty\}$ can be determined without first computing the numeraire; see Remark~\ref{R_num_finite} below.

\begin{theorem} \label{T_duality_general}
Let $X^*$ be the numeraire e-variable and define $\lambda^* = \Q(X^* < \infty)$. The \textnormal{RIPr} $\P^*$ satisfies \eqref{eq_scaled_bipolar}. Furthermore, for any sub-probability measure $\P^*$ that is absolutely continuous with respect to $\Q$ and satisfies \eqref{eq_scaled_bipolar}, the following statements are equivalent.
\begin{enumerate}
\item\label{T_duality_general_1} $\P^*$ is the \textnormal{RIPr}.
\item\label{T_duality_general_2} $\displaystyle \E_\Q\left[ \frac{d\P^a}{d\P^*} \bm1_{\{X^* < \infty\}} \right] \le 1 \text{ for all } \P \in \Peff.$
\item\label{T_duality_general_3} $\displaystyle \E_\Q\left[ \log \left( \lambda^* \frac{d\P^a}{d\P^*} \right) \bm1_{\{X^* < \infty\}} \right] \le 0 \text{ for all } \P \in \Peff.$
\end{enumerate}
If any of these hold, one has the strong duality relation \eqref{eq_strong_duality}. Finally,  suppose $\lambda^* \in (0,1)$, let $\P^*$ be the \textnormal{RIPr}, and define $\Q^* = \Q( \cdot \mid X^* < \infty)$. Then the numeraire and \textnormal{RIPr} associated with $\Q^*$ are $X^*$ and $\P^{**} = \P^*/\lambda^*$, and one has the `conditional' strong duality
\begin{equation} \label{eq_strong_duality_conditional}
\E_{\Q^*}[\log X^*] = \sup_{X \in \Ecal} \E_{\Q^*}[\log X] = \inf_{\P \in \Peff} H(\Q^* \mid \P) = H(\Q^* \mid \P^{**}).
\end{equation}
\end{theorem}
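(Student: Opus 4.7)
The plan is to reduce Theorem~\ref{T_duality_general} to Theorem~\ref{T_duality} by conditioning on the event $\{X^* < \infty\}$. Two simple preliminaries drive the reduction. First, $\{X^* = \infty\}$ is $\Pcal$-negligible, since $X^*$ is itself an e-variable and hence satisfies $\E_\P[X^*] \le 1$ for each $\P \in \Pcal$. Second, for any $\Pcal$-negligible event $A$ the random variable $\infty \cdot \bm1_A$ is an e-variable, and applying this to $A = \{X^* = \infty\}$ forces every $\P \in \Peff$ to vanish on $\{X^* = \infty\}$. Combined, these say that for any e-variable $X$ the inflated variable $Y_\infty = X \bm1_{\{X^* < \infty\}} + \infty \cdot \bm1_{\{X^* = \infty\}}$ is still an e-variable, while every $\P \in \Peff$ is supported on $\{X^* < \infty\}$.

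To establish \eqref{eq_scaled_bipolar} I would apply the numeraire property to $Y_\infty$. With the convention $\infty/\infty = 1$ one has
\[
\E_\Q[Y_\infty/X^*] = \E_\Q\!\left[X \bm1_{\{X^* < \infty\}}/X^*\right] + (1 - \lambda^*) \le 1,
\]
which rearranges to $\E_{\P^*}[X] \le \lambda^*$ for every $X \in \Ecal$, hence $\P^* \in \lambda^* \cdot \Peff$; the equivalence of $\P^*$ and $\Q$ on $\{X^* < \infty\}$ follows at once from $d\P^*/d\Q = 1/X^*$ and the $\Q$-a.s.\ strict positivity of $X^*$. When $\lambda^* > 0$, I introduce $\Q^* = \Q(\cdot \mid X^* < \infty)$. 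Dividing the displayed inequality by $\lambda^*$ gives $\E_{\Q^*}[X/X^*] \le 1$, so $X^*$ remains the numeraire for $\Q^*$; and since $\Q^*(X^* < \infty) = 1$, Theorem~\ref{T_numeraire} yields $\Q^* \ll \Pcal$, making Theorem~\ref{T_duality} available for $\Q^*$ in place of $\Q$.

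Set $\P^{**} = \P^*/\lambda^*$. Then $d\P^{**}/d\Q^* = 1/X^*$, identifying $\P^{**}$ as the RIPr of $\Q^*$. For any $\P \in \Peff$, the preliminaries imply $\P^a$ is supported on $\{X^* < \infty\}$, and a short density calculation (via $d\P^a/d\Q^* = \lambda^* \cdot d\P^a/d\Q$ on this set) gives $d\P^a/d\P^{**} = \lambda^* \cdot d\P^a/d\P^*$ on $\{X^* < \infty\}$. The change-of-measure identity $\lambda^* \E_{\Q^*}[f] = \E_\Q[f \bm1_{\{X^* < \infty\}}]$ then translates conditions (ii) and (iii) of Theorem~\ref{T_duality} applied to $\Q^*$ into precisely \ref{T_duality_general_2} and \ref{T_duality_general_3}, while (i) corresponds to \ref{T_duality_general_1} via $\P^* = \lambda^* \P^{**}$; uniqueness of the RIPr among the measures satisfying \eqref{eq_scaled_bipolar} transfers along this bijection. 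In the degenerate case $\lambda^* = 0$ one has $X^* = \infty$ $\Q$-a.s., and \eqref{eq_scaled_bipolar} forces $\P^* = 0$; then all three conditions hold vacuously since $\bm1_{\{X^* < \infty\}} = 0$ $\Q$-a.s.

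Conditional strong duality \eqref{eq_strong_duality_conditional} is \eqref{eq_strong_duality} of Theorem~\ref{T_duality} applied to $\Q^*$. For the unconditional \eqref{eq_strong_duality}, the case $\lambda^* = 1$ is immediate from Theorem~\ref{T_duality}; when $\lambda^* < 1$ all four quantities equal $+\infty$, because the e-variables $\bm1_{\{X^* < \infty\}} + c \cdot \bm1_{\{X^* = \infty\}}$ drive the supremum to $+\infty$ as $c \to \infty$, $\E_\Q[\log X^*] = +\infty$ since $\log X^* = +\infty$ on the event $\{X^* = \infty\}$ of positive $\Q$-measure, and every $\P \in \Peff$ is supported on $\{X^* < \infty\}$ so $\Q \not\ll \P$ and $H(\Q \mid \P) = +\infty$. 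The main obstacle throughout will be careful bookkeeping of the factor $\lambda^*$ and the indicator $\bm1_{\{X^* < \infty\}}$ as they propagate through the change of measure $\Q \to \Q^*$, together with disciplined use of the boundary conventions (especially $\infty/\infty = 1$ in \eqref{eq_arithmetic}) on $\{X^* = \infty\}$.
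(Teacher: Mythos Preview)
Your proposal is correct and follows essentially the same route as the paper's proof: both establish \eqref{eq_scaled_bipolar} via the inflated e-variable $X\bm1_{\{X^*<\infty\}}+\infty\bm1_{\{X^*=\infty\}}$, reduce to Theorem~\ref{T_duality} by passing to $\Q^*=\Q(\cdot\mid X^*<\infty)$ and $\P^{**}=\P^*/\lambda^*$, treat the degenerate case $\lambda^*=0$ separately, and obtain unconditional strong duality from the observation that all four terms are $+\infty$ when $\lambda^*<1$. The only organizational difference is that you front-load the observation that every $\P\in\Peff$ vanishes on $\{X^*=\infty\}$, whereas the paper derives what it needs inline; this does not change the argument.
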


\begin{proof}
We first let $\P^*$ be the RIPr and check \eqref{eq_scaled_bipolar}. Equivalence with $\Q$ on $\{X^* < \infty\}$ is evident from Definition~\ref{D_RIPr}. To show that $\P^*$ belongs to $\lambda^* \cdot \Peff$ we refine the calculation in the proof of Lemma~\ref{L_P_star}. Consider any $X \in \Ecal$ and set $X' = X \bm1_{\{X^* < \infty\}} + \infty \bm1_{\{X^* = \infty\}}$. This still belongs to $\Ecal$ since it is $\P$-almost surely equal to $X$ for all $\P \in \Pcal$. Thanks to the convention \eqref{eq_arithmetic} we have $X'/X^* = \bm1_{\{X^* < \infty\}}X'/X^* + \bm1_{\{X^* = \infty\}}$ and hence
\[
\E_{\P^*}[X] = \E_{\P^*}[X'] = \E_\Q\left[ \bm1_{\{X^* < \infty\}} \frac{X'}{X^*} \right] = \E_\Q\left[ \frac{X'}{X^*} \right] - \Q(X^* = \infty) \le \Q(X^* < \infty),
\]
using the numeraire property in the last step. It follows that $\P^* \in \lambda^* \cdot \Peff$.

We now let $\P^*$ be any sub-probability measure that is absolutely continuous with respect to $\Q$ and satisfies \eqref{eq_scaled_bipolar}, and prove the claimed equivalences and the strong duality statement. First, in the fully degenerate case $\lambda^* = 0$, \eqref{eq_scaled_bipolar} just states that $\P^* = 0$. This is also the RIPr, so \ref{T_duality_general_1} is true. Moreover, \ref{T_duality_general_2} and \ref{T_duality_general_3} reduce to the statements that $0 \le 1$ and $0 \le 0$. Thus all three statements are true and and hence trivially equivalent. The strong duality \eqref{eq_strong_duality} holds because all four terms are infinite. Second, in the `fully non-degenerate' case $\lambda^* = 1$, the theorem simply reduces to Theorem~\ref{T_duality}, which we have already proved.

Consider now the case $\lambda^* \in (0,1)$ and define $\Q^* = \Q( \cdot \mid X^* < \infty)$. Taking $\Q^*$ as our new alternative hypothesis, $X^*$ is still the numeraire. Indeed, it is a $\Q^*$-almost surely strictly positive e-variable, and for any other e-variable $X$ we define $X'$ as above and get
\[
\E_{\Q^*}\left[\frac{X}{X^*}\right] = \frac{1}{\lambda^*} \E_\Q\left[\bm1_{\{X^*<\infty\}} \frac{X}{X^*}\right] = \frac{1}{\lambda^*} \left(\E_\Q\left[\frac{X'}{X^*}\right] - \Q(X^*=\infty)\right) \le 1.
\]
Since $\Q^*(X^* < \infty) = 1$ by construction, we have from Theorem~\ref{T_numeraire} that $\Q^* \ll \Pcal$. We also define $\P^{**} = \P^* / \lambda^*$ which, thanks to \eqref{eq_scaled_bipolar}, is an element of $\Peff$ equivalent to $\Q^*$. Now, the statement \ref{T_duality_general_1} that $\P^*$ is the RIPr of $\Q$ is equivalent to $\P^{**}$ being the RIPr of $\Q^*$. Furthermore, \ref{T_duality_general_2} and \ref{T_duality_general_3} can be equivalently expressed in terms of $\Q^*$ and $\P^{**}$ as
\[
\E_{\Q^*}\left[ \frac{d\P^a}{d\P^{**}} \right] \le 1 \text{ for all } \P \in \Peff
\quad \text{and} \quad
\E_{\Q^*}\left[ \log \frac{d\P^a}{d\P^{**}} \right] \le 0 \text{ for all } \P \in \Peff,
\]
respectively. 
We may thus simply apply Theorem~\ref{T_duality} with $\Q^*$ and $\P^{**}$ in place of $\Q$ and $\P^*$ to deduce that the three statements are indeed equivalent and that the `conditional' strong duality \eqref{eq_strong_duality_conditional} holds. The `unconditional' strong duality \eqref{eq_strong_duality} holds because all four terms are infinite.
\end{proof}

\begin{remark} \label{R_num_finite}
The event $\{X^* < \infty\}$ can be determined without first computing the numeraire. Let $\Q = \Q^r + \Q^s$ be the generalized Lebesgue decomposition of $\Q$ with respect to $\Pcal$ given by Lemma~\ref{L_gen_Lebesgue}, and let $N$ be the event given in the lemma. Since $N$ is $\Pcal$-negligible, $X = \infty \bm1_N$ is an e-variable, so the numeraire must be infinite on $N$, $\Q$-almost surely. On the other hand, because the event $N' = \{X^* = \infty\}$ is $\Pcal$-negligible, the lemma also yields $\Q(N' \setminus N) = 0$. Thus $N$ and $\{X^* = \infty\}$ are both $\Pcal$-negligible and coincide up to $\Q$-nullsets, so the former can be used in place of the latter in Theorem~\ref{T_duality_general}.
\end{remark}

Following up on Example~\ref{ex_simple_null}, the following corollary records the RIPr and numeraire for a point null, generalizing a well known result that the likelihood ratio is the optimal e-variable if the two distributions are equivalent; see for example~\cite{shafer2021testing}.

\begin{corollary}
Consider an arbitrary point null $\Pcal = \{\P_0\}$. The \textnormal{RIPr} is the absolutely continuous part $\P^* = \P_0^a$ with respect to $\Q$, and the numeraire is
\begin{equation} \label{eq_num_general_point_null}
X^* = \infty\bm1_N + \frac{d\Q^*}{d\P_0}\bm1_{N^c},
\end{equation}
where $N$ is a set such that $\P_0(N) = 0$ and $\Q$ restricted to $N^c$ is absolutely continuous with respect to $\P_0$, and $\Q^* = \Q(\cdot \mid N^c)$ whenever $\Q(N^c) > 0$.
\end{corollary}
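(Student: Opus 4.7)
The plan is to apply the Lebesgue decomposition of $\Q$ with respect to $\P_0$ in order to isolate its $\P_0$-singular part, and then invoke Theorem~\ref{T_duality_general} to reduce the problem to the absolutely continuous case already settled in Examples~\ref{Ex:240225} and~\ref{ex_simple_null}.

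First I would fix $N$ as in the statement; its existence is exactly the classical Lebesgue decomposition of $\Q$ with respect to $\P_0$. Since $\P_0(N)=0$, the event $N$ is $\Pcal$-negligible, so Remark~\ref{R_num_finite} identifies $\{X^*=\infty\}$ with $N$ up to $\Q$-nullsets; in particular $\lambda^* := \Q(X^* < \infty) = \Q(N^c)$. I would then dispose of the degenerate case $\lambda^*=0$ first: there $X^*=\infty$ $\Q$-almost surely, $\P^*=0$, and the two conditions $\P_0(N)=0$ and $\Q(N^c)=0$ force $\P_0\perp\Q$ and hence $\P_0^a=0$, so both assertions collapse to trivialities (the second summand in~\eqref{eq_num_general_point_null} lives on the $\Q$-null set $N^c$).

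In the main case $\lambda^*>0$ I would introduce the conditional probability $\Q^* = \Q(\cdot\mid N^c)$; by the defining property of $N$ one has $\Q^* \ll \P_0$. Theorem~\ref{T_duality_general} then guarantees that the same random variable $X^*$ is also the numeraire under the alternative $\Q^*$, and because $\Q^* \ll \P_0$, Example~\ref{Ex:240225} identifies it as $X^* = d\Q^*/d\P_0$ on $N^c$ up to $\Q$-nullsets. Combined with $X^*=\infty$ on $N$ this establishes formula~\eqref{eq_num_general_point_null}. For the RIPr statement, Theorem~\ref{T_duality_general} identifies $\P^{**} := \P^*/\lambda^*$ with the RIPr of $\Q^*$ on $\Peff$, and Example~\ref{ex_simple_null} applied to the pair $(\Q^*,\P_0)$ identifies this in turn with the absolutely continuous part of $\P_0$ with respect to $\Q^*$; since $\P_0$ lives on $N^c$ and $\Q$ and $\Q^*$ differ there only by the scalar $\lambda^*$, their null sets on $N^c$ coincide, so this agrees with $\P_0^a$ taken with respect to $\Q$.

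The only real bookkeeping challenge is tracking the $\lambda^*$-rescaling between $\Q$ and $\Q^*$ through the conditional reduction, together with verifying that the absolutely continuous parts of $\P_0$ with respect to the two measures agree on $N^c$; I expect no substantive technical obstacle, since Theorem~\ref{T_duality_general} is designed precisely to bridge the general $\Q$ to the absolutely continuous conditional $\Q^*$.
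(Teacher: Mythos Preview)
Your approach mirrors the paper's: identify $N$ with $\{X^*=\infty\}$ via Remark~\ref{R_num_finite}, pass to $\Q^*$ through Theorem~\ref{T_duality_general}, and invoke the simple-null case (Examples~\ref{Ex:240225} and~\ref{ex_simple_null}) there. For the numeraire formula~\eqref{eq_num_general_point_null} this works cleanly and matches the paper.

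The ``bookkeeping challenge'' you flag, however, is substantive rather than cosmetic. Your argument correctly yields $\P^{**}=\P^*/\lambda^*=\P_0^a$ (the absolutely continuous parts of $\P_0$ with respect to $\Q^*$ and $\Q$ do coincide, since $\P_0(N)=0$ and $\Q,\Q^*$ share null sets on $N^c$). But this gives $\P^*=\lambda^*\P_0^a$, not $\P^*=\P_0^a$ as the corollary asserts. A three-point check confirms the discrepancy: with $\Omega=\{1,2,3\}$, $\P_0=(\tfrac12,\tfrac12,0)$ and $\Q=(\tfrac12,0,\tfrac12)$ one has $N=\{3\}$, $\lambda^*=\tfrac12$, $\Q^*=(1,0,0)$, and~\eqref{eq_num_general_point_null} gives $X^*=(2,0,\infty)$, which one verifies directly is the numeraire. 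Then $d\P^*/d\Q=1/X^*$ yields $\P^*=(\tfrac14,0,0)=\lambda^*\P_0^a$, whereas $\P_0^a=(\tfrac12,0,0)$. The paper's closing step ``$1/X^* = d\P_0^a / d\Q$ up to $\Q$-nullsets'' is thus incorrect when $\lambda^*<1$; your route through $\P^{**}$ is the right one and actually shows that the RIPr identity should read $\P^*=\lambda^*\P_0^a$. (When $\Q\ll\P_0$ one has $\lambda^*=1$ and the two formulas agree, consistent with Example~\ref{ex_simple_null}.)
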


\begin{proof}
Remark~\ref{R_num_finite} implies that $N$ equals the event where the numeraire is infinite up to $\Q$-nullsets, so $\Q^*$ defined in the corollary coincides with $\Q^*$ defined in Theorem~\ref{T_duality_general}. Moreover, thanks to Example~\ref{ex_simple_null}, the numeraire for $\Q^*$ is $d\Q^*/d\P_0$ up to $\Q^*$-nullsets. From these facts and Theorem~\ref{T_duality_general}, we conclude that the numeraire for $\Q$ is given by \eqref{eq_num_general_point_null}. Finally, since $1/X^* = d\P_0^a / d\Q$ up to $\Q$-nullsets, the \textnormal{RIPr} is $\P^* = \P_0^a$ as claimed.
\end{proof}

\section{Finding the numeraire and RIPr} \label{S_verification}

So far we have studied the abstract existence, uniqueness, and duality theory of the numeraire and RIPr, but we have not said much about how to actually compute them. In this section we develop tools for doing so, which we then apply to several examples in Section~\ref{S_examples}. 

We focus on the statistically most relevant situation where $\Q \ll \Pcal$. In this case the numeraire $X^*$, and indeed every e-variable, is $\Q$-almost surely finite and the RIPr $\P^*$ is equivalent to~$\Q$. The general case can be reduced to this case by replacing $\Q$ with the conditional measure $\Q^*$ in Theorem~\ref{T_duality_general} (except in the fully degenerate situation where the numeraire is infinity and the RIPr zero.)

Proposition~\ref{P_log_optimality} suggests that $X^*$ can be found by solving an optimization problem. Unfortunately this may not be straightforward because the optimization is over the set of \emph{all} e-variables, which can be difficult to characterize. For the same reason, even if a candidate $X^*$ has been found it may not be clear how to check that it does, in fact, satisfy the numeraire property. Alternatively, the strong duality of Theorem~\ref{T_duality} suggests minimizing relative entropy to find $\P^*$. However, the optimization is again over a set that can be difficult to describe explicitly, in this case the effective null $\Peff$. Moreover, checking that a candidate $\P^*$ is optimal involves comparing it to \emph{all} elements of $\Peff$.

We first present a verification theorem which simplifies the task of checking that candidates $X^*$ and $\P^*$ are in fact optimal. This result strengthens Corollary~2 of \cite{grunwald2020safe}. Despite the simple proof, the result turns out to be remarkably useful.

\begin{theorem} \label{T_verification}
Assume that $\Q \ll \Pcal$. Let $X^*$ be a $\Q$-almost surely strictly positive e-variable and let $\P^*$ be the measure given by $d\P^*/d\Q = 1/X^*$. Then $X^*$ is a numeraire if and only if $\P^*$ belongs to the effective null $\Peff$. Thus the numeraire is the only e-variable that is also a likelihood ratio between $\Q$ and some equivalent element of $\Peff$. Finally, $\E_{\P^*}[X^*] = 1$ and $\P^*$ is maximal in the sense that no other $\P \in \Peff$ equivalent to $\Q$ can satisfy $\P(A) \ge \P^*(A)$ for all $A$ with strict inequality for some $A$.
\end{theorem}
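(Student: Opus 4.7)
The plan is to prove the claims sequentially, leveraging the hypothesis $\Q \ll \Pcal$: by Theorem~\ref{T_numeraire}, every e-variable---in particular $X^*$---is $\Q$-almost surely finite, so $1/X^*$ is $\Q$-almost surely finite and strictly positive, $\P^*$ is equivalent to $\Q$, and none of the conventions in~\eqref{eq_arithmetic} come into play.

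For the main equivalence, the forward direction is exactly Lemma~\ref{L_P_star}. For the converse, suppose $\P^* \in \Peff$; then for any e-variable $X$, change of variables gives
\[
\E_\Q\!\left[\frac{X}{X^*}\right] \;=\; \E_\Q\!\left[ X\,\frac{d\P^*}{d\Q}\right] \;=\; \E_{\P^*}[X] \;\le\; 1,
\]
which is precisely the defining numeraire property. The uniqueness clause then follows: any e-variable of the form $X = d\Q/d\P$ with $\P \in \Peff$ equivalent to $\Q$ is $\Q$-a.s.\ strictly positive, and the corresponding measure with density $1/X$ is $\P$ itself, which lies in $\Peff$; by what we just proved, $X$ is a numeraire, and by the $\Q$-a.s.\ uniqueness of the numeraire established in Section~\ref{S_numeraire} it must equal $X^*$.

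The identity $\E_{\P^*}[X^*] = 1$ is a one-line computation from $X^*\cdot(1/X^*) = 1$ $\Q$-a.s. For the maximality assertion I would argue by contradiction: suppose $\P \in \Peff$ is equivalent to $\Q$ with $\P(A) \ge \P^*(A)$ for all $A$ and $\P(A_0) > \P^*(A_0)$ for some $A_0$. Writing $f = d\P/d\Q$ and $g = d\P^*/d\Q$ and testing the setwise inequality on the event $\{f < g\}$ yields $f \ge g$ $\Q$-a.s., while the strict inequality on $A_0$ forces $f > g$ on a set of positive $\Q$-measure. Since $X^* > 0$ $\Q$-a.s., integrating against $X^*$ gives
\[
\E_\P[X^*] \;=\; \int X^* f\, d\Q \;>\; \int X^* g\, d\Q \;=\; \E_{\P^*}[X^*] \;=\; 1,
\]
which contradicts $\P \in \Peff$ since $X^* \in \Ecal$. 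No serious obstacle appears anywhere; the only step that requires mild care is translating setwise domination of the measures into $\Q$-a.s.\ domination of their densities, which is what allows the maximality argument to close.
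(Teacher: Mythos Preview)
Your proof is correct and follows essentially the same approach as the paper's own proof: both invoke Theorem~\ref{T_numeraire} for finiteness, cite Lemma~\ref{L_P_star} for the forward implication, compute $\E_\Q[X/X^*] = \E_{\P^*}[X] \le 1$ for the converse, and derive maximality from the contradiction $1 = \E_{\P^*}[X^*] < \E_\P[X^*] \le 1$. Your treatment is slightly more detailed in two places---you spell out the uniqueness clause explicitly (the paper leaves this implicit) and you pass through densities to justify the strict inequality in the maximality step---but these are elaborations of the same argument rather than a different route.
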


\begin{proof}
Recall that all e-variables are $\Q$-almost surely finite thanks to Theorem~\ref{T_numeraire} and the assumption that $\Q \ll \Pcal$; in particular, $\P^*$ is equivalent to $\Q$. Now, if $X^*$ is a numeraire, then Lemma~\ref{L_P_star} yields that  $\P^* \in \Peff$. For the converse, the definition of $\P^*$ and the fact that it belongs to $\Peff$ yield
$\E_\Q[X/X^*] = \E_{\P^*}[X] \le 1$ for any e-variable $X$. This is the numeraire property. Finally, the definition of $\P^*$ immediately gives $\E_{\P^*}[X^*] = 1$, and $\P^*$ must be maximal because if a `dominating' equivalent $\P \in \Peff$ existed we would get the contradiction $1 = \E_{\P^*}[X^*] < \E_\P[X^*] \le 1$.
\end{proof}

\subsection{Generated null hypotheses}

The following corollary of Theorem~\ref{T_verification} applies in settings where the null hypothesis is `generated', in the sense of \eqref{T_verification_1} below, by a subset $\Ecal_0$ of e-variables. Null hypotheses of this kind were studied by \cite{MR4699555} in a sequential setting. The corollary shows that once a candidate $X^*$ has been found, it is enough to check the numeraire property for the generating family $\Ecal_0$, provided that it holds with equality for the e-variable which is identically one. The latter condition means that the RIPr $\P^*$ is a probability measure.

\begin{corollary} \label{C:240207}
Assume that $\Q \ll \Pcal$. Let $\Ecal_0$ be a family of $[0,\infty]$-valued random variables that generates the null hypothesis in the sense that
\begin{equation} \label{T_verification_1}
\Pcal = \{\P \in M_1 \colon \E_\P[X] \le 1 \text{ for all } X \in \Ecal_0\}.
\end{equation}
Let $X^*$ be a $\Q$-almost surely strictly positive e-variable such that
\begin{equation} \label{T_verification_2}
\E_\Q\left[ \frac{1}{X^*} \right] = 1 \text{ and } \E_\Q\left[ \frac{X}{X^*} \right] \le 1 \text{ for all } X \in \Ecal_0.
\end{equation}
Then $X^*$ is the numeraire and the \textnormal{RIPr} belongs to $\Pcal$.
\end{corollary}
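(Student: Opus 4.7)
The plan is to define $\P^*$ via the density $d\P^*/d\Q = 1/X^*$ and show directly, using the generating condition \eqref{T_verification_1}, that $\P^*$ lies in $\Pcal$ itself. Once that is established, $\P^*$ automatically belongs to $\Peff$ (since $\Pcal \subset \Peff$ by the definition of $\Peff$), so Theorem~\ref{T_verification} immediately gives that $X^*$ is the numeraire and $\P^*$ the \textnormal{RIPr}, proving both conclusions of the corollary.

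The first step is to check that $\P^*$ is a well-defined probability measure: since $X^*$ is $\Q$-almost surely strictly positive and $\Q$-almost surely finite (the latter because $\Q \ll \Pcal$, by Theorem~\ref{T_numeraire}), the density $1/X^*$ is a nonnegative $\Q$-integrable function, and the normalization $\E_\Q[1/X^*] = 1$ from \eqref{T_verification_2} makes $\P^*$ a probability measure equivalent to $\Q$. The second step is the key verification: for every $X \in \Ecal_0$, a change of measure gives
\[
\E_{\P^*}[X] = \E_\Q\!\left[\frac{X}{X^*}\right] \le 1,
\]
where the inequality is precisely the second condition in \eqref{T_verification_2}. By the generating property \eqref{T_verification_1}, this forces $\P^* \in \Pcal$.

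The third step is to conclude. Any $\P \in \Pcal$ satisfies $\E_\P[X] \le 1$ for all $X \in \Ecal$ by the very definition of $\Ecal$, so $\Pcal \subset \Peff$; hence $\P^* \in \Peff$. Theorem~\ref{T_verification} then yields that $X^*$ is the numeraire and, by Definition~\ref{D_RIPr} together with the relation $d\P^*/d\Q = 1/X^*$, that $\P^*$ is the \textnormal{RIPr}. Since we have already shown $\P^* \in \Pcal$, the second claim of the corollary also follows.

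There is no genuine obstacle here; the corollary is essentially a packaging of Theorem~\ref{T_verification} tailored to the case where the null is generated by an explicit family of e-variables. The only thing to watch is that the normalization $\E_\Q[1/X^*] = 1$ is indispensable: without it $\P^*$ would only be a sub-probability measure and would not belong to $\Pcal \subset M_1$, even though it would still belong to $\Peff$ and still identify $X^*$ as the numeraire via Theorem~\ref{T_verification}.
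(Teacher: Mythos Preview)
Your proof is correct and follows essentially the same route as the paper: define $\P^*$ by $d\P^*/d\Q = 1/X^*$, use \eqref{T_verification_2} to check that $\P^*$ is a probability measure satisfying $\E_{\P^*}[X]\le 1$ for all $X\in\Ecal_0$, invoke \eqref{T_verification_1} to conclude $\P^*\in\Pcal\subset\Peff$, and then apply Theorem~\ref{T_verification}. Your version is simply more explicit about the intermediate steps and adds the useful remark about why the normalization $\E_\Q[1/X^*]=1$ is essential.
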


\begin{proof}
Let $\P^*$ be given by $d\P^*/d\Q = 1/X^*$. Then
\eqref{T_verification_2} says that $\P^*$ is a probability measure such that $\E_{\P^*}[X] \le 1$ for all $X \in \Ecal_0$. Thus by \eqref{T_verification_1}, $\P^*$ belongs to $\Pcal$, hence to $\Peff$. The result now follows from Theorem~\ref{T_verification}.
\end{proof}

Thanks to Lemma~\ref{lem:lifting}, Corollary~\ref{C:240207} also holds if one has `$\supset$' instead of `$=$' in \eqref{T_verification_1}. This is intuitive: if one enlarges the generating set $\Ecal_0$ beyond what is necessary to specify $\Pcal$, then the right hand side of~\eqref{T_verification_1} can become smaller than $\Pcal$. In that case, \eqref{T_verification_2} only gets harder to satisfy, and if we find an e-variable $X^*$ satisfying it, it surely must still be the numeraire.

One may wonder if the equality in \eqref{T_verification_2} could be replaced by an inequality; this would amount to a significant strengthening of the corollary. The following example shows that this is unfortunately not possible.

\begin{example}
Consider the single coin toss space $\Omega = \{0,1\}$ and the random variable $X_0$ given by $X_0(0) = 2$ and $X_0(1) = 1/2$. Let the singleton set $\Ecal_0 = \{X_0\}$ generate the null hypothesis $\Pcal$, which then consists of all probability measures $\P$ with $\P(0) \le 1/3$. Next, let the alternative $\Q$ be given by $\Q(0)=1$, $\Q(1)=0$. We then have $\E_\Q[1/X_0] = 1/2 < 1$ and, trivially, $\E_\Q[X/X_0] \le 1$ for $X \in \Ecal_0$. If Corollary~\ref{C:240207} were to remain true with an inequality instead of an equality in \eqref{T_verification_2}, we would conclude that $X_0$ is the numeraire. However, this is not so. Indeed, for the e-variable $X^*$ given by $X^*(0) = 3$, $X^*(1) = 0$, we have $\E_\Q[X^*/X_0] = 3/2 > 1$. As suggested by the notation, $X^*$ is in fact the numeraire in this example. To see this, just note that every e-variable $X$ must satisfy $X(0) \le 3$ since $\Pcal$ contains the probability measure $\P$ with $\P(0) = 1/3$, and hence $\E_\Q[X/X^*] = X(0)/3 \le 1$. It is also worth noting that $\E_\Q[1/X^*] = 1/3 < 1$, which means that we cannot use Corollary~\ref{C:240207} to detect the numeraire property of $X^*$. The corollary is not a universal tool, but nonetheless a useful one as we will see in Section~\ref{S_examples}.
\end{example}

\subsection{When a reference measure exists} \label{S_ref_measure}

We now consider the dual approach of finding $\P^*$ directly, either by minimizing relative entropy or, if a candidate is available, checking that \ref{T_duality_2} or \ref{T_duality_3} of Theorem~\ref{T_duality} are satisfied. The difficulty is that the domain is all of $\Peff$, which can be hard to describe explicitly. Matters would become simpler if one could instead work directly with $\Pcal$, but this is not possible in general. Here is one example of what can go wrong.

\begin{example} \label{ex_P_bipolar}
This example shows that Theorem~\ref{T_duality}\ref{T_duality_2} may no longer characterize the \textnormal{RIPr} $\P^*$ uniquely if $\Peff$ is replaced by $\Pcal$. Let $\Omega = [0,1]$ with its Borel $\sigma$-algebra, and let $\Pcal = \{\delta_\omega \colon \omega \in [0,1]\}$ be the set of all Dirac point masses. In this case the set of e-variables is trivial in the sense that a random variable $X$ is an e-variable if and only if $0 \le X(\omega) \le 1$ for all $\omega \in [0,1]$. It follows that $\Peff$ contains \emph{all} sub-probabilities on $\Omega$. Now, let the alternative hypothesis $\Q$ be the uniform distribution on the unit interval. We then have the extreme situation that $\Pcal$ does not contain \emph{any} distribution absolutely continuous with respect to $\Q$, whereas $\Peff$ contains \emph{all} such distributions. In particular, Theorem~\ref{T_duality}\ref{T_duality_2} would be vacuously true for any candidate $\P^*$ if $\Peff$ were replaced by $\Pcal$. In contrast, as currently stated, Theorem~\ref{T_duality}\ref{T_duality_2} will only hold for $\P^* = \Q$, which is the correct answer.
\end{example}

The situation improves in the presence of a reference measure $\mu$ with respect to which $\Q$ and all $\P \in \Pcal$ are absolutely continuous. In this setting, if $\P_n$ and $\P$ are measures with densities $p_n = d\P_n/d\mu$ and $p = d\P/d\mu$ we say that $\P_n$ \emph{$\mu$-converges} to $\P$ if $p_n$ converges to $p$ in probability under $\mu$. A set $C$ of absolutely continuous measures is called \emph{$\mu$-closed} if it is closed with respect to $\mu$-convergence. It is called \emph{solid} if it contains every (nonnegative) measure $\P'$ that is dominated by some $\P \in C$ in the sense that $\P'(A) \le \P(A)$ for all $A$. The following result characterizes $\Peff$ in terms of these concepts.

\begin{lemma} \label{L_P_bipolar_char_ref_measure}
Suppose all $\P \in \Pcal$ are absolutely continuous with respect to a probability measure $\mu$. Then all elements of $\Peff$ are also absolutely continuous with respect to $\mu$, and $\Peff$ is the smallest $\mu$-closed convex solid set that contains $\Pcal$.
\end{lemma}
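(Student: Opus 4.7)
The plan is to identify each $\mu$-absolutely continuous sub-probability measure with its density in $L^0_+(\mu)$, so that the polar--bipolar duality for $\Pcal$ and $\Peff$ matches the one in the Brannath--Schachermayer bipolar theorem. Since every $\P \in \Pcal$ is dominated by $\mu$, this identification is unambiguous for e-variables as well: any two $\mu$-equivalent representatives yield the same expectations under all $\P \in \Pcal$. Under the pairing $(f,g) \mapsto \int fg \, d\mu$, the polar of (the image of) $\Pcal$ coincides exactly with $\Ecal$, and the bipolar $\Pcal^{\circ\circ}$ should then coincide with $\Peff$.

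First I would verify the four structural facts: $\Peff \ll \mu$, and $\Peff$ is convex, solid, and $\mu$-closed. For absolute continuity, given $\mu(A) = 0$ the set $A$ is $\Pcal$-negligible, so $X = \infty \bm1_A$ is an e-variable, and the defining inequality of $\Peff$ applied to this $X$ forces $\P(A) = 0$ for every $\P \in \Peff$. Convexity and solidity are immediate from linearity and monotonicity of expectation. For $\mu$-closedness, given $\P_n \in \Peff$ with $d\P_n/d\mu \to d\P/d\mu$ in $\mu$-probability, I would pass to a $\mu$-almost surely convergent subsequence and apply Fatou's lemma to obtain $\E_\P[X] \leq \liminf_n \E_{\P_n}[X] \leq 1$ for every $X \in \Ecal$, whence $\P \in \Peff$.

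Minimality is the main step. Taking any $\mu$-closed convex solid set $\Kcal$ of $\mu$-absolutely continuous measures that contains $\Pcal$, the inclusion $\Pcal \subset \Kcal$ reverses under polarization to give $\Kcal^\circ \subset \Pcal^\circ = \Ecal$, whence $\Kcal^{\circ\circ} \supset \Ecal^\circ = \Peff$. The bipolar theorem of \cite{Brannath} then gives $\Kcal = \Kcal^{\circ\circ}$, so $\Kcal \supset \Peff$. Combining this with the fact that $\Peff$ is itself a $\mu$-closed convex solid set containing $\Pcal$ identifies $\Peff$ as the smallest such set. The main obstacle is to match the hypotheses and conclusions of the bipolar theorem to our setting — in particular, to verify that the $L^0_+(\mu)$-polars computed via $\int fg \, d\mu$ really do correspond to $\Ecal$ and $\Peff$ as defined in the paper, which is where $\mu$-absolute continuity of all measures involved plays its essential role.
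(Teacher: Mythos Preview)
Your proposal is correct and follows essentially the same route as the paper: both arguments hinge on the Brannath--Schachermayer bipolar theorem in $L^0_+(\mu)$, together with the identification of $\Ecal$ with $\Pcal^\circ$ and of $\Peff$ with $\Pcal^{\circ\circ}$ (the paper handles the finite-versus-infinite e-variable issue you flag at the end by truncating $X$ to $X\wedge n$). Your direct verification that $\Peff$ is convex, solid, and $\mu$-closed is a minor redundancy, since once you establish $\Peff=\Pcal^{\circ\circ}$ these properties come for free from the bipolar theorem; the paper simply applies the theorem to $\Pcal$ rather than to an arbitrary $\Kcal$.
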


\begin{proof}
We identify $\Pcal$ with its set of densities $C = \{p \in L^0_+(\mu) \colon p = d\P/d\mu,\ \P \in \Pcal\}$. The polar of $C$ in $L^0_+(\mu)$ is $C^\circ = \{f \in L^0_+(\mu) \colon \E_\mu[pf] \le 1 \text{ for all } p \in C\}$, which is the set of (equivalence classes of) random variables that are $\mu$-almost surely equal to a finite e-variable. We claim that all elements of $\Peff$ are absolutely continuous with respect to $\mu$, and that its set of densities is precisely the bipolar $C^{\circ\circ} = \{p \in L^0_+(\mu) \colon \E_\mu[pf] \le 1 \text{ for all } f \in C^\circ\}$. To check absolute continuity, let $\P \in \Peff$ and consider any event $A$ such that $\mu(A)=0$. Then $\infty \bm1_A$ is an e-variable, hence $\E_\P[\infty \bm1_A] \le 1$, and thus $\P(A) = 0$. This shows absolute continuity and lets us identify $\Peff$ with its set of densities, namely, the set of $p \in L^0_+(\mu)$ such that $\E_\mu[p X] \le 1$ for all finite e-variables $X$. Here we may restrict to finite e-variables because $\E_\mu[p X] \le 1$ if and only if $\E_\mu[p (X \wedge n)] \le 1$ for all $n \in \N$. The finite e-variables are, up to $\mu$-nullsets, precisely the elements of $C^\circ$, so we deduce that $\Peff$ is indeed identified with $C^{\circ\circ}$. Finally, the 
bipolar theorem of \citet[Theorem~1.3]{Brannath} states that $C^{\circ\circ}$ is the smallest $\mu$-closed convex solid set that contains $C$. This completes the proof.
\end{proof}

\begin{corollary}\label{cor:conv-hull}
If $\Pcal$ is finite, then $\Peff$ consists of all sub-probabilities that are set-wise dominated by an element of $\conv(\Pcal)$. In particular, $\Peff \cap M_1 = \conv(\Pcal)$.
\end{corollary}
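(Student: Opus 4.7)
The plan is to invoke Lemma~\ref{L_P_bipolar_char_ref_measure} with a concrete reference measure. Since $\Pcal = \{\P_1,\dots,\P_n\}$ is finite, the mixture $\mu = \frac{1}{n}\sum_{i=1}^n \P_i$ dominates every element of $\Pcal$, so the lemma identifies $\Peff$ as the smallest $\mu$-closed, convex, solid subset of $M_+$ containing $\Pcal$. Let
\[
C = \{\P \in M_+ \colon \P \le \P' \text{ set-wise for some } \P' \in \conv(\Pcal)\},
\]
which is exactly the set of sub-probabilities described in the corollary (the sub-probability property being automatic from $\P'(\Omega)=1$). The goal is to establish $\Peff = C$.

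The inclusion $C \subset \Peff$ is immediate: $\Peff$ is convex (one-line check from its definition), solid (noted right after the definition of $\Peff$), and contains $\Pcal$, so any element of $C$ lies in $\Peff$.

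For the reverse inclusion, I would verify that $C$ is itself $\mu$-closed, convex, and solid, and contains $\Pcal$; minimality then forces $\Peff \subset C$. Containment of $\Pcal$ is trivial, convexity of $C$ reduces to convexity of $\conv(\Pcal)$, and solidity of $C$ follows by transitivity of set-wise domination. The main obstacle is $\mu$-closedness, and this is precisely where the finiteness of $\Pcal$ enters, through compactness of the simplex
\[
\Delta = \bigl\{\lambda \in [0,1]^n \colon \textstyle\sum_{i=1}^n \lambda_i = 1\bigr\}.
\]
Given measures $\P_k \in C$ whose densities $p_k = d\P_k/d\mu$ converge in $\mu$-probability to some $p$, choose dominating measures $\P'_k \in \conv(\Pcal)$ with densities $q_k = \sum_i \lambda^k_i p_i$, where $\lambda^k \in \Delta$ and $p_i = d\P_i/d\mu$, satisfying $p_k \le q_k$ $\mu$-a.s. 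Compactness of $\Delta$ yields a subsequence along which $\lambda^k \to \lambda \in \Delta$, so $q_k \to q := \sum_i \lambda_i p_i$ pointwise. Extracting one further subsequence on which $p_k \to p$ $\mu$-a.s., the inequality $p \le q$ $\mu$-a.s.\ passes to the limit, showing that the measure with density $p$ is set-wise dominated by the element of $\conv(\Pcal)$ with density $q$. Hence the $\mu$-limit lies in $C$, as required.

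For the ``in particular'' claim, if $\P \in \Peff \cap M_1$ is dominated set-wise by some $\P' \in \conv(\Pcal)$, then both measures have total mass $1$, and applying the domination to complements gives $\P(A^c) \le \P'(A^c)$, i.e.\ $\P(A) \ge \P'(A)$ for every $A$. Thus $\P = \P' \in \conv(\Pcal)$, while the reverse inclusion $\conv(\Pcal) \subset \Peff \cap M_1$ is immediate.
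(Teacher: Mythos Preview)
Your proof is correct and follows the same approach as the paper: take $\mu = \frac{1}{n}\sum_i \P_i$ as reference measure, invoke Lemma~\ref{L_P_bipolar_char_ref_measure}, and use compactness of the simplex $\Delta$ to obtain $\mu$-closedness. The paper's proof is terser---it simply asserts that $\conv(\Pcal)$ is already $\mu$-closed and lets the rest follow---whereas you spell out the compactness-and-subsequence argument for the solid hull explicitly, but the substance is the same.
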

\begin{proof}
For finite $\Pcal = \{\P_1,\ldots,\P_n\}$ we may take $\mu = (\P_1 + \cdots + \P_n)/n$ as reference measure. In this case $\conv(\Pcal)$ is already $\mu$-closed and the form of $\Peff$ follows from Lemma~\ref{L_P_bipolar_char_ref_measure}.
\end{proof}

Lemma~\ref{L_P_bipolar_char_ref_measure} is powerful because it characterizes \emph{all} elements of $\Peff$. The following theorem makes use of this to simplify the conditions of Theorem~\ref{T_duality}. Lemma~\ref{L_P_bipolar_char_ref_measure} also provides a way of checking that a particular measure belongs to $\Peff$. For example, it shows that limits in probability of (densities of) measures in the convex hull of $\Pcal$ must belong to $\Peff$. The latter is however also easy to check directly from the definition of $\Peff$ and Fatou's lemma, and does not require the bipolar theorem used in the proof of Lemma~\ref{L_P_bipolar_char_ref_measure}.

If a reference measure $\mu$ is given, we identify any $\P \in \Pcal$ with its density $p = d\P/d\mu$. Thanks to Lemma~\ref{L_P_bipolar_char_ref_measure}, we may similarly identify $\Peff$ with its set of densities. For the sake of brevity we abuse notation and write, for example, $p \in \Pcal$ even though, strictly speaking, $\Pcal$ is a set of measures, not densities. Next, given $\Q$ with density $q$, the absolutely continuous part of any $p \in \Peff$ has density $p^a = p\bm1_{\{q>0\}}$, and this is $\Q$-almost surely equal to $p$ itself. In view of these observations, the conditions \ref{T_duality_2} and \ref{T_duality_3} of Theorem~\ref{T_duality} characterizing the RIPr state, respectively, that $\E_\Q[p/p^*] \le 1$ and $\E_\Q[\log(p/p^*)] \le 0$, for all $p \in \Peff$. Here $p^* = d\P^*/d\mu$. The following theorem simplifies these conditions by showing that, essentially, it is enough to check them for elements of $\Pcal$ rather than $\Peff$.

\begin{theorem} \label{T_ripr_mu}
Assume that $\Q \ll \Pcal$ and that $\Q$ and all elements of $\Pcal$ are absolutely continuous with respect to a probability measure $\mu$. Let $p^* \in \Peff$ be equivalent to $\Q$. Then each of the conditions \ref{T_duality_1}, \ref{T_duality_2}, and \ref{T_duality_3} of Theorem~\ref{T_duality} is equivalent to
\begin{equation}\label{eq_dual_numeraire_propoerty_mu}
\E_\Q\left[ \frac{p}{p^*} \right] \le 1 \text{ for all $p \in \Pcal$,}
\end{equation}
as well as to
\begin{equation} \label{eq_description_gain_mu}
\E_\Q\left[ \log \frac{(1-t)p^* + t p}{p^*} \right] \le 0 \text{ for all $t \in (0,1)$ and $p \in \Pcal$.}
\end{equation}
In either case, $p^*$ is (the density of) the \textnormal{RIPr}.
\end{theorem}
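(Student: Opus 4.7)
The plan is to reduce the theorem to Theorem~\ref{T_duality} by establishing the two equivalences \eqref{eq_dual_numeraire_propoerty_mu}~$\Leftrightarrow$~\ref{T_duality_2} and \eqref{eq_description_gain_mu}~$\Leftrightarrow$~\eqref{eq_dual_numeraire_propoerty_mu}. Under the density identification, and using that $p^*$ is equivalent to $\Q$ so that $d\P^a/d\P^* = p/p^*$ holds $\Q$-a.s.\ for every $\P \in \Peff$ with $\mu$-density $p$, conditions~\ref{T_duality_2} and~\ref{T_duality_3} read $\E_\Q[p/p^*] \le 1$ and $\E_\Q[\log(p/p^*)] \le 0$ respectively, for all $p \in \Peff$. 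Since $\Pcal \subset \Peff$, the implications from~\ref{T_duality_2} to~\eqref{eq_dual_numeraire_propoerty_mu} and from~\ref{T_duality_3} to~\eqref{eq_description_gain_mu} are immediate (for the latter, note that $(1-t)p^*+tp$ lies in $\Peff$ and is strictly positive on $\{q>0\}$, so it coincides $\Q$-a.s.\ with its own $\Q$-absolutely continuous part). The real work is therefore to lift these two properties from $\Pcal$ back to $\Peff$.

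For~\eqref{eq_dual_numeraire_propoerty_mu}~$\Rightarrow$~\ref{T_duality_2}, I would introduce
\[
C = \bigl\{ p \in L^0_+(\mu) : \E_\Q[p/p^*] \le 1 \bigr\}
\]
and verify that $C$ is convex, solid, and $\mu$-closed. Convexity and solidity follow immediately from linearity and monotonicity of $p \mapsto \E_\Q[p/p^*]$. For $\mu$-closedness, take $p_n \to p$ in $\mu$-probability with $p_n \in C$, extract a subsequence converging $\mu$-a.s.\ (and hence $\Q$-a.s., using $\Q \ll \mu$), and apply Fatou's lemma to obtain $\E_\Q[p/p^*] \le \liminf_n \E_\Q[p_n/p^*] \le 1$. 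Lemma~\ref{L_P_bipolar_char_ref_measure} identifies $\Peff$ as the smallest $\mu$-closed convex solid set containing $\Pcal$, so the hypothesis $\Pcal \subset C$ forces $\Peff \subset C$, which is precisely~\ref{T_duality_2}.

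For~\eqref{eq_description_gain_mu}~$\Rightarrow$~\eqref{eq_dual_numeraire_propoerty_mu}, I would send $t \downarrow 0$ in~\eqref{eq_description_gain_mu}. Writing $x = p/p^*$, the map $\varphi(t) = t^{-1}\log(1-t+tx)$ is nonincreasing in $t \in (0,1)$ (a standard consequence of concavity of $\log$ together with $\log 1 = 0$), tends pointwise to $x-1$ as $t \downarrow 0$, and satisfies $\varphi(t) \ge \varphi(1/2) = 2\log((1+x)/2) \ge -2\log 2$ for $t \in (0,1/2]$. Monotone convergence then yields
\[
\E_\Q[p/p^* - 1] = \lim_{t \downarrow 0} \E_\Q\!\left[ \frac{1}{t} \log \frac{(1-t)p^* + tp}{p^*} \right] \le 0,
\]
which is~\eqref{eq_dual_numeraire_propoerty_mu}. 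Combined with Theorem~\ref{T_duality}, this closes the chain of equivalences, and the final claim that $p^*$ is the RIPr follows from condition~\ref{T_duality_1} of that theorem. The main technical obstacle is establishing $\mu$-closedness of $C$, which relies on the standard subsequence-plus-Fatou maneuver; everything else reduces to routine convex analysis and a directional-derivative application of monotone convergence.
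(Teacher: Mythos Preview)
Your proof is correct and follows essentially the same strategy as the paper's: both reduce \eqref{eq_dual_numeraire_propoerty_mu}~$\Rightarrow$~\ref{T_duality_2} to Lemma~\ref{L_P_bipolar_char_ref_measure} and obtain \eqref{eq_description_gain_mu}~$\Rightarrow$~\eqref{eq_dual_numeraire_propoerty_mu} by dividing by $t$ and sending $t\downarrow 0$. The one organizational difference is that for the first step the paper builds a set from below---taking the solid hull of $\conv(\Pcal)$, then its $\mu$-closure $\Pcal''$, checking that \eqref{eq_dual_numeraire_propoerty_mu} persists via Fatou, and finally verifying that $\Pcal''$ is closed, convex, and solid so that $\Peff\subset\Pcal''$---whereas you work from above by directly defining $C=\{p:\E_\Q[p/p^*]\le 1\}$ and checking its closedness/convexity/solidity; this is a bit cleaner since it avoids the explicit verification that the constructed $\Pcal''$ has those three properties.
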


\begin{remark}
The reason for the asymmetry between Theorem~\ref{T_duality}\ref{T_duality_3} and \eqref{eq_description_gain_mu} is that the latter only involves $p$ from $\Pcal$, not $\Peff$. Since $\Pcal$ may not contain $p^*$, nor be convex, we need to explicitly include convex combinations with $p^*$ in the numerator on the left-hand side of \eqref{eq_description_gain_mu}. Let us also emphasize that $t=1$ is excluded in \eqref{eq_description_gain_mu}. This sometimes makes the condition easier to check, for instance in the proof of Corollary~\ref{C_description_gain} below.
\end{remark}

\begin{proof}[Proof of Theorem~\ref{T_ripr_mu}]
We know that \ref{T_duality_1}, \ref{T_duality_2}, and \ref{T_duality_3} of Theorem~\ref{T_duality} are equivalent for elements of $\Peff$ that are equivalent to $\Q$. To prove the theorem it is therefore enough to show that that Theorem~\ref{T_duality}\ref{T_duality_2}, \eqref{eq_dual_numeraire_propoerty_mu}, and \eqref{eq_description_gain_mu} are equivalent.

We first show that Theorem~\ref{T_duality}\ref{T_duality_2} is equivalent to \eqref{eq_dual_numeraire_propoerty_mu}. The former is clearly a stronger condition because it states that \eqref{eq_dual_numeraire_propoerty_mu} holds with the larger set $\Peff$ in place of $\Pcal$. For the converse, suppose \eqref{eq_dual_numeraire_propoerty_mu} holds. Let $\Pcal'$ be the set of all sub-probability densities dominated by some element of the convex hull of $\Pcal$, and let $\Pcal''$ be the closure in $\mu$-probability of $\Pcal'$. It is clear that \eqref{eq_dual_numeraire_propoerty_mu} holds for all $p \in \Pcal'$ and then, by Fatou's lemma, also for all $p \in \Pcal''$. We claim that $\Pcal''$ is a closed convex solid set that contains $\Pcal$; it must then also contain $\Peff$ since this is the \emph{smallest} such set by Lemma~\ref{L_P_bipolar_char_ref_measure}. From this we conclude that Theorem~\ref{T_duality}\ref{T_duality_2} holds. To see that $\Pcal''$ is closed, convex, and solid, note that by definition, every $p'' \in \Pcal''$ is the limit in probability of some $p_n' \in \Pcal'$, meaning that $p_n' \le p_n$ for some $p_n$ in the convex hull of $\Pcal$. From this description convexity follows directly, closedness is obtained from a diagonal argument, and solidity follows by noting that any $\widetilde p \le p''$ is the limit of $\widetilde p \wedge p_n'$, which is still dominated by $p_n$ and thus belongs to $\Pcal'$.

The equivalence of \eqref{eq_dual_numeraire_propoerty_mu} and \eqref{eq_description_gain_mu} is argued as the equivalence of \ref{T_duality_2} and \ref{T_duality_3} of Theorem~\ref{T_duality}, or the equivalence of the numeraire property and log-optimality in Proposition~\ref{P_log_optimality}. Specifically, the forward implication uses Jensen's inequality and \eqref{eq_dual_numeraire_propoerty_mu} to get $\E_\Q[\log(1-t + tp/p^*)] \le \log(1 - t + t \E_\Q[p/p^*]) \le 0$, whereas for the reverse implication one divides \eqref{eq_description_gain_mu} by $t$, which is then sent to zero to yield \eqref{eq_dual_numeraire_propoerty_mu}.
\end{proof}

Even with a reference measure, it is possible that the relative entropy $H(\Q \mid \P)$ is infinite for all $\P \in \Pcal$ but finite for the \textnormal{RIPr} $\P^*$, similarly to what happens in Example~\ref{ex_P_bipolar}. The following example illustrates this.

\begin{example} \label{ex_cauchy_mixture_of_gaussians}
Let $\Omega = \R$ with its Borel $\sigma$-algebra, let $\Pcal$ be the set of all finite mixtures of Gaussians, and let $\Q$ be the standard Cauchy distribution. We may take $\mu = \Q$ as the reference measure, for example. With this setup, $H(\Q \mid \P) = \infty$ for all $\P \in \Pcal$. On the other hand, the Cauchy density can be obtained as the limit in $\mu$-probability (even in the uniform norm) of elements of $\Pcal$, so it follows from Lemma~\ref{L_P_bipolar_char_ref_measure} that $\Q \in \Peff$. Thus the \textnormal{RIPr} is $\P^* = \Q$ and $H(\Q \mid \P^*) = 0$.
\end{example}

We continue to identify measures with their $\mu$-densities. Theorem~\ref{T_ripr_mu} can be used to recover the following result of \cite{harremoes2023universal}, who define the \emph{maximum description gain} of switching from a particular $p \in \Pcal$ to some other element of $\Pcal$ by
\[
H(\Q \mid p \leadsto \Pcal) = \sup_{p' \in \Pcal} \E_\Q\left[\log \frac{p'}{p}\right]
\]
if $\Q$ is absolutely continuous with respect to $p$, and $+\infty$ otherwise. Note that this differs from the quantity $H(\Q \mid \P \leadsto \Peff)$ in Remark~\ref{R_description_gain}, which involves $\Peff$ rather than $\Pcal$.

\begin{corollary} \label{C_description_gain}
Assume that $\Q \ll \Pcal$ and that all elements of $\Pcal$ are absolutely continuous with respect to a probability measure $\mu$.  
If $\Pcal$ is convex and $(p_n)_{n \in \N}$ is a sequence in $\Pcal$ with $H(\Q \mid p_n \leadsto \Pcal) \to 0$, then $\log p_n$ converges to $\log p^*$ in $L^1(\Q)$, where $p^*$ is the \textnormal{RIPr}. 
\end{corollary}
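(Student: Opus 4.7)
The strategy is to prove (i) $H(\Q \mid p_n) \to H(\Q \mid p^*)$ and (ii) $\E_\Q[p_n/p^*] \le 1$, and then to convert these facts into $L^1(\Q)$-convergence of $\log(p_n/p^*)$ via the nonnegative convex function $\phi(y) = y - 1 - \log y$, which has its unique zero at $y = 1$ and whose tails control $|\log y|$ in the needed directions.

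The delicate step is (i). Since $H(\Q \mid p_n \leadsto \Pcal) = H(\Q \mid p_n) - \inf_\Pcal H(\Q\mid\cdot)$, the hypothesis says $H(\Q \mid p_n) \to \inf_\Pcal H(\Q \mid \cdot)$; by strong duality (Theorem~\ref{T_duality}) we have $H(\Q \mid p^*) = \inf_\Peff H(\Q \mid \cdot) \le \inf_\Pcal H(\Q \mid \cdot)$, so (i) reduces to showing $\inf_\Pcal H(\Q\mid\cdot) = H(\Q \mid p^*)$, equivalently $H(\Q \mid p \leadsto \Peff) = H(\Q \mid p \leadsto \Pcal)$ for each $p \in \Pcal$ with $\Q \ll p$. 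The $\ge$ direction is trivial. For $\le$, fix $p \in \Pcal$ and $p' \in \Peff$; by Lemma~\ref{L_P_bipolar_char_ref_measure} combined with convexity of $\Pcal$, there exist (along a subsequence) $p'_k \to p'$ $\mu$-a.s.\ with $p'_k \le p_k$ for some $p_k \in \Pcal$. For $t \in (0, 1)$, $(1-t)p + tp_k \in \Pcal$ and $(1-t)p + tp'_k \le (1-t)p + tp_k$, so
\[
\E_\Q[\log((1-t)p + tp'_k)/p] \le \E_\Q[\log((1-t)p + tp_k)/p] \le H(\Q \mid p \leadsto \Pcal).
\]
Letting $k \to \infty$ and applying Fatou's lemma (with the uniform lower bound $\log(1-t)$) gives $\E_\Q[\log((1-t)p + tp')/p] \le H(\Q \mid p \leadsto \Pcal)$, and the Jensen--concavity bound $\log((1-t) + tr) \ge t \log r$ then yields $t\E_\Q[\log(p'/p)] \le H(\Q \mid p \leadsto \Pcal)$; letting $t \to 1$ and taking the supremum over $p' \in \Peff$ completes (i).

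For (ii), Theorem~\ref{T_verification} gives that the numeraire $X^* = q/p^*$ (where $q = d\Q/d\mu$) is an e-variable for $\Pcal$, so writing $\P_n \in \Pcal$ for the distribution with $\mu$-density $p_n$,
\[
\E_\Q[p_n/p^*] = \int p_n X^* \, d\mu = \E_{\P_n}[X^*] \le 1.
\]
Setting $Y_n = p_n/p^*$ and using $\E_\Q[\log Y_n] = H(\Q \mid p^*) - H(\Q \mid p_n)$ (from Theorem~\ref{T_duality}),
\[
0 \le \E_\Q[\phi(Y_n)] = \E_\Q[Y_n] - 1 - \E_\Q[\log Y_n] \le H(\Q \mid p_n) - H(\Q \mid p^*) \longrightarrow 0.
\]

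For the final step, $\E_\Q[\phi(Y_n)] \to 0$ combined with continuity of $\phi$ and its unique zero at $1$ forces $Y_n \to 1$, hence $\log Y_n \to 0$, in $\Q$-probability. To upgrade to $L^1(\Q)$, I split on $\{Y_n \ge 8\}$, $\{Y_n \le 1/8\}$, and the middle region. On the first, the bound $\phi(y) \ge y/2$ for $y \ge 8$ yields $\E_\Q[|\log Y_n| \bm1_{\{Y_n \ge 8\}}] \le \E_\Q[Y_n \bm1_{\{Y_n \ge 8\}}] \le 2\E_\Q[\phi(Y_n)]$; on the second, $\phi(y) \ge -\log y - 1$ for $y \le 1$ gives $\E_\Q[|\log Y_n| \bm1_{\{Y_n \le 1/8\}}] \le \E_\Q[\phi(Y_n)] + \Q(Y_n \le 1/8)$; both vanish. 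The middle region is bounded and handled by bounded convergence. The main obstacle is step (i), namely the equality of description gains over $\Pcal$ and $\Peff$, which crucially uses convexity of $\Pcal$ via the Jensen--concavity trick and the characterisation of $\Peff$ as the $\mu$-closure of the solid hull of $\Pcal$.
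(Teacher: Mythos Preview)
Your argument has a genuine gap: it routes through the individual relative entropies $H(\Q\mid p_n)$ and $H(\Q\mid p^*)$. You write $H(\Q\mid p_n\leadsto\Pcal)=H(\Q\mid p_n)-\inf_\Pcal H(\Q\mid\cdot)$ and later $\E_\Q[\log Y_n]=H(\Q\mid p^*)-H(\Q\mid p_n)$, but both identities require $H(\Q\mid p_n)<\infty$. The corollary is \emph{specifically} intended for the regime where $H(\Q\mid p)=\infty$ for every $p\in\Pcal$ (the paper says so explicitly in the paragraph following the proof, and Example~\ref{ex_cauchy_mixture_of_gaussians} is a concrete instance). In that regime your step~(i), and the display $\E_\Q[\phi(Y_n)]\le H(\Q\mid p_n)-H(\Q\mid p^*)$, are $\infty-\infty$ and carry no information. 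As written, your proof only establishes Corollary~\ref{C_min_rel_entropy}.

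The good news is that your ingredients are sound and the fix is short. Your lemma $H(\Q\mid p\leadsto\Peff)=H(\Q\mid p\leadsto\Pcal)$ is correct, and applied with $p=p_n$ and $p'=p^*\in\Peff$ it yields directly $\E_\Q[\log(p^*/p_n)]\le H(\Q\mid p_n\leadsto\Pcal)\to 0$; combined with $\E_\Q[\log(p_n/p^*)]\le 0$ from Theorem~\ref{T_duality}\ref{T_duality_3} and the integrability of $(\log Y_n)^+$ furnished by your step~(ii), this gives $\E_\Q[\log Y_n]\to 0$ without ever mentioning $H(\Q\mid p_n)$. Then your $\phi$-argument goes through, since $\E_\Q[\phi(Y_n)]\le -\E_\Q[\log Y_n]\to 0$.

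For comparison, the paper's proof takes a rather different route: it never refers to $p^*$ until the very end. Instead it proves an explicit inequality $\tfrac19(|\log t|\wedge|\log t|^2)\le \log\!\big((2+t+t^{-1})/4\big)$, sets $t=p_m/p_n$, and uses convexity of $\Pcal$ (so that $(p_m+p_n)/2\in\Pcal$) to bound the right-hand side by $H_m+H_n$. This shows $(\log p_n)$ is Cauchy in $L^1(\Q)$, and only then is the limit identified as $\log p^*$ via Theorem~\ref{T_ripr_mu}. Your (corrected) approach is arguably more conceptual because it exploits the already-established existence and properties of the RIPr; the paper's approach is self-contained and yields the Cauchy property with an explicit rate, at the cost of a somewhat ad hoc inequality.
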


\begin{proof}
The key observation of \cite{harremoes2023universal} is that the description gain controls the $L^1(\Q)$ distance between the log-densities $\log p_n$, showing that they form a Cauchy sequence. We give a self-contained version of their argument. We claim that for any $t > 0$ one has
\begin{equation} \label{eq_log_t_ineq}
\frac{1}{9} \left(|\log t| \wedge |\log t|^2 \right)
\le \log\left(\frac{2 + t + t^{-1}}{4}\right)
= \log\left(\frac{1 + t^{-1}}{2}\right) + \log\left(\frac{t + 1}{2}\right). 
\end{equation}
To verify the inequality, note that it is equivalent to $f(x) \ge 0$ for $x > 0$, where $f(x) = \log((2+e^x+e^{-x})/4) - (x \wedge x^2)/9$. Since $f''(x) = 2/(2+e^x+e^{-x}) - (2/9)\bm1_{(0,1)}(x) > 0$ for $x \ne 1$, and since $f(0) = f'(0) = 0$, we deduce that $f(x) > 0$ for $x > 0$.

Setting $t = p_m/p_n$ in \eqref{eq_log_t_ineq}, taking expectation under $\Q$, and using that $\Pcal$ is convex and hence contains $(p_m+p_n)/2$, we get
\[
\frac{1}{9} \E_\Q\left[ \left|\log \frac{p_m}{p_n}\right| \wedge \left|\log \frac{p_m}{p_n}\right|^2 \right]
\le \E_\Q\left[ \log\left( \frac{p_m+p_n}{2p_m} \right) \right] + \E_\Q\left[ \log\left( \frac{p_m+p_n}{2p_n} \right) \right]
\le H_m + H_n,
\]
where we use the shorthand notation $H_n = H(\Q \mid p_n \leadsto \Pcal)$. From the general inequality $\E[ Y ] \le \E[ Y \wedge Y^2 ] + \sqrt{\E[ Y \wedge Y^2 ]}$ for nonnegative random variables $Y$ we then deduce
\[
\E_\Q\left[ |\log p_m - \log p_n| \right] \le 9(H_m + H_n) + 3 \sqrt{H_m + H_n}.
\]
This shows that $\log p_n$, $n \in \N$, is a Cauchy sequence in $L^1(\Q)$ and converges to some limit which we may write as $\log p^*$ for some $\Q$-almost surely positive random variable $p^*$. To ensure that $p^*$ is the density of a measure that is equivalent to $\Q$ we choose it to be zero on the $\Q$-nullset $\{q=0\}$, where $q = d\Q/d\mu$. This is easily achieved by replacing $p^*$ with $p^*\bm1_{\{q>0\}}$ if necessary. Now, convergence in $L^1$ implies convergence in probability, which is preserved under continuous transformations, so $p_n$ converges to $p^*$ in probability under $\Q$. Furthermore, for any $t \in (0,1)$ and $p \in \Pcal$ we have $\log \frac{(1-t)p_n + tp}{p_n} \ge \log(1-t) > -\infty$, so Fatou's lemma yields
\[
\E_\Q\left[\log \frac{(1-t)p^* + t p}{p^*}\right] \le \liminf_{n\to\infty} \E_\Q\left[\log \frac{(1-t)p_n + tp}{p_n}\right] \le \lim_{n\to\infty} H_n = 0 \text{ for all } p \in \Pcal.
\]
Theorem~\ref{T_ripr_mu} now shows that $p^*$ is the RIPr, provided we can argue that $p^*$ belongs to $\Peff$. This will use that $p^* = 0$ on $\{q=0\}$. Specifically, note that $\Peff$ contains $p_n$ and hence, being solid, also the smaller random variable $p_n \bm1_{\{q>0\}}$. Convergence in $\Q$-probability of $p_n$ to $p^*$ implies convergence in $\mu$-probability of $p_n \bm1_{\{q>0\}}$ to $p^*\bm1_{\{q > 0\}} = p^*$. Since $\Peff$ is closed in $\mu$-probability, it must contain $p^*$. This completes the proof.
\end{proof}

The maximum description gain is useful in situations where the relative entropy $H(\Q \mid p)$ is infinite for all $p \in \Pcal$. On the other hand, in many cases the relative entropies are finite and one recovers the original result of \cite{li1999estimation} on existence of the reverse information projection, as was also observed by \cite{harremoes2023universal}.

\begin{corollary} \label{C_min_rel_entropy}
Assume that $\Q \ll \Pcal$ and that all elements of $\Pcal$ are absolutely continuous with respect to a probability measure $\mu$.  
If $\Pcal$ is convex, $H(\Q \mid p) < \infty$ for some $p \in \Pcal$, and $(p_n)_{n \in \N}$ is an entropy minimizing sequence in $\Pcal$, meaning that $H(\Q \mid p_n) \to \inf_{p \in \Pcal} H(\Q \mid p)$, then $\log p_n$ converges to $\log p^*$ in $L^1(\Q)$, where $p^*$ is the \textnormal{RIPr}.  In particular, if some $p' \in \Pcal$ achieves the infimum, then $p'$ is the RIPr. Finally, $H(Q \mid p^*) = \inf_{p \in \Pcal} H(\Q \mid p)$.
\end{corollary}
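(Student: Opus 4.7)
The plan is to mimic the Cauchy-in-$L^1(\Q)$ argument from the proof of Corollary~\ref{C_description_gain}, with the key difference that entropy-minimizing replaces description-gain-minimizing. Denote $h^* = \inf_{p \in \Pcal} H(\Q \mid p)$; by hypothesis $h^* < \infty$. Since $H(\Q \mid p_n) \to h^*$, we may and shall assume $H(\Q \mid p_n) < \infty$ for all $n$, so that $\Q \ll p_n$ and $\log p_n$ is well-defined $\Q$-almost surely. By convexity of $\Pcal$, $(p_m+p_n)/2 \in \Pcal$, so $H(\Q \mid (p_m+p_n)/2) \ge h^*$; moreover this entropy is finite by convexity of $H(\Q \mid \fdot)$. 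Setting $t = p_m/p_n$ in \eqref{eq_log_t_ineq} and taking $\Q$-expectation yields, after using the identity $\E_\Q[\log((p_m+p_n)/(2p_m))] = H(\Q \mid p_m) - H(\Q \mid (p_m+p_n)/2)$ (valid because all three entropies are finite), the bound
\[
\frac{1}{9}\E_\Q\left[ \left|\log \frac{p_m}{p_n}\right| \wedge \left|\log \frac{p_m}{p_n}\right|^2 \right]
\le (H(\Q \mid p_m) - h^*) + (H(\Q \mid p_n) - h^*).
\]
The right-hand side tends to zero as $m, n \to \infty$. By the same Cauchy-in-$L^1$ argument as in Corollary~\ref{C_description_gain}, $\log p_n$ is then Cauchy in $L^1(\Q)$ and converges to some $\log p^*$, where $p^* > 0$ $\Q$-almost surely. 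By setting $p^*$ equal to zero on $\{q=0\}$ we have $p_n\bm1_{\{q>0\}} \to p^*$ both in $\Q$-probability and $\mu$-probability, and since $p_n \in \Pcal \subset \Peff$ while $\Peff$ is $\mu$-closed, convex, and solid by Lemma~\ref{L_P_bipolar_char_ref_measure}, we conclude $p^* \in \Peff$.

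Next, I would verify that $p^*$ is the \textnormal{RIPr} via Theorem~\ref{T_ripr_mu}, specifically condition~\eqref{eq_description_gain_mu}. For $t \in (0,1)$ and $p \in \Pcal$, the integrand $\log((1-t)p_n + tp)/p_n$ is bounded below by $\log(1-t)$, so Fatou's lemma gives
\[
\E_\Q\left[\log \frac{(1-t)p^* + tp}{p^*}\right] \le \liminf_{n\to\infty} \E_\Q\left[\log \frac{(1-t)p_n + tp}{p_n}\right]
= \liminf_{n\to\infty}\bigl(H(\Q \mid p_n) - H(\Q \mid (1-t)p_n + tp)\bigr),
\]
and by convexity of $\Pcal$ the convex combination lies in $\Pcal$, hence has entropy at least $h^*$. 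The right-hand side is therefore bounded by $\liminf_n (H(\Q \mid p_n) - h^*) = 0$, so $p^*$ satisfies \eqref{eq_description_gain_mu} and thus is the \textnormal{RIPr}.

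For the equality $H(\Q \mid p^*) = h^*$, the strong duality \eqref{eq_strong_duality} together with $\Pcal \subset \Peff$ yields $H(\Q \mid p^*) = \inf_{\P \in \Peff} H(\Q \mid \P) \le h^*$, which in particular gives $H(\Q \mid p^*) < \infty$. For the reverse inequality, the $L^1(\Q)$ convergence of $\log p_n$ to $\log p^*$ gives $\E_\Q[\log(p_n/p^*)] \to 0$, and since both $H(\Q \mid p_n)$ and $H(\Q \mid p^*)$ are finite we have $H(\Q \mid p_n) - H(\Q \mid p^*) = \E_\Q[\log(p_n/p^*)] \to 0$, forcing $H(\Q \mid p^*) = \lim_n H(\Q \mid p_n) = h^*$. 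Finally, if some $p' \in \Pcal$ achieves the infimum, then the constant sequence $p_n \equiv p'$ is entropy minimizing, so $\log p' = \log p^*$ in $L^1(\Q)$, hence $p'$ agrees with the \textnormal{RIPr} on the support of $\Q$.

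The main technical obstacle is the careful bookkeeping around finiteness of entropies that licenses the algebraic identity used in Step~1 and the subtraction of entropies in Step~4; the only other subtle point is choosing a version of $p^*$ vanishing on $\{q=0\}$ so that membership in $\Peff$ can be extracted from solidity and $\mu$-closedness.
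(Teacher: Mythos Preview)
Your proof is correct, but you re-derive the machinery of Corollary~\ref{C_description_gain} rather than invoking it. The paper's proof is a one-liner: since $H(\Q \mid p_n)$ is finite for large $n$, one has the identity
\[
H(\Q \mid p_n \leadsto \Pcal) = \sup_{p' \in \Pcal}\E_\Q\!\left[\log\frac{p'}{p_n}\right] = H(\Q \mid p_n) - \inf_{p' \in \Pcal} H(\Q \mid p') = H(\Q \mid p_n) - h^* \to 0,
\]
so an entropy-minimizing sequence is automatically a description-gain-minimizing sequence, and Corollary~\ref{C_description_gain} applies verbatim. This single observation replaces your entire Cauchy argument and the verification of~\eqref{eq_description_gain_mu}. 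For the final equality $H(\Q \mid p^*) = h^*$, both you and the paper use the $L^1(\Q)$ convergence of the log-densities; your appeal to strong duality~\eqref{eq_strong_duality} to first secure $H(\Q \mid p^*) < \infty$ is valid but heavier than necessary, since finiteness already follows from $\log(q/p^*) = \log(q/p_1) + (\log p_1 - \log p^*) \in L^1(\Q)$.
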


\begin{proof}
Since the quantities involved are finite for $n$ large enough, $\lim_{n\to\infty} H(\Q \mid p_n \leadsto \Pcal) = \lim_{n\to\infty} H(\Q \mid p_n) - \inf_{p \in \Pcal} H(\Q \mid p) = 0$, and the first claim follows from Corollary~\ref{C_description_gain}.   The second claim follows by simply taking $p_n = p'$ for all $n$. The final claim follows because $H(\Q \mid p_n) \to H(\Q \mid p^*)$ thanks to the $L^1(Q)$ convergence of the log-densities.
\end{proof}

Corollaries~\ref{C_description_gain} and~\ref{C_min_rel_entropy} confirm that the definition of the RIPr used in this paper, Definition~\ref{D_RIPr}, is consistent with existing definitions in the literature. Specifically, \cite{li1999estimation} defines the RIPr as the density $p^*$ with the property that whenever $(p_n)_{n \in \N}$ is a sequence in $\Pcal$ such that $H(\Q \mid p_n) \to \inf_{p \in \Pcal} H(\Q \mid p)$, we have $\log p_n \to \log p^*$ in $L^1(\Q)$. Corollary~\ref{C_min_rel_entropy} states that this indeed yields the RIPr of Definition~\ref{D_RIPr}. Similarly, \cite{harremoes2023universal} define the RIPr as the density $p^*$ with the property that whenever $(p_n)_{n \in \N}$ is a sequence in $\Pcal$ such that $H(\Q \mid p_n \leadsto \Pcal) \to \inf_{p \in \Pcal} H(\Q \mid p \leadsto \Pcal)$ (which must be zero whenever it is finite), $p_n$ converges to $\P^*$ in a certain metric which is equivalent to $L^1(\Q)$-convergence of the log-densities. Thanks to Corollary~\ref{C_description_gain}, this again yields the RIPr of Definition~\ref{D_RIPr}.

\section{Examples} \label{S_examples}

We now turn to examples. In these examples we consider a single random observation $Z$ in a measurable space $\Zcal$. For concreteness we take $(\Omega, \Fcal)$ to be $\Zcal$ with its $\sigma$-algebra, and $Z$ the canonical random variable. We will always have $\Q \ll \Pcal$. We start with a simple parametric example, before moving on to more sophisticated nonparametric examples where there exists no reference measure, in order to display the power and generality of our theory. We end with another parametric example. Except for the first and the last example, past work like \cite{wasserman2020universal,grunwald2020safe,harremoes2023universal} cannot handle the situations discussed here.

\subsection{A parametric null} \label{S_param_null}

We start with a simple parametric example from \cite{harremoes2023universal} involving a real-valued observation $Z$ in $\Zcal = \R$, and then generalize our observations at the end of this subsection. The null hypothesis $\Pcal$ consists of two unit-variance Gaussians $\P_1$ and $\P_2$ with mean $+1$ and $-1$, and the alternative hypothesis $\Q$ is the standard Cauchy distribution. This example was considered by the aforementioned authors because $H(Q \mid P_j) = \infty$ for $j \in \{1,2\}$. 

We claim that the RIPr is $\P^* = (\P_1+\P_2)/2$. This probability measure is equivalent to $\Q$ and belongs to $\Peff$, which always contains the convex hull of $\Pcal$. Furthermore, writing $p_1,p_2,p^*,q$ for the Lebesgue densities, we have $p_2(z) = p_1(-z)$ and $q(z) = q(-z)$ and hence
\[
\E_\Q\left[ \frac{p_1(Z)}{p^*(Z)} \right] 
= \int_{-\infty}^\infty \frac{2 p_1(z) q(z)}{p_1(z) + p_2(z)} dz
= \E_\Q\left[ \frac{p_2(Z)}{p^*(Z)} \right].
\]
Since $p_1 + p_2 = 2p^*$ we also have $\E_\Q[p_1(Z)/p^*(Z)] + \E_\Q[p_2(Z)/p^*(Z)] = 2$, and conclude that
\[
\E_\Q\left[ \frac{p_i(Z)}{p^*(Z)} \right] = 1, \quad i=1,2.
\]
Thus \eqref{eq_dual_numeraire_propoerty_mu} of Theorem~\ref{T_ripr_mu} is satisfied, and $\P^*$ is the RIPr. The numeraire is
\begin{equation} \label{eq_ex_param_num}
X^* = \frac{2 q(Z)}{p_1(Z) + p_2(Z)}.
\end{equation}
It is worth remarking that the universal inference e-variable~\citep{wasserman2020universal} would be given by $q(Z)/\max\{p_1(Z),p_2(Z)\}$, which is clearly smaller than $X^*$. Note also that we could have taken $\Q$ to be any symmetric distribution with a strictly positive density. More generally, we could let $\Q$ be symmetric with a density $q$ that is not necessarily strictly positive. In this case the RIPr would be a sub-probability measure with density $p^* = \frac{1}{2}(p_1+p_2)\bm1_{\{q > 0\}}$, and the numeraire would still be given by \eqref{eq_ex_param_num}.

\subsection{Bounded mean}

This example is a variant of one studied in \cite{waudby2020estimating}, with the corresponding duality theory derived explicitly in~\cite{honda2010asymptotically}, and generalized to other classes of distributions in~\cite{agrawal2020optimal}. 
Let $\Zcal = [0,1]$ so that the random variable $Z$ takes values in the unit interval. Fix $\mu \in (0,\frac12)$ and consider the null hypothesis that the mean of $Z$ is at most $\mu$,
\[
\Pcal = \{\P \in M_1 \colon \E_\P[Z] \le \mu\}.
\]
There is no single dominating measure for $\Pcal$ since it contains the uncountable non-dominated family $\{\delta_z \colon z \in [0,\mu]\}$. However, $\Pcal$ is generated by $\Ecal_0 = \{Z/\mu\}$ in the sense of Corollary~\ref{C:240207}, so our strategy will be to locate a candidate numeraire and then apply the corollary to verify that the candidate is, in fact, the numeraire. The alternative hypothesis $\Q$ is the uniform distribution on $[0,1]$, and we have $\Q \ll \Pcal$ for the simple reason that $\P(A) = 0$ for all $\P \in \Pcal$ implies that $A$ must actually be empty.

To find a candidate numeraire, we observe that two natural e-variables are $Z/\mu$ and the constant one. All convex combinations of these are also e-variables; equivalently, $1 + \lambda(Z-\mu)$ is an e-variable for each $\lambda \in [0,\mu^{-1}]$. We now look for a log-optimal e-variable in this class by directly maximizing $f(\lambda) = \E_\Q[\log (1 + \lambda(Z-\mu))]$ over $\lambda \in [0,\mu^{-1}]$. This is a strictly concave function whose derivative $f'(\lambda) = \E_\Q[(Z-\mu) / (1 + \lambda(Z-\mu))]$ satisfies $f'(0) = \E_\Q[Z] - \mu > 0$ and 
$f'(\mu^{-1}) = \E_\Q[\mu - \mu^2/Z] = -\infty$. Thus there is a unique interior maximizer $\lambda^* \in (0,\mu^{-1})$, which is characterized by the first-order condition
\begin{equation} \label{eq_bdd_mean_testing_foc}
\E_\Q\left[ \frac{Z - \mu}{1 + \lambda^* (Z - \mu)} \right] = 0.
\end{equation}
Since $\Q$ is the standard uniform distribution we can be more explicit. Nothing changes if we first multiply both sides by $\lambda^*$, and then the left-hand side becomes
\[
\int_0^1 \frac{\lambda^*(z - \mu)}{1 + \lambda^* (z - \mu)} dz
= 1 -  \int_0^1 \frac{1}{1 + \lambda^* (z - \mu)} dz
= 1 - \frac{1}{\lambda^*}\log\left(\frac{1 + \lambda^* (1 - \mu)}{1 - \lambda^*\mu} \right).
\]
Thus \eqref{eq_bdd_mean_testing_foc} for $\lambda^* \in (0,\mu^{-1})$ is equivalent to
\[
    \frac{1 + \lambda^* (1 - \mu)}{1 - \lambda^*\mu} = e^{\lambda^*},
\]
which is easily solved numerically.
This leads us to the candidate numeraire
\[
X^* = 1 + \lambda^* (Z - \mu),
\]
which is strictly positive and finite. To  verify that this is indeed the numeraire, we use \eqref{eq_bdd_mean_testing_foc}
to get, for any e-variable of the form $X = 1 + \lambda (Z - \mu) = X^* + (\lambda - \lambda^*)(Z-\mu)$, that 
\begin{equation} \label{eq_ex_bdd_mean_num_check}
\E_\Q\left[ \frac{X}{X^*} \right] = 1 + (\lambda - \lambda^*) \E_\Q\left[ \frac{Z - \mu}{1 + \lambda^* (Z - \mu)} \right] = 1.
\end{equation}
Taking $\lambda = 0$ and $\lambda = 1/\mu$ we see that \eqref{T_verification_2} of Corollary~\ref{C:240207} is satisfied and, hence, that $X^*$ is the numeraire and the RIPr $\P^*$  belongs to $\Pcal$.

\begin{remark}
For the null hypothesis considered here, we conjecture that the set of all e-variables is given by
\[
\Ecal = \{X \in F_+ \colon X \le 1 + \lambda (Z - \mu) \text{ for some } \lambda \in [0,\mu^{-1}] \}.
\]
It is clear that every such $X$ is an e-variable, but it is less obvious that every e-variable is of this form. Nonetheless, equipped with this knowledge the calculation in \eqref{eq_ex_bdd_mean_num_check} would directly yield the numeraire property of $X^*$. The usefulness of Corollary~\ref{C:240207} is that a complete description of $\Ecal$ is not required; it is enough to check \eqref{eq_ex_bdd_mean_num_check} for $X=1$ and for $X$ in the generating set $\Ecal_0$.
\end{remark}

\subsection{Sub-Gaussian with nonpositive mean}

Now take $\Zcal = \R$ and consider the null hypothesis that the observation $Z$ has a 1-sub-Gaussian distribution with nonpositive mean. That is, we set
\[
\Pcal = \left\{\P \in M_1 \colon \E_\P[ e^{\lambda Z - \lambda^2/2} ] \le 1 \text{ for all } \lambda \in [0,\infty) \right\}.
\]
The `one-sided' restriction $\lambda \in [0,\infty)$ implies that $Z$ has nonpositive (potentially nonzero and even infinite) mean under any $\P \in \Pcal$. Indeed, monotone convergence and the definition of $\Pcal$ yield $\E_\P[Z] = \lim_{\lambda \downarrow 0} \E_\P[ (e^{\lambda Z} - 1)/\lambda ] \le \lim_{\lambda \downarrow 0} (e^{\lambda^2/2} - 1)/\lambda = 0$. As in the previous example, $\Pcal$ does not admit any dominating measure. It is generated by the family $\Ecal_0 = \{e^{\lambda Z - \lambda^2/2} \colon \lambda \in [0,\infty) \}$, so we will again look for a candidate numeraire and verify it using Corollary~\ref{C:240207}. We let the alternative hypothesis $\Q$ be normal with mean $\mu > 0$ and unit variance. As in the previous example, and for the same reason, we have $\Q \ll \Pcal$.

To find a candidate numeraire we maximize $\E_\Q[\log X]$ over $X \in \Ecal_0$. That is, we maximize $\E_\Q[ \lambda Z - \lambda^2/2] = \lambda \mu - \lambda^2/2$ over $\lambda \in [0,\infty)$. The maximizer is $\lambda^* = \mu$, which yields the candidate
\[
X^* = e^{\mu Z - \mu^2/2}.
\]
This is finite and strictly positive. Moreover, \eqref{T_verification_2} of Corollary~\ref{C:240207} is satisfied because
\begin{equation} \label{eq_sub_gaussian_testing_foc}
\E_\Q\left[ \frac{e^{\lambda Z - \lambda^2/2}}{e^{\mu Z - \mu^2/2}} \right] = \E_\Q\left[ e^{(\lambda - \mu) (Z - \mu) - (\lambda - \mu)^2/2} \right] = 1.
\end{equation}
We conclude that $X^*$ is the numeraire and, consequently, that the RIPr $\P^*$ is the standard normal distribution. This example can be easily generalized to $\sigma$-sub-Gaussian distributions for $\sigma \neq 1$, but we omit this for brevity.

\begin{remark}
We conjecture that the set of all e-variables is given by
\[
\Ecal = \left\{X \in F_+ \colon X \le \int_{[0,\infty)} e^{\lambda Z - \lambda^2/2} \pi(d\lambda) \text{ for some probability measure $\pi$ on $[0,\infty)$} \right\}.
\]
Although it is clear that every such $X$ is an e-variable, showing that this really includes all e-variables seems to require sophisticated functional analytic methods that are beyond the scope of this paper. Fortunately, Corollary~\ref{C:240207} sidesteps this issue. Nonetheless, taking the description of $\Ecal$ for granted the numeraire property can be checked directly. Indeed, for a general e-variable $X \le \int_{[0,\infty)} e^{\lambda Z - \lambda^2/2} \pi(d\lambda)$, where $\pi$ is a probability measure on $[0,\infty)$, Tonelli's theorem and \eqref{eq_sub_gaussian_testing_foc} give
\[
\E_\Q\left[ \frac{X}{X^*} \right] \le \int_{[0,\infty)} \E_\Q\left[ \frac{e^{\lambda Z - \lambda^2/2}}{e^{\mu Z - \mu^2/2}} \right] \pi(d\lambda) = 1.
\]
Finally, let us mention that \cite{ramdas2020admissible} prove that $e^{\lambda Z - \lambda^2/2}$ is not just an e-variable, but an admissible one for every $\lambda \geq 0$.
\end{remark}

\subsection{Symmetric distributions} \label{S_symmetric_laws}

Continue to take $\Zcal = \R$. We now consider the null hypothesis that $Z$ is symmetric,
\[
\Pcal = \left\{ \P \in M_1 \colon Z \text{ and } -Z \text{ have the same distribution under } \P \right\},
\]
which is again a non-dominated family. We also fix an alternative hypothesis $\Q$ that admits a Lebesgue density $q$. It is natural to conjecture that the RIPr is given by the symmetrization $\widetilde \P$ of $\Q$, whose density is $\widetilde p(z) = (q(z) + q(-z))/2$. However, this cannot quite be true in general because $\widetilde\P$ need not be equivalent to $\Q$. Instead, we claim that the RIPr is the measure $\P^*$ with density
\[
p^*(z) = \frac{1}{2}\left( q(z) + q(-z) \right) \bm1_{\{q(z) > 0\}}.
\]
This is the absolutely continuous part of $\widetilde\P$ with respect to $\Q$. It is a probability measure if $\Q$ has symmetric support, and otherwise a proper sub-probability measure. Note that $\widetilde\P$ belongs to $\Pcal$, which in particular shows that $\Q \ll \Pcal$ since $\Q \ll \widetilde\P$. (Alternatively, we again have that $\P(A) = 0$ for all $\P \in \Pcal$ implies that $A$ is empty, which also yields $\Q \ll \Pcal$.) 

To check that $\P^*$ is the RIPr we will show that the implied candidate numeraire is, in fact, the numeraire. It is given by
\[
X^* = \frac{d\Q}{d\P^*} = \frac{2q(Z)}{q(Z) + q(-Z)}.
\]
This expression also appears in~\citet[Section 5--6]{koning2023post}.
We claim that the set of all e-variables is
\[
\Ecal = \left\{X \in F_+ \colon X \le 1 + \phi(Z) \text{ for some odd function } \phi \right\},
\]
where we recall that a function $\phi$ is odd if $\phi(z) + \phi(-z) = 0$ for all $z \in \R$. Indeed, any $X$ of this form satisfies $\E_\P[X] \le 1 + \E_\P[\phi(Z)] = 1$, where the symmetry of $Z$ and the oddness of $\phi$ were used to get $\E_\P[\phi(Z)] = \E_\P[\phi(-Z)] = -\E_\P[\phi(Z)]$ and hence $\E_\P[\phi(Z)] = 0$. Conversely, for any e-variable $X = f(Z)$ we may write $X = \frac12 (f(Z) + f(-Z)) + \phi(Z)$ where $\phi(z) = \frac12(f(z) - f(-z))$ is the odd part of $f(z)$. For any $z \in \R$ we use the symmetric distribution $\P = \frac12 (\delta_z + \delta_{-z})$ and the fact that $X$ is an e-variable to get $\frac12 (f(z) + f(-z)) = \E_\P[f(Z)] \le 1$. Hence $X \le 1 + \phi(Z)$ as required. This is a different conclusion from the fact that every \emph{admissible and exact} e-variable is of the form $1+\phi(Z)$ for some odd function $\phi$, as shown by \cite{ramdas2020admissible}.

We can now verify the numeraire property. First, $X^*$ is $\Q$-almost surely strictly positive and finite, and it is an e-variable because it can be written as $X^* = 1 + \phi^*(Z)$ where
\[
\phi^*(z) = \frac{q(z) - q(-z)}{q(z) + q(-z)}
\]
is odd. Next, for any e-variable $X \le 1 + \phi(Z)$ where $\phi$ is odd we get
\[
\E_\Q\left[ \frac{X}{X^*} \right] \le \E_\Q\left[ (1 + \phi(Z)) \frac{q(Z) + q(-Z)}{2 q(Z)} \right]
= \E_{\widetilde\P}\left[(1 + \phi(Z)) \mathbf{1}_{\{q(Z) > 0\}} \right]
\le   \E_{\widetilde\P}[1 + \phi(Z)] = 1.
\]
This confirms the numeraire property. Finally, let us remark that it is not really necessary that $\Q$ admit a density. The symmetrization of $\Q$ is still well-defined as $\widetilde\P = \frac12 (\Q + \widetilde\Q)$ where $\widetilde \Q$ is the distribution of $-Z$ under $\Q$, or equivalently, the pushforward of $\Q$ under the reflection map $z \mapsto -z$. The RIPr is then the absolutely continuous part $\P^* = \widetilde \P^a$, and the numeraire is any nonnegative version of the Radon--Nikodym derivative $d\Q/d\P^*$ such that $d\Q/d\P^* - 1$ is odd.

\subsection{Exponential family with one-dimensional sufficient statistic}

Here $\Zcal$ can be general and is equipped with a reference measure $\mu$. The following example shows how to use the results in Section~\ref{S_ref_measure} to obtain the result in \cite[Example~4]{grunwald2020safe}. Consider an exponential family of densities
\[
p_\theta(z) = e^{\theta T(z) -  A(\theta)} 
\]
with respect to $\mu$, where $A$ is convex and differentiable, the sufficient statistic $T(z)$ is one-dimensional, and the natural parameter $\theta$ ranges in some interval $\Theta \subset \R$. The null hypothesis is $\Pcal = \{p_\theta \colon \theta \in \Theta_0\}$ for some closed subset $\Theta_0 \subset \Theta$, and the alternative is $q = p_{\theta_1}$ for some $\theta_1 \in \Theta$. We suppose that $\Theta_0$ has a smallest element $\theta^*$ and that $\theta_1 < \theta^*$. It is natural to conjecture that $p_{\theta^*}$ is the RIPr. To confirm this, it suffices to verify \eqref{eq_dual_numeraire_propoerty_mu}. Using the standard formula for the moment generating function of the sufficient statistic we get
\[
\E_\Q\left[\frac{p_\theta}{p_{\theta^*}}\right] = \E_\Q\left[e^{(\theta-\theta^*) T(Z) - A(\theta) + A(\theta^*)}\right] = e^{A(\theta_1+\theta-\theta^*) - A(\theta_1) - A(\theta) + A(\theta^*)}
\]
for all $\theta \in \Theta_0$. Since $A$ is convex, its derivative $A'$ is increasing. Together with the fundamental theorem of calculus, as well as $\theta_1 < \theta^*$ and $\theta-\theta^* \ge 0$ for $\theta \in \Theta_0$, this gives
\begin{align*}
A(\theta_1+\theta-\theta^*) - A(\theta_1)
&= \int_0^1 (\theta - \theta^*) A'(\theta_1 + t(\theta - \theta^*)) dt \\
&\le \int_0^1 (\theta - \theta^*) A'(\theta^* + t(\theta - \theta^*)) dt \\
&= A(\theta) - A(\theta^*).
\end{align*}
We conclude that $\E_\Q[p_\theta / p_{\theta^*}] \le 1$ for all $\theta \in \Theta_0$ so that \eqref{eq_dual_numeraire_propoerty_mu} holds and $p_{\theta^*}$ is indeed the RIPr. As a result the numeraire is the likelihood ratio
\[
X^* = \frac{p_{\theta_1}(Z)}{p_{\theta^*}(Z)} = e^{(\theta_1-\theta^*) T(Z) - A(\theta_1) + A(\theta^*)}.
\]
We refer to \cite{1201070} for an extensive study of information projections involving exponential families.

\begin{remark}
When $\Pcal$ is a parametric family of distributions it is often easier to first find the RIPr and then use it to find the numeraire. Here we used Theorem~\ref{T_ripr_mu} to confirm that the candidate $p_{\theta^*}$ was indeed the RIPr. We could also have minimized relative entropy and used Corollary~\ref{C_min_rel_entropy}. Since the corollary assumes that $\Pcal$ is convex, which fails here, we would have had to apply it with the convex hull of $\Pcal$ and perform the minimization over this set.
\end{remark}

\section{Beyond the logarithm: reverse R\'enyi projection} \label{S_renyi}

The duality between log-utility and relative entropy can be extended to other utilities and divergences. One case where this can be done rigorously using the results presented so far is the power utility
\[
U(x) = \frac{x^{1-\gamma}}{1-\gamma}
\]
for $\gamma > 1$. In this section we outline the resulting theory using a well-known method from mathematical finance pioneered by \cite{MR1089152} and \cite{MR1722287}. However, in contrast to that literature we work as in Sections~\ref{S_numeraire} and~\ref{S_duality} with completely arbitrary $\Pcal$ and $\Q$.

The utility function $U$ is continuous, increasing, concave, differentiable, and bounded from above by zero. Lemma~\ref{L_u_max} is therefore applicable and yields an optimal e-variable for the maximization problem
\[
\sup_{X \in \Ecal} \E_\Q[U(X)].
\]
We denote the maximizer by $X^*_\gamma$ and call it the \emph{$U$-optimal e-variable}. Observe that
\[
\E_\Q[U'(X^*_\gamma) X^*_\gamma] = \E_\Q\left[(X^*_\gamma)^{1-\gamma}\right] = (1-\gamma) \E_\Q[U(X^*_\gamma)] \le (1-\gamma) U(1) = 1,
\]
where we use the convention $U'(\infty) \infty = 0$ and where the inequality relies on the fact that the constant e-variable $X=1$ is suboptimal and that $\gamma > 1$. Thus Lemma~\ref{L_u_max} further implies the first-order condition
\begin{equation} \label{eq_foc_power_utility}
\E_\Q\left[ (X^*_\gamma)^{-\gamma} X \right] \le \E_\Q\left[ (X^*_\gamma)^{1-\gamma} \right], \quad X \in \Ecal,
\end{equation}
where $(X^*_\gamma)^{-\gamma} X$ is understood as zero on $\{X^*_\gamma = \infty\}$. This will allow us to identify an analog of the RIPr which we denote by $\P^*_\gamma$. In the fully degenerate case where $X^*_\gamma = \infty$, $\Q$-almost surely, we simply set $\P^*_\gamma = 0$. In the more interesting case where $X^*_\gamma$ is finite with positive $\Q$-probability, we let $\P^*_\gamma$ be the measure whose density with respect to $\Q$ is
\begin{equation} \label{eq_P_star_renyi}
\frac{d\P^*_\gamma}{d\Q} = \frac{(X^*_\gamma)^{-\gamma}}{\E_\Q\left[ (X^*_\gamma)^{1-\gamma} \right]}.
\end{equation}
Thanks to \eqref{eq_foc_power_utility}, $\P^*_\gamma$ belongs to $\Peff$. In analogy with \eqref{eq_strong_duality} we expect $\P^*_\gamma$ to minimize a suitable dual objective function. To discover its form, we look for a weak duality inequality and introduce the Legendre transform $V(y) = \sup_{x>0} \{U(x) - xy\}$. Evaluating the supremum shows that
\[
V(y) = - \frac{y^{1-1/\gamma}}{1-1/\gamma}.
\]
Now, for any e-variable $X$, any $\P \in \Peff$, and any $y \ge 0$, we have
\[
\begin{aligned}
\E_\Q[U(X)] &\le \E_\Q[U(X)] + y(1 - \E_\P[X]) \\
&\le \E_\Q\left[U(X) - y \frac{d\P^a}{d\Q} X \right] + y\\
&\le \E_\Q\left[V\left(y \frac{d\P^a}{d\Q}\right)\right] + y \\
&= - \frac{y^{1-1/\gamma}}{1-1/\gamma} \E_\Q\left[\left(\frac{d\P^a}{d\Q}\right)^{1-1/\gamma}\right] + y.
\end{aligned}
\]
To obtain a tight bound we minimize the right-hand side over $y$. A computation shows that the minimizer is $y = \E_\Q[(d\P^a/d\Q)^{1-1/\gamma}]^\gamma$, which is finite, possibly zero, thanks to Jensen's inequality. With this choice we get, for any e-variable $X$ and any $\P \in \Peff$, the weak duality inequality
\begin{equation*} 
\E_\Q[U(X)] \le \frac{1}{1-\gamma} \E_\Q\left[\left(\frac{d\P^a}{d\Q}\right)^{1-1/\gamma}\right]^\gamma.
\end{equation*}
Equality is achieved by taking $X = X^*_\gamma$ and $\P = \P^*_\gamma$, so we conclude that $\P^*_\gamma$ solves the minimization problem
\[
\inf_{\P \in \Peff} \frac{1}{1-\gamma} \E_\Q\left[\left(\frac{d\P^a}{d\Q}\right)^{1-1/\gamma}\right]^\gamma
\]
with optimal value equal to $\E_\Q[U(X^*_\gamma)]$. Observe that the quantity being minimized is an increasing function of the \emph{R\'enyi divergence} of order $1/\gamma$ of $\Q$ from $\P$,
\[
R_{1/\gamma}(\Q \mid \P) = \frac{1}{1/\gamma - 1} \log \E_\Q\left[\left(\frac{d\P^a}{d\Q}\right)^{1-1/\gamma}\right].
\]
Thus $\P^*_\gamma$ also minimizes the R\'enyi divergence, and as such it is known as the \emph{reverse R\'enyi projection} of order $1/\gamma$ of $\Q$ on $\Peff$. In particular, we see that the reverse R\'enyi projection always exists if the order $1/\gamma$ is in $(0,1)$. We summarize the above observations, along with a few others, in the following theorem.

\begin{theorem} \label{T_reverse_renyi}
Let $\gamma > 1$ and $U(x) = x^{1-\gamma}/(1-\gamma)$. Then
\begin{equation} \label{eq_T_reverse_renyi}
\sup_{X \in \Ecal} \E_\Q[U(X)]
= \frac{1}{1-\gamma} \exp\left( (1 - \gamma) \inf_{\P \in \Peff} R_{1/\gamma}(\Q \mid \P) \right),
\end{equation}
and the supremum and infimum are attained by $X^*_\gamma \in \Ecal$ and $\P^*_\gamma \in \Peff$ related by \eqref{eq_P_star_renyi}. Furthermore, $X^*_\gamma$ is the $\Q$-almost surely unique maximizer, and $\P^*_\gamma$ is the unique minimizer among the elements of $\Peff$ that are absolutely continuous with respect to $\Q$. Finally, $X^*_\gamma$ can be expressed in terms of $\P^*_\gamma$ by
\begin{equation} \label{eq_T_reverse_renyi_2}
X^*_\gamma = \left(\frac{d\P^*_\gamma}{d\Q}\right)^{-1/\gamma} \biggm/ \E_\Q\left[\left(\frac{d\P^*_\gamma}{d\Q}\right)^{1-1/\gamma}\right],
\end{equation}
which is understood as $+\infty$ on the set where $d\P^*_\gamma / d\Q = 0$.
\end{theorem}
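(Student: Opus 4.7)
The plan is to observe that the discussion preceding the theorem already contains most of what is required: Lemma~\ref{L_u_max} supplies the $U$-optimal e-variable $X^*_\gamma$ together with the first-order condition \eqref{eq_foc_power_utility}; the measure $\P^*_\gamma$ is then defined by \eqref{eq_P_star_renyi}, which by \eqref{eq_foc_power_utility} lies in $\Peff$; and a Legendre transform / weak duality computation yields an upper bound that is achieved at the pair $(X^*_\gamma, \P^*_\gamma)$. Together these deliver the strong duality identity \eqref{eq_T_reverse_renyi} and the existence of both optimizers. So the proof proposal is to assemble these facts, and then add three missing pieces: uniqueness of $X^*_\gamma$, uniqueness of $\P^*_\gamma$ among elements of $\Peff$ absolutely continuous with respect to $\Q$, and the inversion formula \eqref{eq_T_reverse_renyi_2}.

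For uniqueness of $X^*_\gamma$, I would exploit strict concavity of $U$ on $(0,\infty)$. If $X_1, X_2 \in \Ecal$ both attain the supremum, the midpoint $(X_1+X_2)/2$ is again in $\Ecal$ and, by strict concavity, $\E_\Q[U((X_1+X_2)/2)] > \E_\Q[U(X_1)]$ unless $X_1 = X_2$ $\Q$-a.s.\ on $\{X_1 \wedge X_2 < \infty\}$. On the complement, $\{X_1 = X_2 = \infty\}$, equality is automatic, so $X_1 = X_2$ $\Q$-almost surely. For uniqueness of $\P^*_\gamma$ among $\Q$-absolutely continuous elements of $\Peff$, I would note that minimizing $R_{1/\gamma}(\Q \mid \P)$ over such $\P$ is equivalent to maximizing $\P \mapsto \E_\Q[(d\P/d\Q)^{1-1/\gamma}]$ (since $1-\gamma < 0$ flips the sign of the logarithm). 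The map $y \mapsto y^{1-1/\gamma}$ is strictly concave on $[0,\infty)$ for $\gamma > 1$, so the functional is strictly concave in $d\P/d\Q \in L^0_+(\Q)$, and since $\Peff$ is convex, the maximizing density is $\Q$-a.s.\ unique.

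For the inversion formula \eqref{eq_T_reverse_renyi_2}, I would set $c = \E_\Q[(X^*_\gamma)^{1-\gamma}]$ and, on $\{X^*_\gamma < \infty\}$, compute directly from \eqref{eq_P_star_renyi} that $(d\P^*_\gamma/d\Q)^{-1/\gamma} = c^{1/\gamma} X^*_\gamma$. A short calculation gives
\[
\E_\Q\!\left[\left(\tfrac{d\P^*_\gamma}{d\Q}\right)^{1-1/\gamma}\right]
= c^{-(1-1/\gamma)} \E_\Q\!\left[(X^*_\gamma)^{-\gamma(1-1/\gamma)}\right]
= c^{-(1-1/\gamma)} \cdot c = c^{1/\gamma},
\]
so dividing yields $X^*_\gamma$ on $\{X^*_\gamma < \infty\}$. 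On the complementary set $\{X^*_\gamma = \infty\} = \{d\P^*_\gamma/d\Q = 0\}$, formula \eqref{eq_T_reverse_renyi_2} reads $+\infty$ by the stated convention, which matches $X^*_\gamma$.

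The main obstacle will be the careful book-keeping around the sets $\{X^*_\gamma = \infty\}$ and $\{d\P^*_\gamma/d\Q = 0\}$, and the fully degenerate case $\Q(X^*_\gamma = \infty) = 1$ where $\P^*_\gamma = 0$; there the Rényi divergence of $\Q$ from $\P^*_\gamma$ is $+\infty$, both sides of \eqref{eq_T_reverse_renyi} equal $0$, and the uniqueness statements are vacuous. Once these conventions are consistently applied, together with the fact that the weak duality argument already in the text goes through term by term for any choice of $y \ge 0$, the remaining calculations reduce to the algebra above and routine invocations of Lemma~\ref{L_u_max} and the already-established first-order condition.
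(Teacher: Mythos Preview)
Your proposal is correct and follows essentially the same approach as the paper: both rely on the discussion preceding the theorem for existence and strong duality, invoke strict concavity of $U$ (respectively of $g(\P)=\E_\Q[(d\P^a/d\Q)^{1-1/\gamma}]$) for the two uniqueness claims, and obtain \eqref{eq_T_reverse_renyi_2} by computing $\E_\Q[(d\P^*_\gamma/d\Q)^{1-1/\gamma}]=c^{1/\gamma}$ with $c=\E_\Q[(X^*_\gamma)^{1-\gamma}]$ and inverting \eqref{eq_P_star_renyi}. Your handling of the set $\{X^*_\gamma=\infty\}$ and the fully degenerate case is more explicit than the paper's, but the argument is the same.
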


\begin{proof}
The equality in \eqref{eq_T_reverse_renyi} and the form of the solutions were shown above. The $\Q$-almost sure uniqueness of $X^*_\gamma$ follows from strict concavity of the function $f(X) = \E_\Q[U(X)]$ on $\Ecal$. Since $\gamma > 1$, the R\'enyi divergence is a decreasing transformation of the function $g(\P) = \E_\Q[(d\P^a/d\Q)^{1-1/\gamma}]$, which is thus maximized by $\P^*_\gamma$. The function $g(\P)$ is strictly concave on the set of elements of $\Peff$ that are absolutely continuous with respect to $\Q$. Thus $\P^*_\gamma$ is the unique maximizer of $g(\P)$, and the unique minimizer of the R\'enyi divergence. Finally, \eqref{eq_T_reverse_renyi_2} is obtained by noting that
\[
\E_\Q\left[ (X^*_\gamma)^{1-\gamma} \right]
= \E_\Q\left[\left(\frac{d\P^*_\gamma}{d\Q}\right)^{1-1/\gamma}\right]^\gamma
\]
and then inverting \eqref{eq_P_star_renyi}.
\end{proof}

\begin{remark}
\citet[Section~V]{harremoes2023universal} mention the e-variable $(d\P^*_\gamma/d\Q)^{-1/\gamma}$ constructed using the reverse R\'enyi projection. This e-variable is strictly smaller than $X^*_\gamma$, because the denominator in \eqref{eq_T_reverse_renyi_2} is strictly smaller than one by Jensen's inequality.
\end{remark}

\begin{remark}
The method described above can be applied to more general utility functions, resulting in projections associated with more general divergences. An interesting question is whether all $f$-divergences can be covered, at least under suitable technical assumptions.
\end{remark}

\section{Universal inference is inadmissible} \label{S_comp_alt_univ_inf}

We assume that there exists a common reference measure $\mu$, and follow the notational conventions introduced before Theorem~\ref{T_ripr_mu} to identify distributions with their respective densities, written in lowercase. For a point alternative $q$ over the data $Z$, the method of universal inference~\citep{wasserman2020universal} boils down to constructing the e-variable $X^\text{UI} = q(Z)/p_\text{max}(Z)$ where $p_\text{max}(Z) = \sup_{p \in \Pcal} p(Z)$ is the maximum likelihood. (Strictly speaking, the supremum here should be understood as an essential supremum under the reference measure.) To check that this is indeed an e-variable, simply note that for any $\P \in \Pcal$ with density $p$ we have
\[
\E_\P \left[\frac{q(Z)}{p_\text{max}(Z)}\right] \leq \E_\P \left[\frac{q(Z)}{p(Z)}\right] = 1.
\]
To compare the universal inference e-variable with the numeraire $X^* = q(Z)/p^*(Z)$, we first claim that the RIPr satisfies $p^*(Z) \le p_\text{max}(Z)$ up to $\mu$-nullsets. This is obvious if the RIPr belongs to $\Pcal$, but in general it only belongs to $\Peff$. In this case the claim follows by applying the following lemma with $f = p_\text{max}$.

\begin{lemma}
Let $f$ be a random variable such that $f \ge p$, $\mu$-almost surely, for all $p \in \Pcal$. Then $f \ge p$, $\mu$-almost surely, for all $p \in \Peff$.
\end{lemma}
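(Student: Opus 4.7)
The plan is to invoke Lemma~\ref{L_P_bipolar_char_ref_measure}, which characterizes $\Peff$ (identified with densities) as the smallest $\mu$-closed convex solid set containing the densities of $\Pcal$. The idea is to exhibit a $\mu$-closed convex solid set containing $\Pcal$ that is automatically majorized by $f$, then invoke minimality.

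Concretely, I would define
\[
C_f = \{g \in L^0_+(\mu) \colon g \le f \text{ $\mu$-almost surely}\}
\]
and check the four properties: (i) $C_f$ contains every $p \in \Pcal$ by hypothesis; (ii) convexity is immediate, since if $g_1, g_2 \le f$ $\mu$-a.s.\ and $\lambda \in [0,1]$, then $\lambda g_1 + (1-\lambda) g_2 \le f$ $\mu$-a.s.; (iii) solidity is immediate, since $0 \le g' \le g \le f$ gives $g' \le f$; (iv) $\mu$-closedness follows by the standard trick of passing to a $\mu$-almost surely convergent subsequence of any $g_n \to g$ in $\mu$-probability with $g_n \le f$ $\mu$-a.s., and taking the limit pointwise.

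By Lemma~\ref{L_P_bipolar_char_ref_measure}, the (set of densities of the) effective null $\Peff$ is the smallest $\mu$-closed convex solid set containing the densities of $\Pcal$, hence $\Peff \subset C_f$. This yields $p \le f$ $\mu$-almost surely for every $p \in \Peff$, as claimed.

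I do not foresee a real obstacle here; the only subtlety is to note that $\Peff$ may contain sub-probabilities that are not in $\Pcal$, which is exactly what solidity accommodates, and that $\mu$-closedness needs the passage-to-subsequence step because $\mu$-convergence is convergence in probability rather than almost-sure convergence.
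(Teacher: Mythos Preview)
Your proof is correct and uses the same key ingredient as the paper, namely the minimality characterization of $\Peff$ from Lemma~\ref{L_P_bipolar_char_ref_measure}. The paper instead works from the inside out, showing that the set $\Pcal''$ (the $\mu$-closure of all sub-densities of elements of $\conv(\Pcal)$, already shown to be closed, convex, and solid in the proof of Theorem~\ref{T_ripr_mu}) is dominated by $f$ and contains $\Peff$; your direct verification that the down-set $C_f$ is $\mu$-closed, convex, and solid is a slightly cleaner packaging of the same idea.
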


\begin{proof}
It is clear that $f \ge p$, $\mu$-almost surely, for all sub-probability densities $p$ dominated by some element of the convex hull of $\Pcal$, and then also for all $p$ in the $\mu$-probability closure of this set. This was called $\Pcal''$ in the proof of Theorem~\ref{T_ripr_mu}, where it was shown that $\Pcal''$ is closed, convex, and solid. By Lemma~\ref{L_P_bipolar_char_ref_measure}, $\Pcal''$ contains $\Peff$, completing the proof.
\end{proof}

We thus see that $p^*(Z) \le p_\text{max}(Z)$, and hence $X^* \ge X^\text{UI}$, up to $\mu$-nullsets. In nondegenerate situations involving a composite null hypothesis, the inequality will be strict with positive $\mu$-probability. Thus, in such cases, the relatively general method of universal inference is in fact inadmissible. We end by noting that our numeraire imposes weaker assumptions than the universal inference work, which needs a reference measure to define likelihoods.

\section{Summary} \label{S_summary}

We established that under no assumptions on the composite null $\Pcal$ and point alternative $\Q$, there exists a unique special e-variable $X^*$, the numeraire, which satisfies $\E_\Q[X/X^*]\leq 1$ for every other e-variable $X$, and is also log-optimal. Further, $X^*$ can be written as a likelihood ratio $d\Q/d\P^*$ for a unique (sub-)probability distribution $\P^*$, absolutely continuous with respect to $\Q$ and belonging to the effective null hypothesis $\Peff$ associated with $\Pcal$. The numeraire is the only e-variable of this form. A strong duality theory establishes that  $\E_\Q[\log X^*]$ equals the minimum relative entropy of $\Q$ to $\Peff$, achieved by $\P^*$. This fully generalizes (to composite nulls) the fact that the likelihood ratio is the log-optimal e-variable for point nulls. We gave several sufficient conditions to identify and certify the numeraire, and showed that these were easy to verify in several nonparametric examples without a reference measure that were out of the reach of earlier methods. We also showed how to generalize our theory beyond log-optimality. 

One natural direction for future work is to extend our theory to composite alternatives, using the \emph{method of mixtures} or the \emph{plug-in method}. A second direction is to extend our theory to the sequential setting using nonnegative supermartingales and \emph{e-processes} (sequential generalizations of e-variables). See~\cite{ramdas2023game} for an elaboration of the italicized terms and existing work. The second topic introduces additional subtleties through dealing with filtrations and stopping times, a direction we are currently pursuing.

\subsection*{Acknowledgments} We thank the  referees for insightful comments and helpful suggestions.

\bibliography{bibliography}
\bibliographystyle{plainnat}

\appendix

\section{A generalized Lebesgue decomposition} \label{app_gen_lebesgue}

The following generalized Lebesgue decomposition into a `regular' and `singular' part is used in the proof of Lemma~\ref{L_u_max}. As always, we work on a given measurable space $(\Omega,\Fcal)$. Given a nonempty subset $\Pcal \subset M_+$, an event $N \in \Fcal$ is called \emph{$\Pcal$-negligible} if $\P(N) = 0$ for all $\P \in \Pcal$.

\begin{lemma} \label{L_gen_Lebesgue}
For any $\Q \in M_+$ and nonempty $\Pcal \subset M_+$ there is a unique decomposition $\Q=\Q^r + \Q^s$ with $\Q^r,\Q^s \in M_+$ such that $\Q^r \ll \Pcal$ and $\Q^r,\Q^s$ are singular, meaning that there exists a $\Pcal$-negligible event $N \in \Fcal$ such that $\Q^s(N^c) = 0$. The event $N$ is essentially unique, i.e., unique up to $\Q$- and $\P$-nullsets for all $\P \in \Pcal$. In particular, any other $\Pcal$-negligible event $N'$ satisfies $\Q(N' \setminus N) = 0$.
\end{lemma}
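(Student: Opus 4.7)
The plan is to construct $N$ as a maximal $\Pcal$-negligible event with respect to $\Q$-measure, and then derive every other claim from this single maximality property. I would first reduce to finite $\Q$: if $\Q$ is $\sigma$-finite (in particular a probability measure, which is the only case used elsewhere in the paper), replacing $\Q$ by an equivalent finite measure leaves both the class of $\Pcal$-negligible events and the relation $\Q^r \ll \Pcal$ unchanged. So assume $\Q(\Omega) < \infty$, set
\[
\alpha = \sup\{\Q(B) : B \in \Fcal \text{ is } \Pcal\text{-negligible}\} \le \Q(\Omega),
\]
pick $B_n$ with $\Q(B_n) \to \alpha$, and define $N = \bigcup_n B_n$. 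Because the class of $\Pcal$-negligible sets is a $\sigma$-ideal, $N$ is itself $\Pcal$-negligible, and monotonicity forces $\Q(N) = \alpha$.

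With $N$ in hand I would set $\Q^s = \Q(\,\cdot \cap N)$ and $\Q^r = \Q(\,\cdot \cap N^c)$. The decomposition $\Q = \Q^r + \Q^s$ and the singularity $\Q^s(N^c) = 0$ are then immediate, so the only nontrivial point is $\Q^r \ll \Pcal$. For any $\Pcal$-negligible $N'$ the union $N \cup N'$ is still $\Pcal$-negligible, hence $\Q(N \cup N') \le \alpha = \Q(N)$, which forces $\Q(N' \setminus N) = 0$. This simultaneously establishes the ``in particular'' clause and yields $\Q^r(N') = \Q(N' \cap N^c) = 0$.

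For uniqueness, let $\Q = \widetilde\Q^r + \widetilde\Q^s$ be any second decomposition of the stated form, with $\widetilde\Q^s$ supported on a $\Pcal$-negligible event $\widetilde N$. The ``in particular'' clause applied to $\widetilde N$ gives $\Q(\widetilde N \setminus N) = 0$. The symmetric identity $\Q(N \setminus \widetilde N) = 0$ follows from
\[
\Q(N) = \widetilde\Q^s(N) = \widetilde\Q^s(N \cap \widetilde N) = \Q(N \cap \widetilde N),
\]
where the outer equalities use $\widetilde\Q^r(N) = \widetilde\Q^r(N \cap \widetilde N) = 0$ (which follow from $\widetilde\Q^r \ll \Pcal$) and the middle one uses that $\widetilde\Q^s$ is supported on $\widetilde N$. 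Running the same computation with a generic $A$ in place of $\Omega$ gives $\widetilde\Q^s(A) = \Q(A \cap \widetilde N) = \Q(A \cap N) = \Q^s(A)$, so $\Q^s = \widetilde\Q^s$ and $\Q^r = \widetilde\Q^r$. Since $\Q(N \triangle \widetilde N) = 0$ and $\P(N) = \P(\widetilde N) = 0$ for every $\P \in \Pcal$, the set $N$ is essentially unique in the stated sense.

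The one place I expect to need real care is the reduction to $\sigma$-finite $\Q$. If $\Q$ fails to be $\sigma$-finite the supremum $\alpha$ can be infinite and the maximality argument above breaks; a Zorn's-lemma argument on the $\sigma$-ideal of $\Pcal$-negligible sets would patch this, but the issue does not arise in the paper's applications.
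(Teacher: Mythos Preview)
Your proof is correct and follows essentially the same approach as the paper: construct $N$ as a countable union along a $\Q$-maximizing sequence of $\Pcal$-negligible sets, define $\Q^r,\Q^s$ as the restrictions to $N^c$ and $N$, and deduce everything from maximality. The only differences are organizational (you prove the ``in particular'' clause first and use it in the uniqueness argument, whereas the paper does the reverse) and your explicit reduction to finite $\Q$; the paper omits this reduction, implicitly treating $\alpha$ as finite, which is harmless since in every application $\Q$ is a probability measure.
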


\begin{proof}
Define $\alpha = \sup_{N \in \Ncal} \Q(N)$ where $\Ncal = \{N \in \Fcal \colon \P(N) = 0 \text{ for all } \P \in \Pcal\}$ is the family of all $\Pcal$-negligible sets. Note that $\Ncal$ is closed under countable unions. Pick a maximizing sequence $N_n$ and set $N = \bigcup_n N_n$. Then $\alpha \ge \Q(N) \ge \Q(N_n) \to \alpha$, so $\Q(N) = \alpha$. Now define $\Q^s(A) = \Q(A \cap N)$ and $\Q^r(A) = \Q(A \cap N^c)$. To see that $\Q^r \ll \Pcal$, suppose this is not the case. Then there is some $N' \in \Ncal$ with $\Q^r(N') > 0$, and hence $\Q(N \cup N') \ge \Q^s(N) + \Q^r(N') > \alpha$, a contradiction. To show uniqueness, suppose we have two tuples $\Q^r_i, \Q^s_i, N_i$ for $i=1,2$ with the stated properties. Set $N = N_1 \cup N_2 \in \Ncal$ and note that $\Q^s_i(N^c) \le \Q^s_i(N_i^c) = 0$. Then for any $A$, we have $\Q(A \cap N) = \Q^s_i(A \cap N) = \Q^s_i(A)$. Thus $\Q^s_1 = \Q^s_2$, and then also $\Q^r_1 = \Q^r_2$. Furthermore, the symmetric difference $N_1 \Delta N_2$ is a $\Q$-nullset because $\Q(N_1 \cap N_2^c) = \Q_2^r(N_1) = 0$ and similarly $\Q(N_2 \cap N_1^c) = 0$. Since also $N_1 \Delta N_2$ belongs to $\Ncal$, the essential uniqueness statement follows.
Finally, if $N' \in \Ncal$ then $\Q(N' \setminus N) = \Q^r(N' \setminus N) + \Q^s(N' \setminus N) \le \Q^r(N') + \Q^s(N^c) = 0$.
\end{proof}

\begin{landscape}

\section{Overview of finiteness properties} \label{app_finiteness}

The table below gives an overview of the relation between absolute continuity properties and finiteness of the numeraire and relative entropies. Each cell shows the possibilities for the property or quantity indicated in the column header, under the assumption indicated for that row. In the table, $X^*$ is the numeraire, $\P^*$ is the RIPr, the relative entropy $H(\Q \mid \P^*)$ is in \eqref{eq_rel_ent_alt}, and the maximal description gain $H(\Q \mid \P^* \leadsto \Peff)$ is defined in Remark~\ref{R_description_gain}. We specifically highlight the possibility that $H(\Q \mid \P)$ may be infinite for all $\P \in \Pcal$, but finite or even zero for the RIPr $\P^* \in \Peff$; this happens, for instance, in Examples~\ref{ex_P_bipolar} and \ref{ex_cauchy_mixture_of_gaussians}. Furthermore, although $H(\Q \mid \P^* \leadsto \Peff) = \infty$ if $\Q \not\ll \Pcal$, the modified quantity in Theorem~\ref{T_duality_general}\ref{T_duality_general_3} will be equal to zero for $\P = \P^*$.

\bigskip

\setlength{\extrarowheight}{3pt} 

\centering
\begin{tabularx}{1.29\textwidth}{|p{5.5cm}|p{3cm}|p{1.4cm}|p{1.4cm}|p{2cm}|p{3cm}X|}
\hline
& \centering $\Q(X^* < \infty) = 1$ & \centering $\P^* \ll \Q$ & \centering $\Q \ll \P^*$ & \centering $H(\Q \mid \P^*)$ & \centering $H(\Q \mid \P^* \leadsto \Peff)$ & \\[3pt] \hline
\centering \vspace{0pt} $\Q \not\ll \Pcal$ \\
    & \centering \vspace{0pt} no & \centering \vspace{0pt} yes & \centering \vspace{0pt} no & \centering \vspace{0pt} $=\infty$ & \centering \vspace{0pt} $=\infty$ & \\ \hline
\centering \vspace{0pt} $\Q \ll \Pcal$ \\
    & \centering \vspace{0pt} yes & \centering \vspace{0pt} yes & \centering \vspace{0pt} yes & \centering \vspace{0pt} $\le \infty$ & \centering \vspace{0pt} $=0$ & \\ \hline
\centering \vspace{0pt} $\Q \ll \Pcal$ \\[1ex] $H(\Q \mid \P) < \infty$ for some $\P \in \Pcal$ \\
    & \centering \vspace{0pt} yes & \centering \vspace{0pt} yes & \centering \vspace{0pt} yes & \centering \vspace{0pt} $< \infty$ & \centering \vspace{0pt} $=0$ & \\ \hline
\centering \vspace{0pt} $\Q \ll \Pcal$ \\[1ex] $H(\Q \mid \P) = \infty$ for all $\P \in \Pcal$ \\
    & \centering \vspace{0pt} yes & \centering \vspace{0pt} yes & \centering \vspace{0pt} yes & \centering \vspace{0pt} $\le \infty$ & \centering \vspace{0pt} $=0$ & \\ \hline
\centering \vspace{0pt} $\Q \ll \Pcal \ll \mu$ for some $\mu$ \\[1ex] $H(\Q \mid \P) = \infty$ for all $\P \in \Pcal$ \\[1ex] $\Pcal$ is convex \\
    & \centering \vspace{0pt} yes & \centering \vspace{0pt} yes & \centering \vspace{0pt} yes & \centering \vspace{0pt} $\le\infty$ & \centering \vspace{0pt} $=0$ & \\ \hline
\end{tabularx}

\end{landscape}

\end{document}